\documentclass[12pt,reqno]{amsart}
\DeclareFontFamily{OML}{script}{}
\DeclareFontShape{OML}{script}{m}{it}
{ <5-20> rsfs10 }{}
\DeclareMathAlphabet{\mathscript}{OML}{script}{m}{it}

\renewcommand{\mathcal}[1]{{\mathscript #1}\hspace{0.2ex}}
\usepackage{cite}
\usepackage{color}
\ifx\red\undefined
\newcommand{\red}{\color{red}}

\fi
\usepackage[ansinew]{inputenc}
\usepackage[encapsulated]{CJK}

\usepackage{graphicx,graphics}
\usepackage{cite}
\usepackage{pifont}
\usepackage{amsthm}
\usepackage{subfigure}
\usepackage{amscd}
\usepackage{amsmath}
\usepackage{latexsym}
\usepackage{amsfonts}
\usepackage{amssymb}
\usepackage{color}
\usepackage{multicol}
\usepackage{amsmath,amssymb,amsthm,amsfonts,mathrsfs}
\usepackage{hyperref}
\hypersetup{hypertex=true,
            colorlinks=true,
            linkcolor=blue,
            anchorcolor=blue,
            citecolor=blue}
\usepackage{bm}

\usepackage{tikz}
\usepackage{pgfplots}
\usetikzlibrary{patterns}
\usepackage{enumitem}
\makeatletter
\renewcommand{\l@subsection}{\@tocline{2}{0pt}{2.8em}{}{}}%
\renewcommand{\l@subsubsection}{\@tocline{3}{0pt}{5.4em}{}{}}%
\makeatother

\definecolor{ocre}{RGB}{64,123,121}

\allowdisplaybreaks[4]
\ifx\text\undefined
\newcommand{\text}{\mbox}
\fi
\ifx\operatorname\undefined
\newcommand{\operatorname}{\mathop}
\fi

\allowdisplaybreaks[3]

\newtheorem{theorem}{Theorem}[section]
\newtheorem{lemma}[theorem]{Lemma}
\newtheorem{proposition}[theorem]{Proposition}

\newtheorem{corollary}[theorem]{Corollary}

\theoremstyle{remark}

\def\R{\mathbb{R}}

\numberwithin{equation}{section}

\begin{document}
\title{Gradient estimates and Liouville-type theorems for the semilinear elliptic equation involving the nonlinear gradient source}
\author{Wenguo Liang and Zhengce Zhang}
\date{\today}
\address[Wenguo Liang]{School of Mathematics and Statistics, Xi'an Jiaotong University,
Xi'an, 710049, P. R. China}
\email{liangwenguo@stu.xjtu.edu.cn}
\address[Zhengce Zhang]{School of Mathematics and Statistics, Xi'an Jiaotong University,
Xi'an, 710049, P. R. China}
\email{zhangzc@mail.xjtu.edu.cn}
\thanks{Corresponding author: Zhengce Zhang}
\thanks{Keywords: Liouville-type theorem; Local gradient estimates; Priori estimates; Differential inequalities}
\thanks{2020 Mathematics Subject Classification: 35A01; 35B45; 35B50; 35J61}
\thanks{ }

\begin{abstract}
We study local and global properties of positive solutions to the equation $-\Delta u=u^p+M|\nabla u|^q$ in a domain $\Omega$ of $\mathbb R^N$, where $p,q$ are parameters and $M>0$. By constructing a linear operator, we establish the differential inequality containing an auxiliary function. By selecting appropriate auxiliary functions over various regions and employing the maximum principle, we derive the local gradient estimates for all $(p,q)\in \mathbb R^2$, and further establish Liouville-type theorems. As an application, we acquire universal estimates for local solutions of elliptic equations with general nonlinearities.  Our results extend partial conclusions established in Bidaut-V\'{e}ron, Garcia-Huidobro and V\'{e}ron [Math. Ann. 378 (1-2) (2020) 13-56]. \\
\end{abstract}

\maketitle

{\tableofcontents}  

\section{Introduction}
The article is concerned with local and global properties of positive solutions to the elliptic equation
\begin{align}\label{eq1}
-\Delta u=u^p+M|\nabla u|^q
\end{align}
in $\Omega$, where $\Omega\subset\R^N$ is a domain with $N\geq 2$, $M>0$, and $p,q$ are parameters. For $A\geq 0$, let
\begin{equation*}
\Omega_A:=\{x\in\Omega;\,|\nabla u(x)|=A\}
\end{equation*}
be the level set of gradient. The term $|\nabla u|^q$ is singular with $q<0$ on $\Omega_0$. A function $u\in C^1(\Omega)$ is said to be a weak solution of \eqref{eq1} if
\begin{equation*}
  |\nabla u|^q\in L^1_{\rm loc}(\Omega),
\end{equation*}
and
\begin{equation*}
  \int_{\Omega}\langle \nabla u,\nabla\psi\rangle=\int_{\Omega}u^p\psi+M\int_{\Omega}|\nabla u|^q\psi\quad\text{for all}\ \psi\in C^{\infty}_0(\Omega).
\end{equation*}

When $M=0$, equation \eqref{eq1} reduces to the classical Lane--Emden equation
\begin{equation}\label{Lane-Emden}
  -\Delta u=u^p.
\end{equation}
Gidas and Spruck \cite{Gidas-Spruck} used integral estimates to establish the nonexistence of nontrivial solutions in $\mathbb R^N$ for $1<p<p_S:=(N+2)/(N-2)$ with $N>2$. Bidaut--V\'{e}ron \cite{Veron-ARMA-89} proved an analogous nonexistence result in an exterior domain if $1<p<N/(N-2)$. Later, in an arbitrary domain $\Omega\subset \mathbb R^N$, Dancer \cite{Dancer-Math.Z-98} presented the priori estimates of solutions in terms of the distance to the boundary. Serrin and Zou \cite{Serrin-Zou-Acta} observed that the nonexistence of nontrivial solutions in $\mathbb R^N$ can be seen as a limiting case of universal boundedness theorems. Pol\'{a}\v{c}ik, Quittner and Souplet \cite{Polacik-Quittner-Souplet} used scaling arguments to establish new connections between universal estimates and Liouville-type theorems. While Wang and Wei \cite{Wang-Wei-JDE-23} applied the Nash--Moser iteration method to prove the nonexistence positive solutions to  \eqref{Lane-Emden} over a complete Riemannian manifold, where the exponent $p$ is permitted to be negative.

For problems whose nonlinearity contains only a gradient term, the equation reduces to the well-known Hamilton--Jacobi equation or Riccati equation
\begin{equation}\label{Hami-Joc}
-\Delta u=M|\nabla u|^q.
\end{equation}
When $M>0$, Lions \cite{Lions-JAM-85} used the Bernstein technique to derive the local gradient estimates, and proved that any classical solution in $\mathbb R^N$ with $q>1$ must be a constant. When $M<0$,  the pointwise gradient estimates was established by Bidaut-V\'{e}ron, Garcia-Huidobro and V\'{e}ron \cite{Verom-Huidobro-VeronL-JFA} based upon a combination of the Bernstein method and the Keller--Osserman's estimates for $q>1$. While for the half-space case, Porrentta and V\'{e}ron \cite{Porretta-Verom-Adv-Stu-06} obtained the Liouville-type classification for $1<q\leq 2$. Filippucci, Pucci and Souplet proved an analogous symmetry result for $q>2$ and $M>0$ in \cite{Filippucci-Pucci-Souplet-CPDE}, based on the moving planes method together with Bernstein estimates and compactness arguments. For extra qualitative properties and priori estimates of equations  related to \eqref{Hami-Joc}, we refer readers to \cite{Aghajani-CVPDE-21, blow-up1,Chang-Hu-Zhang-JDE-2023,Chang-Ju-Zhang-DCDS-2020,Ferreira-JAM-16, Lions-Math-Ann,Tommaso-Porretta-ARMA}.

Equation \eqref{eq1} with $M<0$ proposed in \cite{Chipot-Weissler-SIAM} is related to the  Chipot--Weissler equation. This equation models  population dynamics, describing how the population evolves under a given natural regulatory mechanism. The  key interest of \eqref{eq1} lies in the presence of two nonlinearities acting in opposite directions.
By using the Bernstein method, Bidaut-V\'{e}ron, Garcia-Huidobro and V\'{e}ron \cite{Veron-MathAnn-20} obtained the priori estimates and nonexistence of solutions in the range $\min\{p,q\}>1$ and $M\in\mathbb R$. Recently, He and Wang \cite{He-Wang-arXive-2024} adopted the Nash--Moser iteration to derive local estimates and Liouville-type theorems for $\max\{p,q\}<(N+3)/(N-1)$. For the case $M>0$, by using the integral identity and Young's inequality, Ma, Wu and Zhang \cite{Ma-Wu-arXive-24} have recently completed the nonexistence results of solutions in the critical case, that is $q=2p/(p+1)$. Further results on gradient estimates and the nonexistence of solutions to equations associated with \eqref{eq1} are available in \cite{Veron-CVPDE-23,Chang-Zhang-NA-2026,Han-Wang-JFA,He-Wang-arXive-2024,Liang-Zhang-CVPDE}.

The paper is devoted to the gradient estimates and Liouville-type theorems for equation \eqref{eq1} with $M>0$. Before proving the main results, we first introduce several preliminary notes.  Let $u$ be a solution of \eqref{eq1}. We define
\begin{equation*}
  u_\lambda(x)=\lambda^{\frac2{p-1}}u(\lambda x),\quad\lambda>0.
\end{equation*}
Clearly, $u_\lambda$ is a solution of the equation
\begin{equation*}
  -\Delta u=u^p+M\lambda^{\frac{2p-q(p+1)}{p-1}}|\nabla u|^q.
\end{equation*}
Then equation \eqref{eq1} is scale invariant for any $\lambda>0$ if and only if $q=2p/(p+1)$. In this context, we refer to $q$ the critical exponent with respect to $p$. Correspondingly,  $q$ is called subcritical if $q<2p/(p+1)$, and supercritical if $q>2p/(p+1)$.

The primary objective of the present paper are twofold. Firstly, we aim to establish Liouville-type theorems of \eqref{eq1} for all $(p,q)\in \mathbb R^2$. Secondly, as applications, we intend to obtain the universal estimates of positive solutions to \eqref{eq1}.

Compared with previous results obtained in \cite{Veron-MathAnn-20} with $\min\{p,q\}>1$, and in \cite{He-Wang-arXive-2024} with $\max\{p,q\}<(N+3)/(N-1)$, the gradient estimates of solutions to \eqref{eq1} for $(p,q)\in\mathbb R^2$ is more complicated and thus more delicate analytical technique are required. Actually, on the one hand, the key Keller--Osserman type estimates  \cite[Theorem A]{Veron-MathAnn-20} requires $q>1$. On the other hand, the auxiliary function proposed in \cite{He-Wang-arXive-2024,Q-W-J-CVPDE-2026} does not work to achieve the desired estimates. In our setting, by constructing a linear differential operator, together with introducing parameters, we establish differential inequalities on the superlevel sets of gradients via choosing auxiliary functions, and further obtain Liouville-type theorems. Furthermore, we derive the universal estimates through rescaling and doubling arguments to elliptic equations with general nonlinearities.

To better illustrate the main contributions and ideas of this paper, we shall elaborate on these methods in detail as follows.
\begin{enumerate}[
    label=(\roman*),
    leftmargin=*,
    labelsep=0.5em,        
]
\item \textit{Construction of a linear differential operator.} The Bernstein method depends on a differential inequality associated with $w$, where $w=|\nabla v|^2$, $v=f^{-1}(-u)$ and $f$ is a suitable auxiliary function. For linear operator $\mathcal L:=-\Delta +\mathcal H\cdot\nabla $ with $\mathcal H\in\mathbb R^N$, we observe that
\begin{equation*}
  \mathcal L(u^\alpha w^\gamma)=w^\gamma\mathcal L(u^\alpha)+u^\alpha \mathcal L(w^\gamma)-2\nabla u^\alpha\cdot\nabla w^\gamma.
\end{equation*}
It follows from \eqref{eq1} that
\begin{equation*}
  -\Delta u^\alpha=-\alpha(\alpha-1)u^{\alpha-2}|\nabla u|^2-\alpha u^{\alpha-1}\Delta u<0
\end{equation*}
with $\alpha<0$, and
\begin{equation*}
  -\Delta w^\gamma=-\gamma(\gamma-1)w^{\gamma-2}|\nabla w|^2-\gamma w^{\gamma-1}\Delta  w.
\end{equation*}
Thus, the term $-w^\gamma \Delta u^\alpha$ arising from $w^\gamma\mathcal L(u^\alpha)$ and the non-positive term $-\gamma(\gamma-1)|\nabla w|^2$ with $\gamma>1$ contribute to deriving $\mathcal L(u^\alpha w^\gamma)<0$ for $w>0$. Nevertheless, we still need to handle additional terms $w^\gamma\mathcal H\cdot\nabla u^\alpha$, $u^\alpha\mathcal H\cdot\nabla w^\gamma$ and $-2\nabla u^\alpha\cdot\nabla w^\gamma$. To this end, we rewrite $w^\gamma\mathcal H\cdot\nabla u^\alpha$ and $u^\alpha \mathcal H\cdot\nabla w^\gamma$ as $\mathcal H_\alpha\cdot \nabla (u^\alpha w^\gamma)$. Therefore, the operator $\mathcal L$ can be replaced by
\begin{equation}\label{L-alpha-Intru}
  \mathcal L_\alpha(z):=-\Delta z+\mathcal H_\alpha \cdot\nabla z
\end{equation}
with
\begin{equation}\label{H-alpha-Intru}
 \mathcal H_\alpha:=\mathcal H+2\alpha \frac{f'}f\nabla v= \left(qM\frac{|f'|^q}{f'}w^{\frac{q-2}2}-2\frac{f''}{f'}+2\alpha \frac{f'}f\right)\nabla v.
\end{equation}

\item \textit{The subsolution of $\mathcal L_\alpha(z)=0$ on the superlevel set of gradient.} The local pointwise gradient estimates relies on the inequality
    \begin{equation*}
      \mathcal L_\alpha(u^\alpha w^\gamma\eta)\leq -(u^\alpha w^\gamma\eta)^\theta+C(\eta),
    \end{equation*}
where $\eta$ is a cut-off function and $\theta,C(\eta)>0$. However, for given $x_0\in\Omega$, it seems that this inequality does not hold in $\{x\in B_{3R/4};\, (u^\alpha w^\gamma\eta)(x)>0\}$ for $\alpha<0$, where $B_R=B_R(x_0)$ is the open ball with center $x_0$ and radius $R={\rm dist}(x_0,\partial\Omega)$. Fortunately, according to the interaction among $u^p$, $M|\nabla u|^q$ and $\Delta u$,  it holds that \begin{equation*}
  \mathcal L_\alpha (u^\alpha w^\gamma\eta)<0 \quad \text{in}\ \{x\in B_{3R/4};\, (u^\alpha w^\gamma\eta)(x)>C(\eta)\}
\end{equation*}
when $u$ admits a lower or upper bound with respect to $M$.  By virtue of the maximum principle on the superlevel set of $u^\alpha w^\gamma\eta$, we obtain the estimates for $u^\alpha w^\gamma$ in $\Omega$. The idea of working on the superlevel set of $u^\alpha w^\gamma\eta$ is inspired by the argument proposed by Cirant and Goffi \cite{Girant-Goffi-ARMA-2021}; see also \cite{Cianchi-CPDE-2011,Grenon-CRMA-2006} and the references therein.
\end{enumerate}

In addition, we establish local gradient estimates and Liouville-type result of bounded solutions for $q=2p/(p+1)$ with $p>1$. Noting that equation \eqref{eq1} is scale invariant, we therefore derive universal estimates for positive solutions by means of rescaling and doubling methods. These estimates guarantee the validity of Liouville-type results for positive solutions without the boundedness assumption.

Now we summarise our main results. Let $(p,q)\in\mathbb \R^2=\cup_{i=1}^6 G_i$, where sets $G_i(i=1,2,\ldots,6)$ in Figure \ref{fig:parameter_space} are defined as follows:
\begin{align*}
  G_1=&\bigg\{(p,q);\,p\geq \frac{N+3}{N-1},\ q<\frac{2p}{p+1}\bigg\};\quad &&G_2=\bigg\{(p,q);\, p\geq\frac{N+3}{N-1},\ q>\frac{2p}{p+1}\bigg\};\\
G_3=&\bigg\{(p,q);\,p<\frac{N+3}{N-1},\ q<\frac{N+1}{N-1}\bigg\};\quad &&G_4=\bigg\{(p,q);\,p\leq 0,\ q\geq \frac{N+1}{N-1}\bigg\};\\
G_5=&\bigg\{(p,q);\,0<p<\frac{N+3}{N-1},\ q\geq \frac{N+1}{N-1}\bigg\};\quad &&G_6=\bigg\{(p,q);\, p>1,\ q=\frac{2p}{p+1}\bigg\}.
\end{align*}
The first result concerns the local pointwise gradient estimates for solutions to \eqref{eq1} in the case that $q$ is subcritical with respect to $p\geq (N+3)/(N-1)$.

\begin{theorem}\label{theorem:p>N+3,q<1}
Let $(p,q)\in G_1$. Assume that $u$ is a positive solution of \eqref{eq1} in $\Omega$ satisfying
\begin{equation}\label{u<M1*}
  u\leq c_{N,p,q}M^{\frac{2}{2p-(p+1)q}}
\end{equation}
for some $c_{N,p,q}>0$. Then for $\max\{-(3N)^{-1},-4(3-q)^{-1}\} <\alpha<0$, there exists a constant $C=C(N,M,p,q,\alpha)>0$ such that
\begin{equation*}
  \bigl|\nabla u^{\frac{\alpha+2}2}(x)\bigl|\leq C \left({\rm dist}^{-1}(x,\partial\Omega)+{\rm dist}^{-\frac1{3-q}}(x,\partial\Omega)\right),\quad x\in\Omega.
\end{equation*}
Consequently, \eqref{eq1} possesses no positive solution in $\mathbb R^N$ satisfying \eqref{u<M1*}.
\end{theorem}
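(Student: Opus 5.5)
We would follow the Bernstein-type scheme outlined in the introduction. Take the auxiliary function to be a power, so that $v$ is essentially $u^{(\alpha+2)/2}$ up to a constant, and set $w=|\nabla v|^2$; this is the quantity estimated in the statement. Equivalently, we work with $z=u^\alpha w^\gamma$, where $w$ is $|\nabla u|^2$ suitably weighted, and choose $\gamma>1$ so that $u^\alpha w^\gamma$ is a power of $|\nabla u^{(\alpha+2)/2}|^2$.

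\textbf{Step 1: the differential inequality for $z$.} We first compute $\mathcal L_\alpha z$, with $\mathcal L_\alpha$ and $\mathcal H_\alpha$ as in \eqref{L-alpha-Intru}--\eqref{H-alpha-Intru}. The term $\Delta w$ is handled by Bochner's formula $\tfrac12\Delta|\nabla u|^2=|D^2u|^2+\nabla u\cdot\nabla\Delta u$, together with the refined inequality $|D^2u|^2\ge \frac{(\Delta u)^2}{N}$ plus the radial-direction improvement. We substitute $\Delta u=-u^p-M|\nabla u|^q$ and differentiate the equation. The gradient of the equation contributes $pu^{p-1}|\nabla u|^2$ and $qM|\nabla u|^{q-2}\nabla u\cdot D^2u\nabla u$; the latter is absorbed into the drift $\mathcal H$. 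The goal is an inequality of the form
\[
\mathcal L_\alpha z\le -c\,u^{\alpha-2}w^{\gamma+1}\bigl(1-\text{error}\bigr)+(\text{terms from }u^{2p},\,u^{p}|\nabla u|^q,\,M^2|\nabla u|^{2q}).
\]
The restrictions $-(3N)^{-1}<\alpha<0$ and $\alpha>-4/(3-q)$ should be exactly what makes the quadratic form in $(\Delta u, u^{-1}|\nabla u|^2, u^p)$ coming from the $1/N$ term negative definite. The condition $p\ge (N+3)/(N-1)$ enters through the sign of the $u^{2p}$ and $u^{p-1}|\nabla u|^2$ coefficients. In the subcritical regime $q<2p/(p+1)$, the mixed terms $u^{p}|\nabla u|^q$ and $|\nabla u|^{2q}$ are dominated using Young's inequality. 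This requires the a priori bound \eqref{u<M1*}: it guarantees $u^{p}\lesssim M|\nabla u|^{q}\cdot(\ldots)$ on the superlevel set where $w$ is large, so that the sign-indefinite $u^p$-contributions are controlled by the good gradient terms. After this step we should have, where $z$ is large, $\mathcal L_\alpha z\le -c\,z^{\theta}$ with two competing homogeneities: $\theta$ coming from the $w^{\gamma+1}u^{\alpha-2}$ term (giving $\text{dist}^{-1}$) and one from the $M|\nabla u|^{2q-2}$-type term (giving $\text{dist}^{-1/(3-q)}$).

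\textbf{Step 2: localization.} Take $x_0\in\Omega$, $R=\text{dist}(x_0,\partial\Omega)$, and a cutoff $\eta$ supported in $B_{3R/4}$ with $|\nabla\eta|\lesssim R^{-1}\eta^{1/2}$ and $|\Delta\eta|\lesssim R^{-2}$. Compute $\mathcal L_\alpha(z\eta)$. The cross terms $\nabla z\cdot\nabla\eta$ are rewritten using the value of $\nabla(z\eta)$ at the maximum point, and the drift term $\mathcal H_\alpha\cdot\nabla\eta\, z$ is estimated via $|\mathcal H_\alpha|\lesssim M|\nabla u|^{q-1}+u^{-1}|\nabla u|$, followed by Young's inequality. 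The outcome is that $\mathcal L_\alpha(z\eta)<0$ on $\{z\eta>C(R^{-a}+R^{-b})\}$. At an interior maximum of $z\eta$ we have $\mathcal L_\alpha(z\eta)\ge0$, so the maximum principle on this superlevel set (in the spirit of Cirant--Goffi) yields $z\eta\le C(\ldots)$. Evaluating at $x_0$ and converting back gives the stated bound on $|\nabla u^{(\alpha+2)/2}|$.

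\textbf{Step 3: the Liouville consequence.} If $\Omega=\mathbb R^N$, letting $R\to\infty$ gives $\nabla u^{(\alpha+2)/2}\equiv0$, so $u$ is a positive constant. But then $0=-\Delta u=u^p>0$, a contradiction.

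The main obstacle is Step 1: the precise algebra showing that the quadratic form is negative under exactly the ranges of $\alpha$ stated and $p\ge(N+3)/(N-1)$, and the use of \eqref{u<M1*} to absorb the $u^p$-versus-$M|\nabla u|^q$ interaction. We would try first to write everything in terms of $t=u^{p}/(M|\nabla u|^{q})$ and $s=|\nabla u|^2/(u\,|\Delta u|)$, and check positivity of the resulting polynomial in $(s,t)$.
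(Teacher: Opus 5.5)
\textbf{There is a genuine gap: Step~1, which is the whole content of the theorem, is not carried out, and the mechanism you predict for it cannot work.} Your outer architecture is right and is also the paper's: a $u^\alpha$-weighted Bernstein quantity, the drift operator $\mathcal L_\alpha$, a cut-off with $a=1/2$, the maximum principle on a superlevel set $\{z>C(R^{-2}+R^{-2/(3-q)})\}$, and the Liouville step.

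The problems with your account of Step~1 are these.
\begin{itemize}
\item In $G_1$ the term $2p\,u^{p-1}w$ produced by differentiating $u^p$ has a \emph{positive} coefficient. The paper bounds it by $6p\,u^{p-1}w\eta$, using $f''u/(f')^2<1$.
\item Making this coefficient negative is the mechanism of Theorem~\ref{them:p,q<n+3,n+1}, and it stops working exactly at $p=(N+3)/(N-1)$.
\item For $N\ge3$ and $p\ge(N+2)/(N-2)$, no inequality that ignores the $M$-terms can close. The reason is that $-\Delta u=u^p$ then has bounded positive entire solutions with arbitrarily small supremum.
\item In the paper, $p\ge(N+3)/(N-1)$ is used only through $p>1$.
\item The terms $u^{2p}$, $u^p|\nabla u|^q$ and $M^2|\nabla u|^{2q}$ coming from $-(\Delta u)^2/N$ are the \emph{good} terms, not errors to be dominated.
\item For $q<0$ (part of $G_1$) the gradient term is small precisely where $w$ is large. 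So ``$u^p\lesssim M|\nabla u|^q$ on the superlevel set'' is not how the bound on $u$ can be used.
\end{itemize}

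Three ingredients are missing.

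(i) \emph{A coercive $w^2$ term.} The paper obtains $\alpha(\alpha+1)u^{-2}(f')^2w^2<0$ from the weight $u^\alpha$, through $-\Delta u^\alpha$ and the rewriting of $-2\nabla u^\alpha\cdot\nabla w$ into the drift $\mathcal H_\alpha$. It then takes $f(s)=m(s+1)^\beta-2m$ with $\beta=1+1/C$, so that $(f''/f')'+C(f''/f')^2=0$ cancels every Young remainder of the form $(f''/f')^2w^2$.

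(ii) \emph{Absorption of $6p\,u^{p-1}w$ by interpolation.}
\begin{itemize}
\item For $q<1$, Young with exponents $2-q$ and $(2-q)/(1-q)$ splits it into a multiple of $(f''/f')^{2q-2}u^{(p-1)(2-q)}w^q$ plus $(f''/f')^2w^2$. The first piece is absorbed by the good term $-\frac{M^2}{4N}(f')^{2q-2}w^q$.
\item For $1<q<2p/(p+1)$, it is split into $\frac{M^2}{4N}(f')^{2q-2}w^q$ plus $CM^{-2/(q-1)}u^{(p-1)q/(q-1)}(f')^{-2}$. The second piece is absorbed by $-\frac{1}{N}u^{2p}(f')^{-2}$.
\end{itemize}
Both absorptions hold exactly when $u\lesssim M^{2/(2p-(p+1)q)}$; this is where \eqref{u<M1*} enters.

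(iii) \emph{The restrictions on $\alpha$ are not negative-definiteness conditions.}
\begin{itemize}
\item $\alpha>-1/(3N)$ gives $N\alpha^2+\alpha(\alpha+1)/2<0$. Then the Young remainder $N\alpha^2u^{-2}(f')^2w^2$ of the cross term $-M\alpha(q-1)|f'|^qu^{-1}w^{(q+2)/2}$ is absorbed by half of the coercive term.
\item $\alpha>-4/(3-q)$ makes the exponent $(\alpha(3-q)+4)/2$ of $u$ positive in the cut-off error coming from $\mathcal H_\alpha\cdot\nabla\eta$. The upper bound on $u$ then controls that error.
\item That error term, not a second coercive term, is the source of $R^{-2/(3-q)}$.
\end{itemize}

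Your initial power substitution $f(s)=-s^\theta$ is not itself the problem: it yields a coercive term $-2(\theta-1)v^{-2}w^2$ on its own. But you would still have to carry out (ii), and you would have to derive the admissible range of $\alpha$ rather than read it off the statement.
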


We first introduce an auxiliary function $f$ with $f'\neq 0$, and study the transformation $u=-f(v)$. Setting $w=|\nabla v|^2$, we derive a differential inequality for $u^\alpha w$, instead of one for $w$, as follows
\begin{align}\label{u-alpha-Lu-alpha}\nonumber
  u^{-\alpha}\mathcal L_\alpha(u^\alpha w)\leq &\left(C+\frac{f''}{(f')^2}u \right)u^{p-1}w +C\left(|f'|^{q-2}f''+\alpha u^{-1}|f'|^q\right)w^{\frac{q+2}2}\\
  &+\left(\alpha(\alpha+1)u^{-2}(f')^2+2\alpha f'' u^{-1}\right)w^2+2\left(\frac{f''}{f'}\right)'w^2-2|D^2 v|^2
\end{align}
with $C=C(N,M,p,q,\alpha)>0$, where $\mathcal L_\alpha$ and $\mathcal H_\alpha$ are given by \eqref{L-alpha-Intru} and \eqref{H-alpha-Intru}, respectively.
Since $q$ is subcritical and hence $q<2$, the terms containing $w^2$ on the right-hand side of \eqref{u-alpha-Lu-alpha} serve as dominate negative contributions for a suitable function $f$. With a delicate choice of $f$, the first two terms on the right-hand side of \eqref{u-alpha-Lu-alpha} can be partially absorbed by $w^2$ and $|D^2v|^2$ arising from $(\Delta u)^2$. Subsequently, thanks to introducing the parameter $-1<\alpha<0$ and utilizing the boundedness of $u$, we arrive at
\begin{equation*}
  2u^{-\alpha}\mathcal L_\alpha(u^\alpha w)\leq \alpha(\alpha+1)u^{-2}(f')^2w^2< 0
\end{equation*}
for $w>0$. Once this is done, replacing $\mathcal L_\alpha(u^\alpha w)$ by $\mathcal L_\alpha(u^\alpha w\eta)$ preserves the validity of previous arguments for a given cut-off function $\eta$. Setting $z=u^\alpha w\eta$, it follows that
\begin{equation*}
  \mathcal L_\alpha(z)<0\quad \text{in}\ \Omega':=\{x\in B_{3R/4};\,z(x)>C\},
\end{equation*}
where $x_0\in \Omega$, $R>0$ and $C>0$ depends on parameters and the cut-off function $\eta$.  Consequently, applying the maximum principle on $\Omega'$, we deduce the estimates for $u^\alpha w$ on $\Omega$.

\begin{figure}
\centering
\begin{tikzpicture}
\begin{axis}[
    axis lines = middle,
    axis line style = {->,line width=0.98pt},
    axis equal,
    xlabel = {$p$},
    ylabel = {$q$},
    xmin = -1, xmax = 4,
    ymin = -1.5, ymax = 4,
    ticks = none,
    width = 9cm,
    height = 8cm,
    xlabel style={at={(axis description cs:1,0.26)},anchor=west},
    ylabel style={at={(axis description cs:0.26,1)},anchor=south},
]

\fill[blue!50] (axis cs:-1.7,-1.7) rectangle (axis cs:2,1.5);
\fill[orange!90] (axis cs:-1.7,5) rectangle (axis cs:0, 1.5);
\fill[yellow!50] (axis cs:0,1.5) rectangle (axis cs:2,5);
\fill[red!30]
    (axis cs:2,-1.7)
    -- (axis cs:2.0, 1.333)    
    -- (axis cs:2.2, 1.375)    
    -- (axis cs:2.4, 1.412)    
    -- (axis cs:2.6, 1.444)    
    -- (axis cs:2.8, 1.474)    
    -- (axis cs:3.0, 1.500)    
    -- (axis cs:3.2, 1.524)    
    -- (axis cs:3.4, 1.545)    
    -- (axis cs:3.6, 1.565)    
    -- (axis cs:3.8, 1.583)    
    -- (axis cs:4.0, 1.600)    
    -- (axis cs:4.1, 1.607)  
    -- (axis cs:4.2, 1.615)  
    -- (axis cs:4.3, 1.623)  
    -- (axis cs:4.4, 1.630)  
    -- (axis cs:4.5, 1.636)  
    -- (axis cs:4.6, 1.643)  
    -- (axis cs:4.7, 1.649)  
    -- (axis cs:4.7, 0)        
-- (axis cs:4.7,-1.7)
    -- cycle;

\fill[green!60]
    (axis cs:2,4.7)
   -- (axis cs:2.0, 1.333)    
    -- (axis cs:2.2, 1.375)    
    -- (axis cs:2.4, 1.412)    
    -- (axis cs:2.6, 1.444)    
    -- (axis cs:2.8, 1.474)    
    -- (axis cs:3.0, 1.500)    
    -- (axis cs:3.2, 1.524)    
    -- (axis cs:3.4, 1.545)    
    -- (axis cs:3.6, 1.565)    
    -- (axis cs:3.8, 1.583)    
    -- (axis cs:4.0, 1.600)    
    -- (axis cs:4.1, 1.607)  
    -- (axis cs:4.2, 1.615)  
    -- (axis cs:4.3, 1.623)  
    -- (axis cs:4.4, 1.630)  
    -- (axis cs:4.5, 1.636)  
    -- (axis cs:4.6, 1.643)  
    -- (axis cs:4.7, 1.649)  
    -- (axis cs:4.7, 4.7)
    -- cycle;

\draw[->, line width=1pt, black] (axis cs:{-1.7}, 0) -- (axis cs:{4.7}, 0);

\draw[->, line width=1pt, black] (axis cs:0, -1.5) -- (axis cs:{0}, 4);
\addplot[domain=-1.5:4, line width=0.2pt] coordinates {
    ({2}, -2)
    ({2}, 5)
};
\node at (axis cs:{2.23},-0.9) {$p=\frac{N+3}{N-1}$};

\addplot[domain=-3:4, line width=0.2pt] coordinates {
    ({-1.7}, 1.5)
    ({2}, 1.5)
};
\node at (axis cs:-0.9,{1.2}) {$q=\frac{N+1}{N-1}$};

\addplot[domain=-3:4, line width=0.1pt,dashed] coordinates {
    ({0}, 1)
    (1, 1)
};

\addplot[domain=-3:4, line width=0.1pt,dashed] coordinates {
    (1, 0)
    (1, 1)
};

\addplot[
    domain=1:4.7,
    samples=200,
    line width=0.8pt,
    densely dashed
] (x,{2*x/(x+1)});
\node at (axis cs:4.0,1.9) {$q=\frac{2p}{p+1}$};
\node at (axis cs:1.5,0.8) {$(1,1)$};
\node at (axis cs:0,0) [below left] {$0$};
\end{axis}
\end{tikzpicture}
\begin{tikzpicture}
\path[fill=red!0]
  (4,3)-- (5,3)-- (5,3.5)-- (4,3.5)-- cycle;
\draw[dashed, thick]
  (4,3.25)-- (5,3.25);
\node[anchor=west] at (5,3.25){$G_6$};
\path[fill=yellow!50](4, 4)--(5,4)--(5,4.2) node[anchor=west]{$G_5$}--(5,4.5)--(4,4.5);
\path[fill=orange!90](4, 5)--(5,5)--(5,5.2) node[anchor=west]{$G_4$}--(5,5.5)--(4,5.5);
\path[fill=blue!50](4, 6)--(5,6)--(5,6.2) node[anchor=west]{$G_3$}--(5,6.5)--(4,6.5);
\path[fill=green!60](4, 7)--(5,7)--(5,7.2) node[anchor=west]{$G_2$}--(5,7.5)--(4,7.5);
\path[fill=red!30](4, 8)--(5,8)--(5,8.2) node[anchor=west]{$G_1$}--(5,8.5)--(4,8.5);
\end{tikzpicture}	
\caption{The range of $(p,q)$ when $N=5$.}\label{fig:parameter_space}
\end{figure}

We now turn to the supercritical case of $q$ in the following theorem.

\begin{theorem}\label{them:p>,q>}
Let $(p,q)\in G_2$. Assume $u$ is a positive solution of \eqref{eq1} in $\Omega$ satisfying
\begin{equation}\label{u>M2}
  u\geq c_{N,p,q}M^{\frac{2}{2p-(p+1)q}}
\end{equation}
for some $c_{N,p,q}>0$. Then there exists $C=C(N,M,q)>0$ such that
\begin{equation*}
  |\nabla u(x)|\leq C{\rm dist}^{-\frac{1}{q-1}}(x,\partial \Omega),\quad x\in\Omega.
\end{equation*}
Consequently, \eqref{eq1} possesses no positive solution in $\mathbb R^N$ satisfying \eqref{u>M2}.
\end{theorem}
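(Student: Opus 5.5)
We use the Bernstein method on $w=|\nabla u|^2$ directly, i.e.\ auxiliary function $f(v)=-v$ and exponent $\alpha=0$. The point is that for supercritical $q$, the lower bound \eqref{u>M2} makes the gradient term dominate the source $u^p$. The estimate $\mathrm{dist}^{-1/(q-1)}$ is exactly the Lions-type bound for the Hamilton--Jacobi equation \eqref{Hami-Joc}. Note that in $G_2$ we have $q>2p/(p+1)$ with $p\ge (N+3)/(N-1)>1$, so $q>1$.

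\textbf{Step 1: Bochner identity.} By Bochner's formula,
\[
\tfrac12\Delta w=|D^2u|^2+\nabla u\cdot\nabla\Delta u .
\]
Substituting $\Delta u=-u^p-M w^{q/2}$ gives
\[
\nabla u\cdot\nabla\Delta u=-pu^{p-1}w-\tfrac{qM}{2}w^{\frac{q-2}{2}}\nabla u\cdot\nabla w .
\]
Define $\mathcal L z=-\Delta z+qMw^{\frac{q-2}2}\nabla u\cdot\nabla z$. Then
\[
\tfrac12\mathcal L w=-|D^2u|^2+pu^{p-1}w .
\]
We bound the Hessian below by $|D^2u|^2\ge \frac1N(\Delta u)^2=\frac1N(u^p+Mw^{q/2})^2$, and expand the square.

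\textbf{Step 2: absorb $pu^{p-1}w$.} This is the main obstacle, since $u^{p-1}w$ has no sign help when $p>1$. We split into two regions.
\begin{itemize}
\item Where $Mw^{q/2}\le u^p$, we have $|D^2u|^2\ge u^{2p}/N$, and $u^{p-1}w\le u^{p-1}(u^p/M)^{2/q}$.
\item Where $Mw^{q/2}\ge u^p$, we have $u^{p-1}w\le u^{-1}M^{-1}w^{\frac{q+2}{2}}\cdot u^{p}\,\cdot$ (a small factor).
\end{itemize}
Concretely, in the second region, Young's inequality yields
\[
pu^{p-1}w\le \varepsilon M^2w^q+C_\varepsilon M^{-\frac{2(p-1)}{2-q}}\cdots .
\]
It is simpler to use $u^{p-1}=u^{p}/u$ together with $u\ge c M^{2/(2p-(p+1)q)}$. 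Since $2p-(p+1)q<0$, this lower bound is equivalent to an upper bound $u^{p-1}\le c' M^{\cdots} u^{\,\cdot}$.

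The cleanest form is the scaling statement: with the scaled variable $\tilde u=M^{-2/(2p-(p+1)q)}u\ge c$, the bound $u^{p-1}w\le \frac{1}{2N}(u^{2p}+M^2w^q)$ holds pointwise when $c$ is large, by weighted AM--GM. Indeed, $u^{p-1}w\le u^{2p\theta}(M^2w^q)^{1-\theta}$ with $\theta$ fixed by $w$-homogeneity, $1-\theta=1/q$. The remaining mismatch is a power $u^{p-1-2p(q-1)/q}M^{-2/q}$, which equals $\tilde u^{\,-(\text{positive})}$ exactly because $q>2p/(p+1)$.

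The upshot is the inequality
\[
\tfrac12\mathcal L w\le -\tfrac{M^2}{2N}w^q
\]
in $\Omega$. If this works only for $c_{N,p,q}$ large, that is the constant in \eqref{u>M2}. The condition $p\ge (N+3)/(N-1)$ might enter via the cross term $2u^pMw^{q/2}/N$, which is nonnegative and therefore only helps.

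\textbf{Step 3: localization and Liouville consequence.} Fix $x_0$, set $R=\mathrm{dist}(x_0,\partial\Omega)$, and take a cutoff $\eta$ on $B_R(x_0)$ with $|\nabla\eta|\le C\eta^{1/2}/R$ and $|D^2\eta|\le C/R^2$ (a power of a standard cutoff). Let $z=\eta w$ and evaluate at an interior maximum point of $z$. There $\nabla z=0$ and $\Delta z\le 0$, and the first-order term gives $qMw^{\frac{q-2}2}|\nabla u|\,w|\nabla\eta|\lesssim Mw^{\frac{q+1}2}R^{-1}\eta^{1/2}$. Combining, we obtain
\[
M^2\eta w^q\le C\bigl(wR^{-2}+Mw^{\frac{q+1}{2}}R^{-1}\bigr).
\]
Multiplying by $\eta^{q-1}$ and using Young's inequality gives $z\le C\bigl(R^{-2/(q-1)}+M^{-2/(q-1)}R^{-2/(q-1)}\bigr)$. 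Hence $|\nabla u(x_0)|\le C R^{-1/(q-1)}$. In $\mathbb R^N$ we let $R\to\infty$ to get $\nabla u\equiv0$, so $u$ is constant. But then $0=u^p>0$, a contradiction, which gives nonexistence.
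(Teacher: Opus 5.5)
Your route is essentially the paper's: Bernstein on $w=|\nabla u|^2$ (the paper's $f(s)=-s$, $\alpha=0$, $\gamma=1$), the bound $|D^2u|^2\ge \frac1N(u^p+Mw^{q/2})^2\ge\frac1N(u^{2p}+M^2w^q)$, and absorption of $pu^{p-1}w$ using the lower bound on $u$.

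Your final form of Step~2 is correct, and a little cleaner than the paper's. Set $\kappa=\frac{(p+1)q-2p}{q}>0$. Then $u^{p-1}w=\tilde u^{-\kappa}(u^{2p})^{\frac{q-1}{q}}(M^2w^q)^{\frac1q}$ is an identity. Weighted AM--GM therefore gives $pu^{p-1}w\le\frac1{2N}(u^{2p}+M^2w^q)$ once $\tilde u\ge(2Np)^{1/\kappa}$, which is an explicit $c_{N,p,q}$. The paper instead uses two Young inequalities; these produce a constant $C(N,p,q)M^{-4p/((p+1)q-2p)}$, which the term $\frac1{2N}u^{2p}$ then dominates. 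You can delete the region-splitting sketch before it. As in the paper, $p\ge\frac{N+3}{N-1}$ plays no role here.

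One harmless slip: the drift should be $-qMw^{\frac{q-2}{2}}\nabla u\cdot\nabla z$, but you only use its absolute value. Replacing the paper's superlevel-set maximum principle by a classical interior maximum of $z=\eta w$ is fine.

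The genuine gap is the cutoff in Step~3. With only $|\nabla\eta|\le C\eta^{1/2}R^{-1}$ and $|D^2\eta|\le CR^{-2}$, at the maximum point you get $\frac{M^2}{N}\eta w^q\le C\bigl(wR^{-2}+Mw^{\frac{q+1}{2}}\eta^{1/2}R^{-1}\bigr)$. Multiplying by $\eta^{q-1}$ gives
\[
M^2z^q\le C\bigl(z\,\eta^{q-2}R^{-2}+Mz^{\frac{q+1}{2}}\eta^{\frac{q-2}{2}}R^{-1}\bigr).
\]
This contains negative powers of $\eta$ whenever $q<2$. If you drop the factor $\eta^{1/2}$, as in your display, you even get $\eta^{\frac{q-3}{2}}$, which is a negative power for all $q<3$. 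The maximum point may sit where $\eta$ is tiny, so no bound on $z$ follows.

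This range is not vacuous. Since $\frac{2p}{p+1}<2$, $G_2$ always contains $\frac{2p}{p+1}<q<2$.

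The fix is to use a higher power $\eta=\rho^k$, as in the paper. This gives $|\nabla\eta|\le C\eta^aR^{-1}$ and $|D^2\eta|+\eta^{-1}|\nabla\eta|^2\le C\eta^aR^{-2}$ with $a=1-\frac2k$. Choose $a\ge\max\{0,\frac{3-q}{2}\}$. This dominates $2-q$, and it is $<1$ precisely because $q>1$. Then both exponents $q-2+a$ and $a+\frac{q-3}{2}$ are nonnegative, so $M^2z^q\le C(zR^{-2}+Mz^{\frac{q+1}{2}}R^{-1})$. Treating the two cases gives $z\le CM^{-\frac{2}{q-1}}R^{-\frac{2}{q-1}}$, and your conclusion and Liouville step then go through.

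Incidentally, the paper's choice $a=\max\{0,2-q\}$ needs the same correction for its $|\nabla\eta|$-term.
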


When $p$ and $q$ are smaller than those in the preceding cases, including negative exponents, we establish the following result for positive solutions to \eqref{eq1}.

\begin{theorem}\label{them:p,q<n+3,n+1}
Let $(p,q)\in G_3$. Assume $u$ is a positive solution of \eqref{eq1} in $\Omega$, then there exists $C=C(N,p,q)>0$ such that
\begin{equation*}
  \frac{|\nabla u(x)|}{u(x)}\leq C{\rm dist}^{-1}(x,\partial \Omega),\quad x\in\Omega.
\end{equation*}
Consequently, \eqref{eq1} possesses no positive solution in $\mathbb R^N$.
\end{theorem}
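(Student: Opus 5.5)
We will prove the estimate for $\nabla\log u$ by the Bernstein method applied to $v=\log u$, following the general scheme of the introduction with $f(v)=-e^{v}$ (so $u=e^{v}$) and $\alpha=0$. Set $w=|\nabla v|^2=|\nabla u|^2/u^2$. From \eqref{eq1},
\begin{equation*}
-\Delta v=w+u^{p-1}+M u^{q-1}w^{\frac q2}.
\end{equation*}
Bochner's formula gives $\frac12\Delta w=|D^2v|^2+\nabla v\cdot\nabla\Delta v$. Expanding $\nabla\Delta v$ yields:
\begin{itemize}
\item a drift term $\mathcal H\cdot\nabla w$ with $\mathcal H=2\nabla v+qMu^{q-1}w^{\frac{q-2}2}\nabla v$;
\item a term $-(p-1)u^{p-1}w$;
\item a term $-(q-1)Mu^{q-1}w^{\frac{q+2}2}$.
\end{itemize}
Hence, with $\mathcal L=-\Delta+\mathcal H\cdot\nabla$,
\begin{equation*}
\tfrac12\mathcal L w=-|D^2v|^2-(p-1)u^{p-1}w-(q-1)Mu^{q-1}w^{\frac{q+2}2}.
\end{equation*}

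\textbf{Absorbing the bad terms.} The key point is to use the refined inequality
\begin{equation*}
|D^2v|^2\ge \frac{(\Delta v)^2}{N}+\frac{N}{N-1}\Bigl(\frac{\Delta v}{N}-\frac{v_{11}}{1}\Bigr)^2,
\end{equation*}
taken in the direction of $\nabla v$, i.e.\ the standard estimate
\begin{equation*}
|D^2v|^2\ge \frac{(\Delta v)^2}{N}+\frac{N}{N-1}\Bigl(\frac{\Delta v}{N}-\frac{\nabla v\cdot\nabla w}{2w}\Bigr)^2.
\end{equation*}
Write $\Delta v=-(w+a+b)$ with $a=u^{p-1}\ge0$ and $b=Mu^{q-1}w^{q/2}\ge0$. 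The gradient term $\nabla v\cdot\nabla w/w$ is absorbed into the drift, at the cost of modifying $\mathcal H$. One is then left with a quadratic form in $(w,a,b)$ of the shape
\begin{equation*}
\frac{1}{N-1}(w+a+b)^2+(p-1)aw+(q-1)bw \quad (\text{up to cross terms of order } \epsilon).
\end{equation*}
The thresholds in the statement are exactly what makes this form coercive. Expanding, the coefficient of $aw$ is $\frac{2}{N-1}+p-1$, which is positive iff $p>\frac{N-3}{N-1}$; more precisely, after the Cauchy--Schwarz loss coming from the cross term $\frac{2N}{N-1}\frac{\Delta v}{N}\cdot\frac{\nabla v\cdot\nabla w}{2w}$, the sharp conditions become $p<\frac{N+3}{N-1}$ and $q<\frac{N+1}{N-1}$. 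For $a,b\ge0$, the precise inequality I aim for is $(w+a+b)^2/(N-1)+(p-1)aw+(q-1)bw\ge c\,w^2$ for some $c=c(N,p,q)>0$.

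This is the main obstacle. One must pick the weight $\beta$ in a splitting $\frac{N}{N-1}(X-Y)^2\ge \frac{N}{N-1}(1-\beta)X^2-\frac{N}{N-1}(\beta^{-1}-1)Y^2$, with $Y$ the drift-type term, to get nonnegativity of the discriminant-type conditions separately in the $a$ and $b$ directions. Negative $p$ or $q$ are harmless here, since $a$ and $b$ only enter as nonnegative quantities. If the direct choice fails near the boundary lines, I would instead use $w^\gamma$ with $\gamma$ close to $1$ to gain an extra $\gamma(\gamma-1)|\nabla w|^2/w$ term, as suggested in the introduction.

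\textbf{Localisation and conclusion.} Once $\mathcal L w\le -c\,w^2$ holds on $\{w>0\}$ for a modified drift $\tilde{\mathcal H}$, take a cutoff $\eta$ supported in $B_R(x_0)$ with $|\nabla\eta|\le CR^{-1}\eta^{1/2}$ and $|D^2\eta|\le CR^{-2}$, and set $z=\eta w$. At a maximum point of $z$, the drift term is controlled via $\nabla w=-w\nabla\eta/\eta$. The dangerous piece $|\tilde{\mathcal H}|\,|\nabla\eta|w$ contains $b\,w^{1/2}|\nabla\eta|\,w/\sqrt w$, which is handled by Young's inequality against the $b\,w$ part of the coercive form; I would keep a small multiple of $bw$ and $b^2$ in the coercive estimate precisely for this purpose. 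This yields $z\le C R^{-2}$, hence $|\nabla u|/u\le C\,{\rm dist}^{-1}(x,\partial\Omega)$.

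For the Liouville statement, letting $R\to\infty$ gives $\nabla u\equiv0$, so $u$ is a positive constant. But then $0=u^p>0$, a contradiction.
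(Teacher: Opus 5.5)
Your overall scheme (Bernstein method for $w=|\nabla\log u|^2$, the refined inequality in the direction $e_1=\nabla v/|\nabla v|$, terms linear in $\nabla w$ moved into the drift, then a cutoff and a maximum point) is workable. However, the central identity has a sign error, and this invalidates the step you yourself call the main obstacle.

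\textbf{The sign error.} Your expansion $\nabla v\cdot\nabla\Delta v=-\tfrac12\mathcal H\cdot\nabla w-(p-1)aw-(q-1)bw$ is correct. Combined with Bochner, it gives $\tfrac12(-\Delta-\mathcal H\cdot\nabla)w=-|D^2v|^2+(p-1)u^{p-1}w+(q-1)Mu^{q-1}w^{\frac{q+2}{2}}$. This is exactly the paper's $\mathcal L_0$ for $f(s)=-e^s$, whose drift is $-(2+qMu^{q-1}w^{\frac{q-2}{2}})\nabla v$.

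The flipped drift is harmless, but the flipped reaction terms are not. They are dangerous for \emph{large} $p,q$, so the form that must be coercive is $F(w,a,b)=\frac{(w+a+b)^2}{N-1}-(p-1)aw-(q-1)bw$. The inequality you state as your target, with $+(p-1)aw+(q-1)bw$, is false on part of $G_3$: take $a=w$ and let $p\to-\infty$.

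Your surrounding claims are also inconsistent with each other:
\begin{itemize}
\item Your own coefficient $\frac{2}{N-1}+p-1$ says small $p$ is harmful, yet you then assert that negative exponents are harmless.
\item You attribute the thresholds $p<\frac{N+3}{N-1}$, $q<\frac{N+1}{N-1}$ to a ``Cauchy--Schwarz loss''. No such loss can occur once the cross term $-\frac{2}{N-1}\Delta v\,v_{11}=\frac{w+a+b}{(N-1)w}\nabla v\cdot\nabla w$ has been put into the drift.
\end{itemize}
As written, the coercivity is asserted, not proved.

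\textbf{How to repair it.} Once the sign is fixed, the argument goes through, and it is then a genuinely different and shorter route than the paper's. Dropping $\frac{N}{N-1}v_{11}^2\geq0$ gives $\tfrac12\tilde{\mathcal L}w\leq -F(w,a,b)$ with drift $\tilde{\mathcal H}=(-2-qMu^{q-1}w^{\frac{q-2}{2}}+\frac{2(w+a+b)}{(N-1)w})\nabla v$. This drift is bounded where $w$ is bounded below on compacts.

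For coercivity, set $w=1$, $s=a+b$ and $\mu=\max\{p-1,q-1,0\}$. Then $(N-1)F\geq 1+(2-(N-1)\mu)s+s^2$, and at $w=0$ one has $F=(a+b)^2/(N-1)$. Hence $F\geq c(N,p,q)(w^2+a^2+b^2)$ on $\{a,b,w\geq0\}$ whenever $\max\{p,q\}<\frac{N+3}{N-1}$, a range that contains $G_3$.

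You must keep all of $w^2,a^2,b^2$ in reserve. The term $w\tilde{\mathcal H}\cdot\nabla\eta$ produces $w^{3/2}|\nabla\eta|$ and $bw^{1/2}|\nabla\eta|$, and also $aw^{1/2}|\nabla\eta|$. The last comes from your new drift and is missing from your list. Each is absorbed by Young into $\epsilon(w^2+a^2+b^2)\eta$ plus pure cutoff terms. Your maximum-point argument then yields $\eta w\leq C(N,p,q)R^{-2}$, and your Liouville step is fine.

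\textbf{Comparison with the paper.} The paper keeps the drift $\mathcal H_0$ and splits $-2v_{11}(w+a+b)$ by Young with weights $\gamma^{\pm1/2}$. It absorbs the resulting $v_{11}^2$ with $-\gamma(\gamma-1)w^{-1}|\nabla w|^2\leq-4\gamma(\gamma-1)v_{11}^2$. This forces working with $w^\gamma$ for $\gamma$ \emph{large}, not $\gamma$ close to $1$ as in your fallback. The paper then uses only $w^2+a^2\geq 2aw$ and reserves the $b^2$ term for the cutoff. That is why it needs $q<\frac{N+1}{N-1}$, whereas the repaired version of your approach covers $\max\{p,q\}<\frac{N+3}{N-1}$.
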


The proof of this result relies on the inequality
\begin{equation*}
\mathcal L(w^\gamma):= \mathcal L_0(w^\gamma)\leq -w^{\gamma+1}
\end{equation*}
with $\gamma>0$ large enough. The choice of $w^\gamma$ yields the negative term $-\gamma(\gamma-1)w^{-1}|\nabla w|^2$, which absorbs positive terms arising from expansion of $-\gamma |D^2v|^2$. By introducing a cut-off function, we derive the pointwise gradient estimates via the maximum principle.

In Theorems \ref{theorem:p>N+3,q<1} and \ref{them:p,q<n+3,n+1}, we have established the gradient estimates and Liouville-type properties of solutions to \eqref{eq1} when $p\geq (N+3)/(N-1)$ or $q<(N+1)/(N-1)$. We now divide the remaining region of the plane into two subregions, denoted by $G_4$ and $G_5$. In the region $G_4$,  where $p\leq 0$, the nonexistence of positive solutions to \eqref{eq1} follows as a special case of the result obtained by Bai, Zhang and Zhang \cite[Theorem 1.1]{Bai-Zhang} via pointwise gradient estimates. We note that $u^p$ has no effect on gradient estimates in this region. Indeed, selecting $f=-Id$, where $Id$ denotes the identity operator, we obtain that $\mathcal L(|\nabla u|^2)\leq 2p u^{p-1}|\nabla u|^2-2|D^2u|^2<0$. Replacing $|\nabla u|^2$ by $|\nabla u|^2\eta$, we arrive at the following gradient estimates.

\begin{theorem}\label{them:p<=0,q>}
Let $(p,q)\in G_4$. Assume $u$ is a positive solution of \eqref{eq1} in $\Omega$, then there exists $C=C(N,M,q)>0$ such that
\begin{equation*}
  |\nabla u(x)|\leq C{\rm dist}^{-\frac{1}{q-1}}(x,\partial \Omega),\quad x\in\Omega.
\end{equation*}
Consequently,  \eqref{eq1} possesses no positive solution in $\mathbb R^N$.
\end{theorem}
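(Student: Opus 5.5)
We propose a Bernstein-type argument applied directly to $w=|\nabla u|^2$, as already indicated before the statement (the case $f=-Id$, $v=u$). The key observation is that for $p\le 0$ the zero-order term $u^p$ only helps.

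\emph{Step 1: the differential inequality for $w$.} By Bochner's formula, $\tfrac12\Delta w=|D^2u|^2+\nabla u\cdot\nabla\Delta u$. Differentiating \eqref{eq1} gives
\begin{equation*}
\nabla\Delta u=-pu^{p-1}\nabla u-\tfrac{qM}{2}w^{\frac{q-2}{2}}\nabla w .
\end{equation*}
Combining the two,
\begin{equation*}
-\Delta w+qMw^{\frac{q-2}{2}}\nabla u\cdot\nabla w=2pu^{p-1}w-2|D^2u|^2\le -2|D^2u|^2 ,
\end{equation*}
where the last step uses $p\le0$. This is $\mathcal L(w)\le -2|D^2u|^2$ with $\mathcal H=qMw^{(q-2)/2}\nabla u$.

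We then use $|D^2u|^2\ge (\Delta u)^2/N$ and $(\Delta u)^2=(u^p+Mw^{q/2})^2\ge M^2w^q$, which holds since both terms are positive. This yields
\begin{equation*}
\mathcal L(w)\le -\tfrac{2M^2}{N}w^{q}.
\end{equation*}
This computation is rigorous on $\{w>0\}$, where $u$ is smooth by elliptic regularity. For $q\ge (N+1)/(N-1)>1$ there is no singularity at $w=0$.

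\emph{Step 2: localization.} Fix $x_0\in\Omega$ with $R=\mathrm{dist}(x_0,\partial\Omega)$. Take a cutoff $\eta\in C_0^\infty(B_R)$ with $\eta\equiv1$ on $B_{R/2}$, $|\nabla\eta|\le C\eta^{1/2}/R$ and $|\Delta\eta|\le C/R^2$. (Higher powers of a standard bump are convenient here.) Set $z=w\eta$ and let $\bar x$ be a maximum point of $z$, where we may assume $z(\bar x)>0$.

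At $\bar x$ we have $\nabla w=-w\nabla\eta/\eta$ and $\Delta z\le 0$. Expanding $\Delta z$ and using the inequality from Step 1, we obtain at $\bar x$
\begin{equation*}
\tfrac{2M^2}{N}\eta w^q\le C\Bigl(\tfrac{w}{R^2}+\tfrac{w|\nabla\eta|^2}{\eta}\Bigr)+qMw^{\frac{q+1}{2}}|\nabla\eta| .
\end{equation*}

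\emph{Step 3: the main obstacle.} The drift term $qMw^{(q+1)/2}|\nabla\eta|$ is the delicate one. It scales like $w^{(q+1)/2}$, which is strictly below $w^q$ because $q>1$. Its coefficient is $|\nabla\eta|\le C\eta^{1/2}/R$.

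Multiply the inequality by $\eta^{q-1}$ and write everything in terms of $z$. Young's inequality with exponents $2q/(q+1)$ and $2q/(q-1)$ bounds the drift term by $\varepsilon z^q+C_\varepsilon R^{-2q/(q-1)}$. The linear terms give $z\cdot R^{-2}\le \varepsilon z^q+C R^{-2q/(q-1)}$. Here the exponent $R^{-2}$ forces $\eta$ to be chosen as a high power so that all powers of $\eta$ are non-negative.

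Absorbing the $\varepsilon z^q$ terms gives $z(\bar x)\le C(N,M,q)R^{-2/(q-1)}$. Hence $|\nabla u(x_0)|\le C\,\mathrm{dist}^{-1/(q-1)}(x_0,\partial\Omega)$.

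\emph{Step 4: Liouville.} If $\Omega=\mathbb R^N$, let $R\to\infty$ to get $\nabla u\equiv0$. Then $u$ is a positive constant, and \eqref{eq1} gives $0=u^p>0$, a contradiction. So there is no positive solution in $\mathbb R^N$.

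The only hypothesis we use beyond $q>1$ is $p\le0$, which makes the $u^p$ contribution to $\mathcal L(w)$ nonpositive.
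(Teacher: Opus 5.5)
Your argument is correct and is essentially the paper's proof: Bernstein on $w=|\nabla u|^2$ with $f=-\mathrm{Id}$, discarding $2pu^{p-1}w\le 0$, using $|D^2u|^2\ge (\Delta u)^2/N\ge M^2w^q/N$, and localizing with $z=w\eta$. The only difference is that you work at an interior maximum point of $z$ instead of applying the maximum principle on the superlevel set $\{z>CR^{-2/(q-1)}\}$ (Lemma \ref{lem:maxi-pric}), which is an equivalent standard variant. Two small slips do not affect the result: the drift is actually $\mathcal H=-qMw^{(q-2)/2}\nabla u$ (harmless, since you only use its absolute value), and for $q<2$, which occurs in $G_4$ when $N\ge 4$, the cutoff bounds you list leave unbounded factors $\eta^{q-2}$ and $\eta^{(q-2)/2}$ after multiplying by $\eta^{q-1}$, so you need $|D^2\eta|+\eta^{-1}|\nabla\eta|^2\le CR^{-2}\eta^{2-q}$, i.e.\ \eqref{dfi-eta} with $a=\max\{0,2-q\}$ (satisfied by $\eta=\rho^k$ with $k$ large).
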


We obtain the subsequent theorem via delicate analysis of $\mathcal L(w^\gamma)$ with $\gamma>1$ on $G_5$, where $p>0$.

\begin{theorem}\label{them:0<p<,q>}
Let $(p,q)\in G_5$. Assume $u$ is a positive solution of \eqref{eq1} in $\Omega$ satisfying
\begin{equation}\label{u<M3}
  u\leq c_{N,p,q}M^{\frac{2}{2p-(p+1)q}} \quad \text{for}\ q\leq2,
\end{equation}
or
\begin{equation}\label{u>M3}
  u\geq c_{N,p,q}M^{\frac{2}{2p-(p+1)q}} \quad \text{for}\ q>2
\end{equation}
for some $c_{N,p,q}>0$. Then there exists $C=C(N,p,q)>0$ such that
\begin{equation*}
  \frac{|\nabla u(x)|}{u(x)}\leq C{\rm dist}^{-1}(x,\partial \Omega),\quad x\in\Omega.
\end{equation*}
Consequently, \eqref{eq1} possesses no positive solution in $\mathbb R^N$ satisfying conditions  \eqref{u<M3} or \eqref{u>M3}.
\end{theorem}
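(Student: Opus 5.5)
We follow the Bernstein scheme used for Theorem \ref{them:p,q<n+3,n+1}, with the logarithmic change of variables $u=-f(v)$, $f(v)=-e^{-v}$, i.e. $v=-\ln u$. Then $|\nabla v|=|\nabla u|/u$, and the claimed estimate is equivalent to $w:=|\nabla v|^2\le C\,{\rm dist}^{-2}(x,\partial\Omega)$.

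\textbf{Step 1: the equation for $v$.} Since $\nabla u=-u\nabla v$ and $\Delta u=u(|\nabla v|^2-\Delta v)$, equation \eqref{eq1} becomes
\begin{equation*}
\Delta v=w+u^{p-1}+Mu^{q-1}w^{\frac q2}.
\end{equation*}

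\textbf{Step 2: Bochner identity.} We apply $\Delta w=2|D^2v|^2+2\nabla v\cdot\nabla\Delta v$. Differentiating the equation for $v$ gives
\begin{equation*}
\nabla\Delta v=\nabla w-(p-1)u^{p-1}\nabla v-(q-1)Mu^{q-1}w^{\frac q2}\nabla v+\tfrac{qM}{2}u^{q-1}w^{\frac{q-2}{2}}\nabla w .
\end{equation*}
Collecting the $\nabla w$ terms into a drift $\mathcal H\cdot\nabla w$ defines $\mathcal L=-\Delta+\mathcal H\cdot\nabla$, for which
\begin{equation*}
\tfrac12\mathcal L w=-|D^2v|^2-(p-1)u^{p-1}w-(q-1)Mu^{q-1}w^{\frac{q+2}{2}}.
\end{equation*}
We then use $|D^2v|^2\ge \frac{(\Delta v)^2}{N}$, sharpened along the direction $\nabla v$: writing $v_{11}=\frac{\nabla v\cdot\nabla w}{2w}$, one has $|D^2v|^2\ge v_{11}^2+\frac{(\Delta v-v_{11})^2}{N-1}$. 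This refined form is what produces the thresholds $\frac{N+3}{N-1}$ and $\frac{N+1}{N-1}$. Expanding $(\Delta v)^2$ gives the positive good terms
\begin{equation*}
w^2,\quad u^{2p-2},\quad M^2u^{2q-2}w^q,
\end{equation*}
together with the cross terms $2u^{p-1}w$, $2Mu^{q-1}w^{\frac{q+2}{2}}$ and $2Mu^{p+q-2}w^{\frac q2}$.

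\textbf{Step 3: sign analysis and the role of the extra conditions.} With coefficient $\frac1{N-1}$, the combination $\bigl(\frac{2}{N-1}-(p-1)\bigr)u^{p-1}w$ is nonnegative when $p<\frac{N+3}{N-1}$. Likewise $\bigl(\frac{2}{N-1}-(q-1)\bigr)Mu^{q-1}w^{\frac{q+2}{2}}$ is nonnegative only when $q\le\frac{N+1}{N-1}$, which fails in $G_5$. The bad term $-(q-1-\frac2{N-1})Mu^{q-1}w^{\frac{q+2}{2}}$ must therefore be absorbed by $\frac{w^2}{N-1}$ and $\frac{M^2u^{2q-2}w^q}{N-1}$, via Young/AM--GM on $w^2\cdot M^2u^{2q-2}w^q$. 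This absorption only yields a quantity of order $Mu^{q-1}w^{\frac{q+2}{2}}$ with a fixed constant, so it can fail.

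This is where the hypotheses enter. Because $p>0$, the positive term $u^{2p-2}$ is available, and we use a three-term weighted AM--GM among $w^2$, $u^{2p-2}$ and $M^2u^{2q-2}w^q$. Scaling shows the bad term is controlled precisely when the dimensionless quantity $M u^{\frac{(p+1)q-2p}{2}}$ is small; equivalently, $u$ must lie on one side of $M^{\frac{2}{2p-(p+1)q}}$. Which side depends on the sign of $2p-(p+1)q$ relative to the exponents. For $q\le 2$ the $w^2$ term dominates $w^{\frac{q+2}{2}}$ at large $w$, and the upper bound \eqref{u<M3} suffices. For $q>2$ the $w^q$ term dominates and we need \eqref{u>M3}. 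We choose $c_{N,p,q}$ small (resp. large) accordingly.

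To gain extra room, and as announced before Theorem \ref{them:p,q<n+3,n+1}, we work with $w^\gamma$, $\gamma>1$ large. Its term $-\gamma(\gamma-1)w^{\gamma-2}|\nabla w|^2$ absorbs the cross terms involving $v_{11}$ coming from the refined Hessian bound and from the drift. The target inequality is
\begin{equation*}
\mathcal L(w^\gamma)\le -c\,w^{\gamma+1}\quad\text{wherever } w \text{ is large}.
\end{equation*}

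\textbf{Step 4: localization.} Fix $x_0$ and $R={\rm dist}(x_0,\partial\Omega)$, and take a cut-off $\eta$ supported in $B_R(x_0)$ with $|\nabla\eta|\le CR^{-1}\eta^{1/2}$ and $|D^2\eta|\le CR^{-2}$. We apply the maximum principle to $z=w^\gamma\eta^{m}$ at an interior maximum point. The drift term $\mathcal H\cdot\nabla\eta$ contains $Mu^{q-1}w^{\frac{q-1}{2}}$, and it is handled using the same smallness of $Mu^{\frac{(p+1)q-2p}{2}}$ together with the good term $M^2u^{2q-2}w^q$. This yields $w\le CR^{-2}$ at $x_0$, which is the desired estimate. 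The Liouville statement follows by letting $R\to\infty$: we get $\nabla u\equiv0$, so $u$ is a positive constant, which contradicts $-\Delta u=u^p>0$.

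\textbf{Main obstacle.} We expect the hardest step to be Step 3: finding weights and $\gamma$ so that the bad $Mu^{q-1}w^{\frac{q+2}{2}}$ term and the drift contributions are absorbed uniformly. We would first try a quadratic-form (discriminant) argument in the variables $\bigl(w,\ u^{p-1},\ Mu^{q-1}w^{\frac q2}\bigr)$, reading off the constant $c_{N,p,q}$ from the resulting positivity condition.
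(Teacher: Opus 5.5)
Your overall architecture is the paper's: its $f(s)=-e^{s}$ is your logarithmic substitution up to the sign of $v$, followed by Bochner, the refined Hessian bound along $\nabla v$, $w^\gamma$ with $\gamma$ large to pay for the $v_{11}$ cross terms, a cut-off, and the maximum principle. The gap is in Step 3, which carries the whole theorem, because its accounting is wrong.

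You have $\frac{2}{N-1}-(p-1)=\frac{N+1}{N-1}-p$. So the cross term alone controls $u^{p-1}w$ only for $p\le\frac{N+1}{N-1}$, by the same arithmetic that gave you the threshold for the $q$-term. The threshold $\frac{N+3}{N-1}$ appears only after the squares are also spent, via $\frac{1}{N-1}(w^2+u^{2p-2})\ge\frac{2}{N-1}u^{p-1}w$. This is exactly how the paper obtains the coefficient $2\gamma\bigl[p-\frac{N+3}{N-1}+O(\gamma^{-1/2})\bigr]$ of $u^{p-1}w$. For $p$ close to $\frac{N+3}{N-1}$ this step consumes essentially all of $w^2$ and $u^{2p-2}$. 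Reusing them in a three-term AM--GM against $Mu^{q-1}w^{\frac{q+2}{2}}$ therefore double-counts, and as set up your scheme only covers $p\le\frac{N+1}{N-1}$.

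What is actually left over is the strict margin $2\gamma(\frac{N+3}{N-1}-p)u^{p-1}w$, a $\gamma$-independent multiple of $w^2$, and $\frac{\gamma M^2}{2(N-1)}u^{2q-2}w^q$. The missing step is to split the bad term by Young as
\begin{equation*}
2\gamma qMu^{q-1}w^{\frac{q+2}{2}}\le C\gamma^{\frac12}u^{p-1}w+
\begin{cases}
C(q,\gamma)\,M^{\frac2q}u^{\frac{(p+1)q-2p}{q}}\,w^2, & q\le 2,\\
C(q,\gamma)\,M^{-\frac2q}u^{\frac{2p-(p+1)q}{q}}\,M^2u^{2q-2}w^q, & q>2.
\end{cases}
\end{equation*}
The first piece is $o(\gamma)$, so it is swallowed by the margin once $\gamma$ is taken large and then fixed. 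The hypothesis on $u$, with $c_{N,p,q}$ depending on that fixed $\gamma$, makes the prefactor of the second piece small. That piece is then absorbed by the retained $w^2$ when $q\le2$, or by $M^2u^{2q-2}w^q$ when $q>2$.

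Your explanation of the two hypotheses is also inconsistent. On $G_5$ one always has $(p+1)q-2p>0$, because $q\ge\frac{N+1}{N-1}>\frac{N+3}{N+1}>\frac{2p}{p+1}$. So the sign of $2p-(p+1)q$ never changes, and \eqref{u>M3} makes $Mu^{\frac{(p+1)q-2p}{2}}$ large, not small. The bad term is therefore not controlled ``precisely when'' this quantity is small. As the display shows, for $q\le2$ the factor in front of $w^2$ must be small, which is an upper bound on $u$. For $q>2$ the reciprocal factor in front of $M^2u^{2q-2}w^q$ must be small, which is a lower bound on $u$. This is the precise form of your correct remark that $w^2$, respectively $w^q$, dominates.

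Two minor points. Your displayed identity should read $\frac12\mathcal Lw=-|D^2v|^2+(p-1)u^{p-1}w+(q-1)Mu^{q-1}w^{\frac{q+2}{2}}$; Step 3 in fact uses these signs. The cut-off drift term needs no hypothesis at all, since $Mu^{q-1}w^{\frac{q+1}{2}}|\nabla\eta|\le\varepsilon M^2u^{2q-2}w^q\eta+C_\varepsilon w\eta^{-1}|\nabla\eta|^2$.
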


In $G_5$, we impose two distinct boundedness assumptions on $u$ to derive the Liouville-type result. The reason is that $w^{(q+2)/2}$ coming from $|\nabla u|^q$ can be absorbed by $w^2$ or $w^q$ arising from $(\Delta u)^2$.

In $G_6$, where $q=2p/(p+1)$ with $p>1$, which contains the common boundary of $G_1$ and $G_2$, solutions of \eqref{eq1} admit different  behaviors from that exhibited in $G_1$ and $G_2$. More precise, the coefficient $M$ plays a fundamental role rather than the boundedness restriction on $u$. Subsequently, we establish the estimates of solutions.

\begin{theorem}\label{them:uni-esti}
Let $(p,q)\in G_6$. Assume $u$ is a positive solution of \eqref{eq1} in $\Omega$ for
\begin{equation}\label{defi-M0}
  M\geq M_0:=2^{-\frac{p-1}{p+1}}(6N)^{\frac{p}{p+1}}(N+2)^{\frac{p-1}{p+1}}(p-1)^{\frac{p-1}{p+1}}p^{-\frac{p}{p+1}}(p+1).
\end{equation}

{\rm (i)} There exists a constant $C=C(N,M,p)>0$ such that
\begin{equation}\label{uni-esti}
  u(x)+|\nabla u(x)|^{\frac2{p+1}}\leq C{\rm dist}^{-\frac2{p-1}}(x,\partial\Omega),\quad x\in\Omega.
\end{equation}
Consequently, \eqref{eq1} possesses no positive solution in $\mathbb R^N$.

{\rm (ii)} Suppose $u\leq m$ for some $m>0$, then there exists a constant $C=C(N,M,p)>0$ such that
the solution satisfies
\begin{equation}\label{u<m-esti}
  |\nabla u(x)|\leq Cm{\rm dist}^{-1}(x,\partial\Omega),\quad x\in\Omega.
\end{equation}
\end{theorem}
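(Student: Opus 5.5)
Part (ii) comes first; it is a Bernstein-type local bound in the scale-invariant case. Part (i) then follows from (ii) by the rescaling and doubling method of Pol\'{a}\v{c}ik, Quittner and Souplet.

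\emph{Step 1: part (ii).} Take the auxiliary transformation $u=-f(v)$ with $f$ chosen so that the bound $u\le m$ enters linearly. A natural choice is $v=m^{-1}u$ perturbed so that the coefficient $f''/(f')^2\,u$ in \eqref{u-alpha-Lu-alpha} has a favourable sign; I would try $f(v)=-m(1+v)^{-1}$ or similar. Put $w=|\nabla v|^2$ and work with $\mathcal L(w)$, or with $\mathcal L_\alpha(u^\alpha w)$ for a small $\alpha<0$. The terms to control are:
\begin{itemize}
\item $u^{p-1}w$, coming from the source $u^p$;
\item $w^{(q+2)/2}$ with $q=2p/(p+1)<2$, coming from the gradient source;
\item the good terms $-c\,w^2$, coming from $(f''/f')'$, and $-2|D^2v|^2\le -\tfrac2N(\Delta v)^2$.
\end{itemize}
The hypothesis $M\ge M_0$ is used in the last of these. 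Expanding $(\Delta v)^2$ via the equation produces a cross term $2u^pM|\nabla u|^q$ with a good sign, and a square $M^2|\nabla u|^{2q}$. By Young's inequality with the critical exponents, $u^{p}|\nabla u|^{q}$ dominates $u^{p-1}|\nabla u|^2$ exactly when $q=2p/(p+1)$. The explicit constant $M_0$ in \eqref{defi-M0} should be the threshold at which this Young inequality closes with the factor $1/N$ (or $1/(3N)$ after splitting). The result should be
\[
\mathcal L(w)\le -c\,m^{-2}w^2 \quad \text{wherever } w \text{ is large.}
\]
I then multiply by a cut-off $\eta$ supported in $B_R$ with $|\nabla\eta|^2/\eta+|\Delta\eta|\le CR^{-2}$. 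The drift term $\mathcal H\cdot\nabla\eta$ contains $|\nabla v|^{q-1}$, which I handle by Young's inequality since $q<2$. Applying the maximum principle on the superlevel set $\{w\eta>C\}$ gives $w\le Cm^2R^{-2}$, which is \eqref{u<m-esti}.

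\emph{Step 2: part (i).} I argue by contradiction with the doubling lemma. Let
\[
\mathcal M(x)=u(x)^{\frac{p-1}{2}}+|\nabla u(x)|^{\frac{p-1}{p+1}},
\]
and suppose $\mathcal M(y_k)\,\mathrm{dist}(y_k,\partial\Omega_k)>2k$ for solutions $u_k$. The doubling lemma yields points $x_k$ with $\mathcal M\le 2\mathcal M(x_k)$ on $B_{k/\mathcal M(x_k)}(x_k)$. Rescale by $\lambda_k=\mathcal M(x_k)^{-1}$, setting $\tilde u_k(y)=\lambda_k^{2/(p-1)}u_k(x_k+\lambda_k y)$. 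Because $q$ is critical, $\tilde u_k$ solves the same equation \eqref{eq1} with the same $M$. Moreover $\tilde u_k^{(p-1)/2}+|\nabla\tilde u_k|^{(p-1)/(p+1)}\le 2$ on $B_k$, with value $1$ at the origin.

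By elliptic $C^{2,\beta}$ estimates, a subsequence converges in $C^2_{\rm loc}$ to a nonnegative solution $U$ on $\mathbb R^N$ that is bounded with bounded gradient, and $U^{(p-1)/2}(0)+|\nabla U(0)|^{(p-1)/(p+1)}=1$.

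There are two cases. If $U>0$, part (ii) applied on balls $B_R$ with $R\to\infty$ gives $\nabla U\equiv0$. Then $0=-\Delta U=U^p>0$, a contradiction. Otherwise $U$ vanishes somewhere; by the strong maximum principle applied to $-\Delta U\ge0$, $U\equiv0$, which contradicts the normalization at $0$ (the gradient also vanishes). The Liouville statement follows by letting $\mathrm{dist}\to\infty$ in \eqref{uni-esti}.

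\emph{Main obstacle.} The hard step is Step 1: finding $f$, and possibly $\alpha$ and $\gamma$, such that all mixed terms are absorbed with a constant matching $M_0$. In particular, the cut-off and drift terms must not reintroduce a power of $w$ larger than $2$. If the pure $w$ approach fails, I would switch to $z=u^\alpha w^\gamma\eta$ with $\gamma>1$ and use the extra $-\gamma(\gamma-1)w^{\gamma-2}|\nabla w|^2$ term to absorb the mixed terms.
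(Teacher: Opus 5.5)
Your architecture is the paper's. You prove (ii) by a Bernstein argument for $w=|\nabla v|^2$ with an $m$-dependent change of variables $u=-f(v)$, deduce nonexistence of bounded entire solutions, and get (i) by doubling, rescaling and compactness. Your Step 2 is essentially the paper's argument and is fine. The gap is Step 1, which carries all the content and, as proposed, does not close.

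\textbf{The auxiliary function.} Your candidate $f(v)=-m(1+v)^{-1}$ does make $f''u/(f')^2=-2$ negative, but it is concave: $f''/f'=-2(1+v)^{-1}$ and $(f''/f')'=2(1+v)^{-2}>0$.
\begin{itemize}
\item The term $2(f''/f')'w^2$ in \eqref{eq_w}, which you count as the good $-cw^2$, is actually $+4(1+v)^{-2}w^2$.
\item The only compensating contribution is $-\frac2N(f''/f')^2w^2=-\frac8N(1+v)^{-2}w^2$ from $(\Delta v)^2$. It is no larger in absolute value, and it shrinks once part of $|D^2v|^2$ is spent on $\langle\nabla w,\nabla\eta\rangle$.
\item So there is no coercive $w^2$ term at all; for $N\ge3$ the net $w^2$ coefficient is even positive.
\item The remaining coercive terms have order $w^q$ or $w^{(q+2)/2}$, both below $w^2$. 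They cannot absorb a positive $w^2$, nor absorb the cut-off terms at the rate $R^{-1}$ required in \eqref{u<m-esti}.
\item Moreover $v=m/u-1$ ranges over $[0,\infty)$, and $f'=m(1+v)^{-2}$ degenerates as $u\to0$, so the coefficients are not uniform.
\end{itemize}
What is needed is an increasing convex $f$ on a bounded interval satisfying $(f''/f')'+\frac2N(f''/f')^2=0$. The paper takes $f(s)=m(s+1)^\beta-2m$ on $[0,2^{1/\beta}-1)$ with $\beta=(N+2)/2$.
\begin{itemize}
\item The identity cancels the $\frac4N(f''/f')^2w^2$ created by Young-splitting the cross term $\frac{2M}{N}|f'|^{q-2}f''w^{(q+2)/2}$.
\item What remains is $-\frac1N(f''/f')^2w^2\le -C_2w^2$, plus a good-sign term $-C_1Mm^{q-1}w^{(q+2)/2}$ (since $f''>0$ and $q>1$).
\item One also has $m\le f'\le 2\beta m$ and $f''u/(f')^2\le(\beta-1)/\beta$.
\end{itemize}
The output is $w\le CR^{-2}$, hence $|\nabla u|=f'|\nabla v|\le CmR^{-1}$. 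Your normalization $\mathcal L(w)\le -cm^{-2}w^2$, $w\le Cm^2R^{-2}$ does not produce this.

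\textbf{The threshold $M_0$.} The cross term $u^p|\nabla u|^q$ alone does not dominate $u^{p-1}|\nabla u|^2$, since their ratio $u^{-1}|\nabla u|^{2-q}$ is unbounded. Criticality only lets you interpolate $u^{p-1}|\nabla u|^2$ between the squares, or between the cross term and $|\nabla u|^{2q}$.

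More importantly, $M_0$ is not a universal Young constant; it comes from the specific choices above. The paper uses three ingredients:
\begin{itemize}
\item $B=\frac{u^p}{\beta m}+Mm^{q-1}w^{q/2}$, which comes from $m\le f'\le 2\beta m$;
\item the bound $6p$ on the coefficient of $u^{p-1}w$;
\item the factor $\frac1{4N}$ left after the splitting.
\end{itemize}
It then writes $6pu^{p-1}w-\frac1{4N}B^2=\Phi(w)\bigl(\sqrt{6p}\,u^{(p-1)/2}w^{1/2}+(4N)^{-1/2}B\bigr)$ with $\Phi(w)=\sqrt{6p}\,u^{(p-1)/2}w^{1/2}-(4N)^{-1/2}B$, and maximizes $\Phi$ in $w$. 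The maximum is a constant times $u^p/m$, and that constant is $\le 0$ exactly when $M\ge M_0$. The factor $2^{-\frac{p-1}{p+1}}(N+2)^{\frac{p-1}{p+1}}$ in \eqref{defi-M0} is just $\beta^{\frac{p-1}{p+1}}$.

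Even once you find a working $f$, a generic Young splitting yields some threshold $M_1(N,p)$ with no reason to have $M_1\le M_0$. You would then establish (ii), and through Step 2 also (i), only for $M\ge M_1$, not for all $M\ge M_0$ as stated.
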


Estimates \eqref{uni-esti} is called universal in the sense that the constant $C$ does not depend on solutions and the domain $\Omega$. Following ideas from \cite {Polacik-Quittner-Souplet,Souplet-DCDS}, we shall use Liouville-type theorems to obtain universal estimates. To this end, we first prove the nonexistence of bounded solutions to \eqref{eq1} by local gradient estimates in Corollary \ref{corol-u-bound}. Next, using the doubling arguments and limiting procedures, we derive the universal estimates \eqref{uni-esti}.

It is remarkable that estimates \eqref{uni-esti} is a complement of that obtained in \cite[Theorem C]{Veron-MathAnn-20}. Actually, for $x$ in a neighborhood of $\partial \Omega$, there exists $x_0\in\partial\Omega$ such that $\delta(x):={\rm dist}(x,\partial\Omega)=|x-x_0|$. We denote $\nu(x_0)$ the inward unit normal vector at $x_0\in\partial\Omega$. Then
\begin{equation*}
  u(x)-u(x_0)=\int_0^{\delta(x)}\nabla u(x_0+s\nu(x_0))\cdot\nu(x_0){\rm d}s\leq C\int_0^{\delta(x)}s^{-\frac{p+1}{p-1}}{\rm d}s,
\end{equation*}
where $C>0$ and the integral on the right-hand side of the inequality is an improper integral due to $(p+1)/(p-1)>1$. Additionally, we point out that, concerning the gradient blow-up behavior of solutions in a bounded domain, estimates \eqref{u<m-esti} is more precise than \eqref{uni-esti}.

Finally, we intend to establish an universal estimates for the following superlinear elliptic equation with general nonlinearities
\begin{equation}\label{eq-f+g}
  -\Delta u=f(u)+g(\nabla u),
\end{equation}
where $f$ and $g$ are continuous functions satisfying suitable conditions without scale invariance.

\begin{theorem}\label{them:f-g}
Let $\Omega$ be an arbitrary domain of $\mathbb R^N$. Assume $f\in C([0,\infty))$ and  $g\in C(\mathbb R^N)$ satisfy
\begin{equation}\label{assum-f}
  \lim_{s\to \infty}s^{-p}f(s)=l>0,
\end{equation}
and
\begin{equation}\label{assum-g}
  \quad \lim_{s\to \infty}|s\xi|^{-\frac{2p}{p+1}}g\left(s\xi\right)=\mu>0\quad \text{for}\ \xi\neq0,
\end{equation}
respectively.

{\rm (i)} Suppose $1<p<{(N+3)/(N-1)}$. Then there exists $C=C(N,p,f,g)>0$ such that the nonnegative solution to \eqref{eq-f+g} in $\Omega$ satisfies
\begin{equation}\label{uni-f+g}
u(x)+|\nabla u(x)|^{\frac{2}{p+1}}\leq C\left(1+{\rm dist}^{-\frac2{p-1}}(x,\partial\Omega)\right),\quad x\in\Omega.
\end{equation}

{\rm (ii)} Suppose $p\geq (N+3)/(N-1)$. If $\mu l^{-1/(p+1)}\geq M_0$, where $M_0$ is defined in \eqref{defi-M0},  then the nonnegative solution of \eqref{eq-f+g} in $\Omega$ satisfies \eqref{uni-f+g}.
\end{theorem}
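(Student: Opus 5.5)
We argue by contradiction, following the rescaling and doubling scheme of Pol\'{a}\v{c}ik--Quittner--Souplet. Suppose \eqref{uni-f+g} fails. Then there exist domains $\Omega_k$, nonnegative solutions $u_k$ of \eqref{eq-f+g} in $\Omega_k$, and points $y_k\in\Omega_k$ such that
\begin{equation*}
  Q_k(y_k) > 2k\bigl(1+{\rm dist}^{-1}(y_k,\partial\Omega_k)\bigr),
  \qquad Q_k:=u_k^{\frac{p-1}{2}}+|\nabla u_k|^{\frac{p-1}{p+1}} .
\end{equation*}
The exponents in $Q_k$ are chosen so that $Q_k$ scales like an inverse length under the natural scaling of \eqref{eq1} with $q=2p/(p+1)$.

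We apply the doubling lemma to $Q_k$ on $\Omega_k$, with $\Gamma_k=\partial\Omega_k$. It produces points $x_k$ with $Q_k(x_k)\ge Q_k(y_k)$, $Q_k(x_k)>2k\,{\rm dist}^{-1}(x_k,\partial\Omega_k)$, and
\begin{equation*}
  Q_k\le 2Q_k(x_k)\quad\text{in } B_{k/Q_k(x_k)}(x_k).
\end{equation*}
Set $\lambda_k=Q_k(x_k)^{-1}\to0$ and
\begin{equation*}
  v_k(y)=\lambda_k^{\frac2{p-1}}u_k(x_k+\lambda_k y),\qquad |y|<k .
\end{equation*}
Then $v_k^{(p-1)/2}+|\nabla v_k|^{(p-1)/(p+1)}$ equals $1$ at $0$ and is at most $2$ on $B_k$. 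The function $v_k$ solves
\begin{equation*}
  -\Delta v_k=\lambda_k^{\frac{2p}{p-1}}f\bigl(\lambda_k^{-\frac2{p-1}}v_k\bigr)+\lambda_k^{\frac{2p}{p-1}}g\bigl(\lambda_k^{-\frac{p+1}{p-1}}\nabla v_k\bigr)=:F_k(v_k)+G_k(\nabla v_k).
\end{equation*}
By \eqref{assum-f} and \eqref{assum-g}, together with continuity of $f,g$ (which bounds them on compact sets), $F_k(s)\to ls^p$ and $G_k(\xi)\to\mu|\xi|^{2p/(p+1)}$ locally uniformly. On the set where $v_k$ or $\nabla v_k$ is small, the error is controlled by $\lambda_k^{2p/(p-1)}\sup_{\text{compact}}(|f|+|g|)\to0$. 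This is exactly why the constant $1$ is needed in \eqref{uni-f+g}. The right-hand sides are therefore uniformly bounded on $B_k$. Elliptic $W^{2,r}$ estimates give $C^{1,\beta}_{\rm loc}$ bounds, and a diagonal subsequence converges in $C^1_{\rm loc}(\mathbb R^N)$ to a weak, hence classical, solution $v\ge0$ of
\begin{equation*}
  -\Delta v=lv^p+\mu|\nabla v|^{\frac{2p}{p+1}}\quad\text{in } \mathbb R^N .
\end{equation*}
The limit satisfies $v^{(p-1)/2}+|\nabla v|^{(p-1)/(p+1)}=1$ at $0$ and $v\le 2^{2/(p-1)}$. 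We normalize via $\tilde v=l^{1/(p-1)}v$. Because the equation is critical, this turns it into \eqref{eq1} with $q=2p/(p+1)$ and $M=\mu l^{-1/(p+1)}$. Since $\tilde v$ is nonconstant or positive at $0$, the strong maximum principle ($-\Delta\tilde v\ge0$) gives $\tilde v>0$ unless $\tilde v\equiv0$, which is excluded at $0$. So $\tilde v$ is a bounded positive solution in $\mathbb R^N$.

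To reach a contradiction we must rule out bounded positive solutions. In case (i), $1<p<(N+3)/(N-1)$ gives $q=2p/(p+1)<(N+3)/(N+1)<(N+1)/(N-1)$, so $(p,q)\in G_3$, and Theorem \ref{them:p,q<n+3,n+1} excludes any positive solution in $\mathbb R^N$ for every $M>0$. In case (ii), $(p,q)\in G_6$ and $M=\mu l^{-1/(p+1)}\ge M_0$, so Theorem \ref{them:uni-esti} (or its Liouville statement for bounded solutions, Corollary \ref{corol-u-bound}) gives the contradiction.

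The main obstacle is the passage to the limit. We must check that the convergence of $F_k,G_k$ is locally uniform down to $s=0$ and $\xi=0$. We must also check that the limit of the possibly only nonnegative $u_k$ is a genuine positive solution, which is where the normalization at $0$ and the strong maximum principle are used. The rest is the standard doubling lemma.
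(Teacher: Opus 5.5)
Your proposal is correct and follows essentially the same route as the paper's proof. Both apply the doubling lemma to $u^{(p-1)/2}+|\nabla u|^{(p-1)/(p+1)}$, with the extra constant $1$ forcing $\lambda_k\to0$. Both then rescale and use $C^1_{\rm loc}$ compactness to obtain a bounded positive solution of $-\Delta v=lv^p+\mu|\nabla v|^{2p/(p+1)}$, and normalize via $l^{1/(p-1)}v$ to get $M=\mu l^{-1/(p+1)}$. The contradiction then comes from Theorem~\ref{them:p,q<n+3,n+1} in case (i), since $q<(N+3)/(N+1)<(N+1)/(N-1)$, and from Corollary~\ref{corol-u-bound} in case (ii).

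One caveat, which the paper's own argument shares, concerns your claim that $G_k(\xi)\to\mu|\xi|^{2p/(p+1)}$ locally uniformly, and even the uniform bound on $G_k(\nabla v_k)$. Both need \eqref{assum-g} in the uniform form $|\eta|^{-2p/(p+1)}g(\eta)\to\mu$ as $|\eta|\to\infty$; a limit along each fixed ray alone does not give them.
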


We note that Theorem \ref{them:f-g} completes the result in \cite[Theorem 6.1]{Polacik-Quittner-Souplet}, where universal estimates were obtained  when $g(\nabla u)$ has subcritical power growth and $p\in (1,p_S)$, while our results occur in the critical case with $p>1$. Typical examples of $f$ and $g$ satisfying requirements in Theorem \ref{them:f-g} are given by $f(u)=\sum_{i=1}^ja_iu^{p_i}+lu^p$ and $g(\nabla u)=\sum_{i=1}^kb_i|\nabla u|^{q_i}+\mu|\nabla u|^{2p/(p+1)}$, respectively, where $j,k\in \mathbb R^+$, $a_i,b_i\in\mathbb R$, $0\leq p_i<p(p>1)$, and $0\leq q_i<2p/(p+1)$.

The paper is organized as follows. In Section \ref{sec:2}, we establish differential inequalities for solutions to \eqref{eq1} and the maximum principle. In Section \ref{sec:aux-funt-&-est}, we provide local gradient estimates by skillfully choosing auxiliary functions and then derive  Liouville-type theorems. In Section \ref{sec:uni-est}, as applications of Liouville-type theorems, we use doubling arguments to achieve the universal estimates for elliptic equations with general nonlinearities.

\section{Elliptic differential operators and inequalities}\label{sec:2}

In this section, we establish differential inequalities involving undetermined auxiliary functions for solutions to \eqref{eq1}, which play a key role in deriving the local gradient estimates.

Let $a\in(0,1)$ be chosen later, $x_0\in\Omega$ be fixed, and  $R={\rm dist}(x_0,\partial\Omega)$. Set $R^{\prime}=3 R/4$. We select a cut-off function $\eta \in C^2\left(\overline{B}_{R}\right)$, $0 \leq \eta \leq 1$, satisfying $\eta=1$ for $|x-x_0|\leq R/2$, $\eta=0$ for $\left|x-x_0\right|\geq R^{\prime}$ and such that
\begin{equation}\label{dfi-eta}
\left.\begin{array}{rl}
|\nabla \eta| & \leq C R^{-1} \eta^a \\
\left|D^2 \eta\right|+\eta^{-1}|\nabla \eta|^2 & \leq C R^{-2} \eta^a
\end{array}\right\} \text { for }\left|x-x_0\right|<R^{\prime},
\end{equation}
where $C=C(a)>0$ and $B_R=B_R(x_0)$. Indeed, such a function is given in \cite{Souplet-Zhang} as $\eta=\rho^k$, where $\rho(x)=1-R'^{-2}|x-x_0|^2$, $x\in B_R$ and $k\geq2/(1-a)$.

Let $f$ be a $C^3$ function with $f'\neq0$. We set
\begin{equation}\label{def-v}
v=f^{-1}(-u),\quad w=|\nabla v|^2.
\end{equation}
By a direct computation, $v$ satisfies
\begin{equation}\label{v-w}
  -\Delta v=-\frac{(-f)^p}{f'}-M\frac{|f'|^q}{f'}w^{\frac q2}+\frac{f''}{f'}w.
\end{equation}
For convenience, the variable $v$ of $f$, $f'$ and $f''$  is omitted here and hereafter.  By the elliptic regularity theory, we know that the solution $u$ is actually smooth away from $\Omega_0$ and hence we can differentiate the equation.
Then for $w>0$, due to Bochner's identity
\begin{equation*}
  \Delta w=2\langle\nabla \Delta v,\nabla v\rangle+2|D^2v|^2,
\end{equation*}
we obtain
\begin{align}\label{eq_w}\nonumber
-\Delta w=&2p(-f)^{p-1}w+2\frac{f''}{(f')^2}(-f)^pw+2\left(\frac{f''}{f'}\right)'w^2+2\frac{f''}{f'}\langle\nabla w,\nabla v\rangle\\
&-2M(q-1)|f'|^{q-2}f''w^{\frac{q+2}{2}}-M q\frac{|f'|^q}{f'}w^{\frac{q-2}{2}}\langle \nabla w,\nabla v\rangle-2|D^2v|^2,
\end{align}
where $|D^2 v|^2=\sum_{i,j=1}^N(v_{ij})^2$. We consider the operator $\mathcal L_\alpha$  defined as
\begin{equation}\label{Lalpha-2}
  \mathcal L_\alpha(z)=-\Delta z+\mathcal H_\alpha\cdot\nabla z,
\end{equation}
where
\begin{equation}\label{defin-H}
\mathcal H_\alpha= \left[qM\frac{|f'|^q}{f'}w^{\frac{q-2}{2}}-2\frac{f''}{f'}+2\alpha\frac{f'}f\right]\nabla v.
\end{equation}
Setting $\alpha=0$ in \eqref{Lalpha-2}, then \eqref{eq_w} can be written as
\begin{equation*}
\mathcal L(w):=\mathcal L_0(w)=-2|D^2v|^2+\mathcal N(w),
\end{equation*}
where
\begin{equation*}
  \mathcal N(w):=2p(-f)^{p-1}w+2\frac{f''}{(f')^2}(-f)^pw+2\left(\frac{f''}{f'}\right)'w^2-2(q-1)M|f'|^{q-2}f''w^{\frac{q+2}2}.
\end{equation*}

Now we present a local differential inequality of $u^\alpha w^\gamma$, which will be used frequently throughout this article.

\begin{lemma}\label{lem:L(wueta)}
Assume that $u$ is a positive solution of \eqref{eq1} in $\Omega$. Then for any $\alpha\in\mathbb R$ and $\gamma\geq 1$,
\begin{align}\label{ineq-alpha-gamma}\nonumber
& u^{1-\alpha}w^{1-\gamma}\mathcal L_\alpha(u^\alpha w^\gamma\eta)\\\nonumber
\leq&\gamma u\left[-(\gamma-1)w^{-1}|\nabla w|^2-2|D^2v|^2+\left(2p+2\frac{f''u}{(f')^2}+\frac{\alpha}\gamma\right)u^{p-1}w+2\left(\frac{f''}{f'}\right)'w^2\right]\eta\\
&-\gamma M(q-1)\left(2 |f'|^{q-2}f''u+\frac{\alpha}{\gamma}|f'|^q\right)w^{\frac{q+2}2}\eta+\left(\alpha(\alpha+1)u^{-1}(f')^2+2\alpha f''\right)w^2\eta\\\nonumber
&-2\gamma u\langle \nabla w,\nabla\eta\rangle +uw\left(|q|M|f'|^{q-1}w^{\frac{q-1}2}+2\left|\frac{f''}{f'}\right|w^{\frac12}\right)|\nabla\eta|+\sqrt N uw|D^2\eta|
\end{align}
holds in $\{x\in B_{R'};\,w(x)>0\}$.
\end{lemma}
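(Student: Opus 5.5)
The plan is a direct computation. I expand $\mathcal L_\alpha(u^\alpha w^\gamma\eta)$ by the product rule, use \eqref{eq_w} for $-\Delta w$ and \eqref{eq1} (equivalently \eqref{v-w}) for $-\Delta u$, and then estimate the cross terms. Throughout I use $u=-f$, $\nabla u=-f'\nabla v$ and $|\nabla u|^2=(f')^2w$.

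\textbf{Step 1: the three separate pieces.} For the $u^\alpha$ factor, $-\Delta u^\alpha=-\alpha(\alpha-1)u^{\alpha-2}(f')^2w+\alpha u^{\alpha-1}(u^p+M|f'|^qw^{q/2})$. For the $w^\gamma$ factor, $-\Delta w^\gamma=-\gamma(\gamma-1)w^{\gamma-2}|\nabla w|^2+\gamma w^{\gamma-1}(-\Delta w)$, and I insert \eqref{eq_w} here. The $\eta$ factor is left alone.

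\textbf{Step 2: the product rule and the drift $\mathcal H_\alpha$.} I write
\begin{align*}
-\Delta(abc)=&-bc\Delta a-ac\Delta b-ab\Delta c-2c\nabla a\cdot\nabla b\\
&-2b\nabla a\cdot\nabla c-2a\nabla b\cdot\nabla c
\end{align*}
with $a=u^\alpha$, $b=w^\gamma$, $c=\eta$. The drift part of $\mathcal H_\alpha$, namely $qM\frac{|f'|^q}{f'}w^{\frac{q-2}2}-2\frac{f''}{f'}$, cancels the $\langle\nabla w,\nabla v\rangle$ terms coming from \eqref{eq_w}, exactly as $\mathcal L_0(w)=-2|D^2v|^2+\mathcal N(w)$. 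The extra drift $2\alpha\frac{f'}{f}\nabla v$ equals $-2\alpha u^{-1}f'\nabla v=2\alpha u^{-1}\nabla u$ and is designed to absorb the cross term $-2\eta\nabla u^\alpha\cdot\nabla w^\gamma$.

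To check this, note that $\mathcal H_\alpha\cdot\nabla(u^\alpha w^\gamma\eta)$ contains $2\alpha u^{-1}\nabla u\cdot\nabla w^\gamma\,u^\alpha\eta=2\eta\nabla u^\alpha\cdot\nabla w^\gamma$, which cancels. It also produces $2\alpha u^{-1}|\nabla u|^2\alpha u^{\alpha-1}w^\gamma\eta=2\alpha^2u^{\alpha-2}(f')^2w^{\gamma+1}\eta$. Combined with $-\alpha(\alpha-1)u^{\alpha-2}(f')^2w^{\gamma+1}\eta$ this gives $\alpha(\alpha+1)u^{\alpha-2}(f')^2w^{\gamma+1}\eta$, matching the stated coefficient after multiplying by $u^{1-\alpha}w^{1-\gamma}$.

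\textbf{Step 3: the remaining drift terms hitting $u^\alpha$.} The other two drift components hitting $u^\alpha$ give $u^{\alpha-1}\alpha(-f')w^{\gamma}\eta\big(qM|f'|^q/f'\,w^{q/2}-2f''w/f'\big)$. These combine with the $\alpha Mu^{\alpha-1}|f'|^qw^{q/2}$ term from Step 1 to give the coefficient $-(q-1)\alpha|f'|^q$, and with $2\alpha f''w^2$. Both match the statement.

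\textbf{Step 4: terms involving $\nabla\eta$ and $D^2\eta$.} The terms $-2b\nabla a\cdot\nabla c$ and $\mathcal H_\alpha$'s $\alpha$-part acting on $\eta$ should cancel, since both are proportional to $\alpha u^{\alpha-1}w^\gamma\nabla u\cdot\nabla\eta$ with opposite signs. Next, $-2u^\alpha\nabla w^\gamma\cdot\nabla\eta$ gives the term $-2\gamma u\langle\nabla w,\nabla\eta\rangle$ after normalization. The non-$\alpha$ drift acting on $\eta$ is bounded in absolute value by $uw\big(|q|M|f'|^{q-1}w^{(q-1)/2}+2|f''/f'|w^{1/2}\big)|\nabla\eta|$. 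Finally, $-u^\alpha w^\gamma\Delta\eta\le\sqrt N u^\alpha w^\gamma|D^2\eta|$.

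\textbf{Step 5: assembling the main bracket.} The $u^p$ term from $\Delta u^\alpha$ gives $\alpha u^{p-1}w$, which becomes the $\frac{\alpha}{\gamma}u^{p-1}w$ inside the $\gamma u[\cdots]$ bracket. The terms from \eqref{eq_w} give $2pu^{p-1}w+2\frac{f''}{(f')^2}u^pw$, using $(-f)^p=u^p$. The same normalization accounts for the other entries in the bracket.

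The only real obstacle is bookkeeping: verifying that the $\alpha$-dependent drift exactly produces the cancellations described in Steps 2 and 4, and tracking signs of $f'$ in $|f'|^q/f'$. The inequality, rather than equality, arises only from the $\eta$-derivative bounds in Step 4.
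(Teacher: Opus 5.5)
Your proposal is correct and follows essentially the paper's route. You expand by the product rule and let the non-$\alpha$ part of $\mathcal H_\alpha$ cancel the $\langle\nabla w,\nabla v\rangle$ terms of \eqref{eq_w}. The part $2\alpha\frac{f'}{f}\nabla v=2\alpha u^{-1}\nabla u$ then kills both cross terms $-2\eta\nabla u^\alpha\cdot\nabla w^\gamma$ and $-2w^\gamma\nabla u^\alpha\cdot\nabla\eta$, leaving the coefficient $\alpha(\alpha+1)u^{-1}(f')^2w^2$. The only inequalities come from Cauchy--Schwarz on the remaining $\nabla\eta$ terms and from $-\Delta\eta\le\sqrt N|D^2\eta|$.

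Your bookkeeping matches the paper's identity \eqref{L-u-alpha-2} term by term. The paper simply organizes the $\alpha$-drift computation through $\langle\nabla v,\nabla(u^\alpha w^\gamma\eta)\rangle$ instead of writing it directly as $2\alpha u^{-1}\nabla u$.
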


\begin{proof}
By a direct computation,  we follow from \eqref{def-v} that
\begin{align}\label{Delta_u_alpha}\nonumber
  -\Delta(u^\alpha)=&-\alpha(\alpha-1)u^{\alpha-2}|\nabla u|^2+\alpha u^{\alpha-1}\left(u^p+M|\nabla u|^q\right)\\
  =&-\alpha(\alpha-1)u^{\alpha-2}(f')^2w+\alpha u^{\alpha-1}\left(u^p+M|f'|^qw^{\frac{q}2}\right).
\end{align}
From \eqref{eq_w},  we get
\begin{align}\label{Delta_w_gam}\nonumber
  &-\Delta(w^\gamma)\\
 =&-\gamma(\gamma-1)w^{\gamma-2}|\nabla w|^2-\gamma w^{\gamma-1}\Delta w\\\nonumber
 =&-\gamma(\gamma-1)w^{\gamma-2}|\nabla w|^2-2\gamma w^{\gamma-1}|D^2v|^2+\gamma w^{\gamma-1}\bigg(2p(-f)^{p-1}w+2\frac{f''}{(f')^2}(-f)^pw\\\nonumber
&+2\left(\frac{f''}{f'}\right)'w^2+\!2\frac{f''}{f'}\langle\nabla w,\nabla v\rangle-\!2 M(q-1)|f'|^{q-2}f''w^{\frac{q+2}{2}}-qM\frac{|f'|^q}{f'}w^{\frac{q-2}{2}}\langle\nabla w,\nabla v\rangle\bigg).
\end{align}
As for $u^\alpha w^\gamma\eta$, a calculation shows that
\begin{align}\label{ualpha-wgamma}
  &-\Delta(u^\alpha w^{\gamma}\eta)\\\nonumber
  =&-u^{\alpha}\Delta(w^\gamma)\eta-w^\gamma\Delta(u^\alpha)\eta-u^\alpha w^\gamma\Delta\eta-2\langle \nabla u^\alpha,\nabla w^\gamma\rangle\eta-2\langle\nabla(u^\alpha w^\gamma),\nabla\eta\rangle.
\end{align}
Substituting \eqref{Delta_u_alpha} and \eqref{Delta_w_gam} into \eqref{ualpha-wgamma}, we obtain
\begin{align}\label{gamma_u_alpha}\nonumber
  &-u^{1-\alpha}w^{1-\gamma}\Delta(u^\alpha w^\gamma\eta)\\\nonumber
  =&\gamma u\bigg[-(\gamma-1)w^{-1}|\nabla w|^2-2|D^2v|^2+2pu^{p-1}w+2\frac{f''}{(f')^2}u^pw+2\left(\frac{f''}{f'}\right)'w^2\\\nonumber
  &-2M(q-1)|f'|^{q-2}f''w^{\frac{q+2}2}\bigg]\eta+ w\left(-\alpha(\alpha-1)u^{-1}(f')^2w+\alpha u^p+\alpha M|f'|^qw^{\frac{q}2}\right)\eta\\
  &+ \left(-qM\frac{|f'|^q}{f'}w^{\frac{q-2}2}+2\frac{f''}{f'}\right) uw^{1-\gamma}\eta\langle\nabla w^\gamma,\nabla v\rangle-uw\Delta\eta\\\nonumber
  &-2u^{1-\alpha}w^{1-\gamma}\eta\langle\nabla u^{\alpha},\nabla w^{\gamma}\rangle-2 u^{1-\alpha}w^{1-\gamma}\langle\nabla(u^\alpha w^\gamma),\nabla\eta\rangle.
\end{align}

Recalling the definition of $\mathcal H_\alpha$, we have
\begin{align}\label{H-alpha-expres}\nonumber
  &u^{1-\alpha}w^{1-\gamma}\mathcal H_\alpha\cdot \nabla (u^\alpha w^\gamma \eta)\\
  =& u^{1-\alpha}w^{1-\gamma}\left(qM\frac{|f'|^q}{f'} w^{\frac{q-2}2}-2\frac{f''}{f'}+2\alpha \frac{f'}f\right)\nabla v\cdot\nabla(u^\alpha w^\gamma\eta)\\\nonumber
  =&\left(qM\frac{|f'|^q}{f'} w^{\frac{q-2}2}-2\frac{f''}{f'}\right)\Big[u^{1-\alpha}w\eta\langle\nabla v,\nabla u^\alpha\rangle+uw^{1-\gamma}\eta\langle\nabla v,\nabla w^\gamma\rangle+uw\langle\nabla v,\nabla \eta\rangle\Big]\\\nonumber
  &+2\alpha \frac{f'}f u^{1-\alpha} w^{1-\gamma}\langle\nabla v,\nabla(u^\alpha w^\gamma\eta)\rangle.
\end{align}
From $\nabla u=-f'\nabla v$, we obtain
\begin{align*}
  \langle\nabla v,\nabla u^\alpha\rangle=\alpha u^{\alpha-1}\langle\nabla v,\nabla u\rangle=-\alpha u^{\alpha-1}f'w,
\end{align*}
and hence
\begin{equation}\label{nabla_ualpha1*}
  u^{1-\alpha}w\eta\left(qM\frac{|f'|^q}{f'} w^{\frac{q-2}{2}}-2\frac{f''}{f'}\right)\langle\nabla v,\nabla u^\alpha\rangle=-\alpha qM|f'|^qw^{\frac{q+2}2}\eta+2\alpha f''w^2\eta.
\end{equation}
After some computations we get
\begin{align*}
2\langle\nabla u^\alpha,\nabla w^\gamma\rangle\eta=&2\alpha\frac{f'}{f}\langle\nabla v,u^\alpha\eta\nabla w^\gamma\rangle\\
=&2\alpha\frac{f'}f\big\langle\nabla v,\nabla(u^\alpha w^\gamma\eta)-w^\gamma\eta\nabla u^\alpha-u^\alpha w^\gamma\nabla\eta\big\rangle\\\nonumber
=&2\alpha\frac{f'}f\langle\nabla v,\nabla(u^\alpha w^\gamma\eta)\rangle+2\alpha^2\frac{(f')^2}{f}u^{\alpha-1}w^{\gamma+1}\eta-2\alpha\frac{f'}fu^\alpha w^\gamma\langle\nabla v,\nabla\eta\rangle,
\end{align*}
which means
\begin{align}\label{**2alp-ff}\nonumber
  &2\alpha\frac{f'}fu^{1-\alpha}w^{1-\gamma}\langle\nabla v,\nabla(u^\alpha w^\gamma\eta)\rangle\\
  =&2u^{1-\alpha}w^{1-\gamma}\langle\nabla u^\alpha,\nabla w^\gamma\rangle\eta-2\alpha^2\frac{(f')^2}{f}w^{2}\eta+2\alpha\frac{f'}fu w\langle\nabla v,\nabla\eta\rangle.
\end{align}
Substituting \eqref{nabla_ualpha1*} and \eqref{**2alp-ff} into \eqref{H-alpha-expres}, it leads to
\begin{align}\label{uwH-alpha}\nonumber
 &u^{1-\alpha}w^{1-\gamma}\mathcal H_\alpha\cdot \nabla (u^\alpha w^\gamma \eta)\\
 =&-\alpha qM|f'|^qw^{\frac{q+2}2}\eta+2\alpha f''w^2\eta+2u^{1-\alpha}w^{1-\gamma}\eta\langle\nabla u^\alpha,\nabla w^\gamma\rangle-2\alpha^2\frac{(f')^2}{f}w^{2}\eta\\\nonumber
 &+2\alpha\frac{f'}fu w\langle\nabla v,\nabla\eta\rangle+\left(qM\frac{|f'|^q}{f'} w^{\frac{q-2}2}-2\frac{f''}{f'}\right)\Big[uw^{1-\gamma}\eta\langle\nabla v,\nabla w^\gamma\rangle+uw\langle\nabla v,\nabla \eta\rangle\Big]
\end{align}
It follows from \eqref{Lalpha-2}, \eqref{gamma_u_alpha} and \eqref{uwH-alpha} that
\begin{align}\label{L-u-alpha-2}\nonumber
&u^{1-\alpha}w^{1-\gamma}\mathcal L_\alpha(u^\alpha w^\gamma\eta)\\\nonumber
=&\gamma u\left[-(\gamma-1)w^{-1}|\nabla w|^2-2|D^2v|^2+\left(2p+2\frac{f''u}{(f')^2}+\frac{\alpha}\gamma\right)u^{p-1}w+2 \left(\frac{f''}{f'}\right)'w^2\right]\eta\\
&-\gamma M(q-1)\left(2 |f'|^{q-2}f''u+\frac{\alpha}{\gamma}|f'|^q\right)w^{\frac{q+2}2}\eta-uw\Delta\eta\\\nonumber
&+\left(-\alpha(\alpha-1)u^{-1}(f')^2+2\alpha f''-2\alpha^2\frac{(f')^2}f\right)w^2\eta+2\alpha\frac{f'}fu w\langle\nabla v,\nabla\eta\rangle\\\nonumber
&-2u^{1-\alpha}w^{1-\gamma}\langle\nabla(u^\alpha w^\gamma),\nabla\eta\rangle+uw\Big(qM\frac{|f'|^q}{f'}w^{\frac{q-2}2}-2\frac{f''}{f'}\Big)\langle\nabla v,\nabla\eta\rangle.
\end{align}

We now consider the terms containing $\nabla\eta$ and $\Delta\eta$. We notice that
\begin{align}\label{nabla-v,nabla-eta}\nonumber
-2\langle\nabla(u^\alpha w^\gamma),\nabla\eta\rangle=&-2w^\gamma\langle\nabla u^\alpha,\nabla\eta\rangle-2u^\alpha \langle\nabla w^\gamma,\nabla\eta\rangle\\
=&2\alpha u^{\alpha-1}w^\gamma f'\langle\nabla v,\nabla\eta\rangle-2\gamma u^\alpha w^{\gamma-1}\langle\nabla w,\nabla\eta\rangle\\\nonumber
=&-2\alpha \frac{f'}{f}u^\alpha w^\gamma\langle\nabla v,\nabla\eta\rangle-2\gamma u^\alpha w^{\gamma-1}\langle\nabla w,\nabla\eta\rangle.
\end{align}
From Cauchy--Schwarz's inequality, we have
\begin{equation}\label{nabla-eta-1}
\left(qM\frac{|f'|^q}{f'}w^{\frac{q-2}2}-2\frac{f''}{f'}\right)\langle\nabla v,\nabla\eta\rangle\leq \left(|q|M|f'|^{q-1}w^{\frac{q-1}2}+2\left|\frac{f''}{f'}\right|w^{\frac12}\right)|\nabla\eta|.
\end{equation}
In addition, there holds
\begin{align}\label{1/N}
-\Delta\eta\leq \sqrt N|D^2\eta|.
\end{align}
Substituting \eqref{nabla-v,nabla-eta}--\eqref{1/N} into \eqref{L-u-alpha-2}, we obtain
\begin{align*}\nonumber
& u^{1-\alpha}w^{1-\gamma}\mathcal L_\alpha(u^\alpha w^\gamma\eta)\\\nonumber
\leq&\gamma u\left[-(\gamma-1)w^{-1}|\nabla w|^2-2|D^2v|^2+\left(2p+2\frac{f''u}{(f')^2}+\frac{\alpha}\gamma\right)u^{p-1}w+2\left(\frac{f''}{f'}\right)'w^2\right]\eta\\
&-\gamma M(q-1)\left(2 |f'|^{q-2}f''u+\frac{\alpha}{\gamma}|f'|^q\right)w^{\frac{q+2}2}\eta+\left(\alpha(\alpha+1)u^{-1}(f')^2+2\alpha f''\right)w^2\eta\\\nonumber
&-2\gamma u\langle \nabla w,\nabla\eta\rangle +uw\left(|q|M|f'|^{q-1}w^{\frac{q-1}2}+2\left|\frac{f''}{f'}\right|w^{\frac12}\right)|\nabla\eta|+\sqrt N uw|D^2\eta|.
\end{align*}
Thus, we complete the proof of this lemma.
\end{proof}

Furthermore, we present the estimates of $\mathcal L(w^\gamma\eta):=\mathcal L_0(w^\gamma\eta)$ under a specific local orthonormal frame.

\begin{lemma}\label{lem:e1}
Assume that $u$ is a positive solution of \eqref{eq1} in $\Omega$. Then for any $\gamma\geq1$, there exists a constant $C=C(N,q,\gamma)>0$ such that
\begin{align*}
  &w^{1-\gamma}\mathcal L(w^{\gamma}\eta)\\
  \leq &\gamma\left(-2(\gamma-1)+\frac{2(1-N+3\gamma^{1/2})}{N-1}\right)v^2_{11}\eta+2\gamma\left(p+\frac{N+1}{N-1}\frac{f''u}{(f')^2}\right)u^{p-1}w\eta\\
  &+2\gamma \left(\frac{f''}{f'}\right)'w^2\eta-2\gamma M\left(q-\frac{N+1}{N-1}\right)|f'|^{q-2}f''w^{\frac{q+2}2}\eta-\frac{\gamma (1-2\gamma^{-1/2})}{N-1}M^2|f'|^{2q-2}w^q\eta\\
  &+\bigg[-\frac{2\gamma(1-\gamma^{-1/2})}{N-1}\left(\frac{f''}{f'}\right)^2+3\bigg]w^2\eta-\frac{2\gamma(1-\gamma^{-1/2})}{N-1}\frac{(-f)^{2p}}{(f')^2}\eta\\
  &+2\left|\frac{f''}{f'}\right|w^{\frac32}|\nabla\eta|+C\left(\eta^{-3}|\nabla\eta|^4+\eta^{-1}|D^2\eta|^2\right)
\end{align*}
holds in $\{x\in B_{R'};\,w(x)>0\}$.
\end{lemma}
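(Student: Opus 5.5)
Take $\alpha=0$ in Lemma \ref{lem:L(wueta)}, or equivalently expand $\mathcal L(w^\gamma\eta)$ directly from \eqref{eq_w}. This gives
\begin{align*}
w^{1-\gamma}\mathcal L(w^\gamma\eta)=&\gamma\Big[-(\gamma-1)w^{-1}|\nabla w|^2-2|D^2v|^2+\mathcal N(w)\Big]\eta-w\Delta\eta\\
&-2\gamma\langle\nabla w,\nabla\eta\rangle+w\Big(qM\tfrac{|f'|^q}{f'}w^{\frac{q-2}2}-2\tfrac{f''}{f'}\Big)\langle\nabla v,\nabla\eta\rangle.
\end{align*}
Work at a point where $w>0$ in a local orthonormal frame with $e_1=\nabla v/|\nabla v|$. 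Then $v_1=w^{1/2}$, $\nabla w=2w^{1/2}(v_{11},v_{12},\dots,v_{1N})$, and $|\nabla w|^2=4w\sum_j v_{1j}^2$.

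\emph{Lower bound for $|D^2v|^2$.} Write
\[
|D^2v|^2\ge v_{11}^2+2\sum_{j\ge2}v_{1j}^2+\sum_{i\ge2}v_{ii}^2\ge v_{11}^2+2\sum_{j\ge2}v_{1j}^2+\frac{1}{N-1}\Big(\Delta v-v_{11}\Big)^2 .
\]
Substitute \eqref{v-w} for $\Delta v$, namely $\Delta v=\frac{(-f)^p}{f'}+M\frac{|f'|^q}{f'}w^{q/2}-\frac{f''}{f'}w$. Expand $(\Delta v-v_{11})^2$ and keep the squares $\frac{(-f)^{2p}}{(f')^2}$, $M^2|f'|^{2q-2}w^q$, $\big(\frac{f''}{f'}\big)^2w^2$ and $v_{11}^2$. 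Among the mixed products, two have a fixed sign and are kept exactly: $\frac{(-f)^p}{f'}\cdot\frac{-f''}{f'}w=-\frac{f''u^p}{(f')^2}w$ and $M\frac{|f'|^q}{f'}w^{q/2}\cdot\frac{-f''}{f'}w=-M|f'|^{q-2}f''w^{(q+2)/2}$. Multiplied by $-2\gamma\cdot\frac{2}{N-1}$ and added to the corresponding terms of $\mathcal N(w)$, they turn the coefficients $p$ and $q-1$ into $p+\frac{N+1}{N-1}\frac{f''u}{(f')^2}$ and $q-\frac{N+1}{N-1}$, since $1+\frac{2}{N-1}=\frac{N+1}{N-1}$. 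The product $\frac{(-f)^p}{f'}\cdot M\frac{|f'|^q}{f'}w^{q/2}=Mu^p|f'|^{q-2}w^{q/2}$ is nonnegative, so its contribution is nonpositive and is dropped.

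\emph{Cross terms with $v_{11}$.} The products $v_{11}\cdot\frac{u^p}{f'}$, $v_{11}\cdot M\frac{|f'|^q}{f'}w^{q/2}$ and $v_{11}\cdot\frac{f''}{f'}w$ are handled by Young's inequality with weight $\gamma^{-1/2}$:
\[
2|ab|\le \gamma^{1/2}a^2+\gamma^{-1/2}b^2 .
\]
This costs a factor $(1-\gamma^{-1/2})$ or $(1-2\gamma^{-1/2})$ on the three squared terms and produces a term of order $\gamma^{1/2}v_{11}^2$. This explains the coefficients $\frac{2\gamma(1-\gamma^{-1/2})}{N-1}$, $\frac{\gamma(1-2\gamma^{-1/2})}{N-1}$ and $\frac{2(1-N+3\gamma^{1/2})}{N-1}$ in the statement. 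Also use
\[
-(\gamma-1)w^{-1}|\nabla w|^2\le-4(\gamma-1)v_{11}^2 .
\]
Part of this, $-2(\gamma-1)v_{11}^2$, is displayed in the statement, and the rest is reserved to absorb the cutoff term below. The term $-2\sum_{j\ge2}v_{1j}^2$ is likewise kept in reserve.

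\emph{Cutoff terms.} Bound them as follows.
\begin{itemize}
\item $-2\gamma\langle\nabla w,\nabla\eta\rangle\le 4\gamma w^{1/2}|\nabla v_1||\nabla\eta|$. By Young this is at most $\epsilon\gamma\sum_j v_{1j}^2\,\eta+C\eta^{-1}w|\nabla\eta|^2$, and $C\eta^{-1}w|\nabla\eta|^2\le w^2\eta+C\eta^{-3}|\nabla\eta|^4$.
\item $-w\Delta\eta\le\sqrt N w|D^2\eta|\le w^2\eta+C\eta^{-1}|D^2\eta|^2$.
\item The $\frac{f''}{f'}$ part of the last term is kept as $2\big|\frac{f''}{f'}\big|w^{3/2}|\nabla\eta|$.
\end{itemize}
Together these yield the $+3w^2\eta$ term (with a third $w^2\eta$ used below) and $C(\eta^{-3}|\nabla\eta|^4+\eta^{-1}|D^2\eta|^2)$.

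\emph{Main obstacle.} The delicate remaining term is
\[
qM|f'|^{q-1}w^{(q+1)/2}|\nabla\eta| .
\]
It must be absorbed by $-\frac{\gamma}{N-1}\gamma^{-1/2}M^2|f'|^{2q-2}w^q\eta$, that is, by a portion of the $M^2$ square that was sacrificed. First I would try
\[
qM|f'|^{q-1}w^{\frac q2}\,w^{\frac12}|\nabla\eta|\le \frac{\gamma^{1/2}}{N-1}M^2|f'|^{2q-2}w^q\eta+C\eta^{-1}w|\nabla\eta|^2,
\]
and then bound $C\eta^{-1}w|\nabla\eta|^2$ by $w^2\eta+C\eta^{-3}|\nabla\eta|^4$ as before. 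Tracking the constants so that precisely the coefficients in the statement appear is the tedious part. The constants depend only on $N$, $q$ and $\gamma$, as claimed.
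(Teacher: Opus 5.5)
Your proposal is correct and is essentially the paper's proof. The shared steps are: $\alpha=0$ in Lemma~\ref{lem:L(wueta)}; the frame $e_1=\nabla v/|\nabla v|$ with $-(\gamma-1)w^{-1}|\nabla w|^2\le-4(\gamma-1)v_{11}^2$; the bound $|D^2v|^2\ge v_{11}^2+\frac1{N-1}(\Delta v-v_{11})^2$, expanded through \eqref{v-w} with Young's inequality at weights $\gamma^{\pm1/2}$; and the same Young splittings of the cutoff terms, including $|q|M|f'|^{q-1}w^{\frac{q+1}2}|\nabla\eta|\le\frac{\gamma^{1/2}}{N-1}M^2|f'|^{2q-2}w^q\eta+w^2\eta+C\eta^{-3}|\nabla\eta|^4$.

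Your one deviation is harmless and in fact slightly better. The paper absorbs $-2\gamma\langle\nabla w,\nabla\eta\rangle$ into $\frac{\gamma(\gamma-1)}4w^{-1}|\nabla w|^2\eta$, which breaks down at $\gamma=1$, whereas your reserve $-4\gamma\sum_{j\ge2}v_{1j}^2\eta$ covers the off-diagonal part for every $\gamma\ge1$. For the $v_{11}$ part, use the $-\frac{2\gamma}{N-1}v_{11}^2\eta$ produced by expanding $(\Delta v-v_{11})^2$, which the stated coefficient does not need. Do not rely on the remainder $-2\gamma(\gamma-1)v_{11}^2\eta$, since it vanishes at $\gamma=1$.
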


\begin{proof}
Taking $\alpha=0$ in Lemma \ref{lem:L(wueta)}, we obtain
\begin{align}\label{alpha=0}\nonumber
& w^{1-\gamma}\mathcal L(w^\gamma\eta)\\
\leq&\gamma\left(-(\gamma-1)w^{-1}|\nabla w|^2-2|D^2v|^2\right)\eta+2\gamma\left(p+\frac{f''u}{(f')^2}\right)u^{p-1}w\eta\\\nonumber
&+2\gamma\left(\frac{f''}{f'}\right)'w^2\eta-2\gamma M(q-1)|f'|^{q-2}f''w^{\frac{q+2}2}\eta-2\gamma \langle \nabla w,\nabla\eta\rangle \\\nonumber
& +w\left(|q|M|f'|^{q-1}w^{\frac{q-1}2}+2\left|\frac{f''}{f'}\right|w^{\frac12}\right)|\nabla\eta|+\sqrt N w|D^2\eta|.
\end{align}

Let $\{e_1,e_2,\cdots,e_N\}$ be a local orthonormal frame of the tangent bundle $T\mathbb R^N$ on a domain with $w>0$ such that $e_1=\nabla v/|\nabla v|$.  Setting $v_i=\partial v/\partial e_i$  for $i\in\{1,2,\ldots,N\}$, then we have
\begin{equation*}
 v_1=\langle\nabla v,e_1\rangle=w^{\frac12},
\end{equation*}
and
\begin{equation}\label{v_11}
  v_{11}=\frac12 w^{-\frac12}w_1=\frac12w^{-1}\langle\nabla w,\nabla v\rangle.
\end{equation}
From \eqref{v_11} and Cauchy-Schwarz's inequality, we obtain
\begin{equation*}
  v_{11}^2=\frac14 w^{-2}\langle\nabla w,\nabla v\rangle^2\leq \frac14 w^{-1}|\nabla w|^2.
\end{equation*}
Hence for any $\gamma\geq 1$, we know
\begin{equation}\label{gamma-1}
  -(\gamma-1)w^{-1}|\nabla w|^2\leq -4(\gamma-1)v_{11}^2.
\end{equation}

As for the estimates of $|D^2v|^2$, we note that
\begin{equation}\label{D2v-1}
  -|D^2v|^2\leq-v_{11}^2-\sum_{i=2}^N v_{ii}^2\leq -v_{11}^2-\frac{1}{N-1}\left(\sum_{i=2}^Nv_{ii}\right)^2.
\end{equation}
Since
\begin{equation*}
  \left(\sum_{i=2}^Nv_{ii}\right)^2=\left(-\Delta v+v_{11}\right)^2,
\end{equation*}
it follows from \eqref{v-w} that
\begin{align}\label{sum-vii}\nonumber
&-\left(\sum_{i=2}^N v_{ii}\right)^2\\
=&-\left(-\frac{(-f)^p}{f'}-M\frac{|f'|^q}{f'}w^{\frac q2}+\frac{f''}{f'}w+v_{11}\right)^2\\\nonumber
=&-v_{11}^2-\frac{(-f)^{2p}}{(f')^2}-M^2|f'|^{2q-2}w^q-\left(\frac{f''}{f'}\right)^2w^2-2M(-f)^p|f'|^{q-2}w^{\frac q2}\\\nonumber
&+\frac{2(-f)^p f''}{(f')^2}w+2M|f'|^{q-2}f''w^{\frac{q+2}2}-2v_{11}\left(-\frac{(-f)^p}{f'}-M\frac{|f'|^q}{f'}w^{\frac q2}+\frac{f''}{f'}w\right).
\end{align}
By Young's inequality, we have
\begin{equation}\label{fix-1}
  2v_{11}\frac{(-f)^p}{f'}\leq \gamma^{-\frac12}\frac{(-f)^{2p}}{(f')^2}+\gamma^{\frac12}v_{11}^2,
\end{equation}
\begin{equation}\label{fix-2}
  2Mv_{11}w^{\frac{q}2}\frac{|f'|^q}{f'}\leq \gamma^{-\frac12}M^2|f'|^{2q-2}w^q+\gamma^{\frac12}v_{11}^2,
\end{equation}
and
\begin{equation}\label{fix-3}
 -2v_{11}w\frac{f''}{f'}\leq \gamma^{-\frac12}\left(\frac{f''}{f'}\right)^2w^2+\gamma^{\frac12}v_{11}^2.
\end{equation}
Substituting \eqref{sum-vii}--\eqref{fix-3} into \eqref{D2v-1}, we have
\begin{align*}
-|D^2v|^2\leq& \left(\frac{3\gamma^{1/2}}{N-1}-1\right)v_{11}^2-\frac{1-\gamma^{-1/2}}{N-1}\left(\frac{f''}{f'}\right)^2w^2-\frac{1-\gamma^{-1/2}}{N-1}\frac{(-f)^{2p}}{(f')^2}\\
  &-\frac{(1-\gamma^{-1/2})}{N-1}M^2|f'|^{2q-2}w^q+\frac{2}{N-1}\left(\frac{(-f)^pf''}{(f')^2}w+M|f'|^{q-2}f''w^{\frac{q+2}2}\right).
\end{align*}
For terms containing $|\nabla\eta|$ and $|D^2\eta|$,  by Cauchy-Schwarz's and Young's inequalities, we obtain
\begin{align}
-2\gamma \langle\nabla w,\nabla\eta\rangle\leq&\frac{\gamma(\gamma-1)}4 w^{-1}|\nabla w|^2\eta+C(\gamma)w\eta^{-1}|\nabla\eta|^2.
\end{align}
Moreover, we have
\begin{equation*}
  C(\gamma)w\eta^{-1}|\nabla\eta|^2\leq w^2\eta+C(\gamma)\eta^{-3}|\nabla\eta|^4,
\end{equation*}
\begin{equation*}
  \sqrt N w|D^2\eta|\leq w^2\eta+C(N)\eta^{-1}|D^2\eta|^2,
\end{equation*}
and
\begin{align*}
  |q|M|f'|^{q-1}w^{\frac{q+1}2}|\nabla\eta|&\leq \frac{\gamma^{1/2}}{N-1}M^2|f'|^{2q-2}w^q\eta+C_1(N,q,\gamma)w\eta^{-1}|\nabla\eta|^2\\
  &\leq  \frac{\gamma^{1/2}}{N-1}M^2|f'|^{2q-2}w^q\eta+ w^2\eta+C(N,q,\gamma)\eta^{-3}|\nabla\eta|^4.
\end{align*}
Combining above estimates with \eqref{gamma-1} and going back to \eqref{alpha=0}, we deduce that
\begin{align*}
&w^{1-\gamma}\mathcal L(w^{\gamma}\eta)\\
  \leq &\gamma\left(-2(\gamma-1)+\frac{2(1-N+3\gamma^{1/2})}{N-1}\right)v^2_{11}\eta+2\gamma\left[p+\left(1+\frac{2}{N-1}\right)\frac{f''u}{(f')^2}\right]u^{p-1}w\eta\\
  &+\!2\gamma \left(\frac{f''}{f'}\right)'w^2\eta-\!2\gamma M\left(q-\!\frac{N+1}{N-1}\right)|f'|^{q-2}f''w^{\frac{q+2}2}\eta-\!\frac{\gamma (1\!-\!2\gamma^{-1/2})}{N-1}M^2|f'|^{2q-2}w^q\eta\\
  &+\bigg[-\frac{2\gamma(1-\gamma^{-1/2})}{N-1}\left(\frac{f''}{f'}\right)^2+3\bigg]w^2\eta-\frac{2\gamma(1-\gamma^{-1/2})}{N-1}\frac{(-f)^{2p}}{(f')^2}\eta\\
  &+2\left|\frac{f''}{f'}\right|w^{\frac32}|\nabla\eta|+C(N,q,\gamma)\left(\eta^{-3}|\nabla\eta|^4+\eta^{-1}|D^2\eta|^2\right).
\end{align*}
Thus we finish the proof.
\end{proof}

We also need the following maximum principle.

\begin{lemma}\label{lem:maxi-pric}
Let $\Omega\subset\mathbb R^N$ be a domain, $\vec{b}\in \mathbb R^N$ and $A>0$. Set
\begin{equation*}
  \Omega_A^+=\{x\in\Omega;\ z(x)>A\}.
\end{equation*}
Assume that $z$ is positive continuous in $\Omega$ and $C^2$ on $\Omega_A^+$. If $z$ satisfies
\begin{equation}\label{z>A}
  -\Delta z+\vec{b}\cdot \nabla z<0\quad \text{in}\ \Omega_A^+,
\end{equation}
and
\begin{equation*}
\vec{b}\in L^{\infty}(\Omega_A^+),
\end{equation*}
then $z\leq A$ in $\Omega$.
\end{lemma}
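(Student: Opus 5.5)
The plan is to argue by contradiction at an interior maximum point, using the \emph{strict} inequality \eqref{z>A}. Because the inequality is strict, no Hopf lemma or perturbation argument will be needed. First I will make the implicit setting precise. In every application, $z=u^\alpha w^\gamma\eta$ with $\eta=0$ for $|x-x_0|\geq R'$, so $z$ vanishes outside $B_{R'}\Subset\Omega$. I will therefore work under the standing assumption that $\overline{\Omega_A^+}$ is a compact subset of $\Omega$, equivalently that $z\leq A$ near $\partial\Omega$ and at infinity. As stated, the lemma gives no boundary information, and without such an assumption the conclusion cannot hold in general. I will state this restriction explicitly in the proof.

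Suppose, for contradiction, that $\sup_\Omega z>A$, so $\Omega_A^+\neq\emptyset$. Since $z$ is continuous, $\Omega_A^+$ is open. By the compactness assumption, $z$ restricted to $\overline{\Omega_A^+}$ attains its maximum at some point $x_1$. On $\partial\Omega_A^+\cap\Omega$ we have $z=A$, while $z(x_1)\geq\sup_\Omega z>A$. Hence $x_1$ lies in the open set $\Omega_A^+$, and it is a global maximum point of $z$ on $\Omega$.

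Since $z$ is $C^2$ in a neighbourhood of $x_1$, the usual first- and second-order conditions at an interior maximum give $\nabla z(x_1)=0$ and $D^2z(x_1)\leq 0$, so $\Delta z(x_1)\leq0$. The hypothesis $\vec b\in L^\infty(\Omega_A^+)$ guarantees that $\vec b(x_1)\cdot\nabla z(x_1)$ is a finite product, so it equals $0$. Consequently
\begin{equation*}
-\Delta z(x_1)+\vec b(x_1)\cdot\nabla z(x_1)=-\Delta z(x_1)\geq 0,
\end{equation*}
which contradicts \eqref{z>A} at $x_1\in\Omega_A^+$. Therefore $\Omega_A^+=\emptyset$, that is, $z\leq A$ in $\Omega$.

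The only delicate step is ensuring that the maximum is attained inside $\Omega_A^+$. This is exactly where the compact support of the cut-off function $\eta$ (or, more generally, $z\leq A$ outside a compact subset of $\Omega$) is used. The rest is the elementary interior-maximum argument. No continuity of $\vec b$ is needed, because \eqref{z>A} is used pointwise at the single point $x_1$.
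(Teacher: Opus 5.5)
Your proof is correct and follows the paper's route. Both argue by contradiction on the superlevel set $\Omega_A^+$, on whose boundary $z=A$; the paper invokes the classical maximum principle for $\bar z=z-A$, while you carry it out directly at an interior maximum using the strict inequality, which also shows that the $L^\infty$ bound on $\vec b$ is superfluous. You are right to add the hypothesis $\overline{\Omega_A^+}\Subset\Omega$: it holds in every application because $\eta$ vanishes outside $B_{R'}$, the paper uses it tacitly when asserting $\bar z=0$ on $\partial\Omega_A^+$, and without it the statement is false (e.g.\ $z=A+1+|x|^2$, $\vec b=0$).
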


\begin{proof}
Assume for contradiction that there exists $x_0\in \Omega$ such that $z(x_0)>A$. Set
\begin{equation*}
  \bar z(x)=z(x)-A>0,\quad x\in\Omega_A^+,
\end{equation*}
then
\begin{equation*}
  -\Delta\bar z+\vec{b}\cdot\nabla \bar z<0 \quad\text{in} \ \Omega_A^+.
\end{equation*}
However, $\bar z=0$ on $\partial \Omega_A^+$. The maximum principle of classical solutions leads to that $\bar z\leq 0$ in $\Omega_A^+$, which gives that contradiction
\begin{equation*}
0<z(x_0)-A=\bar z(x_0)\leq 0.
\end{equation*}
Hence $\Omega_A^+=\emptyset$ and $z\leq A$ in $\Omega$, and the proof is complete.
\end{proof}

\section{Auxiliary functions and gradient estimates}\label{sec:aux-funt-&-est}
As in past studies on gradient estimates, the proof is based on the elaborate Bernstein technique. Within this framework, we introduce suitable auxiliary functions $f$ depending on the range of $(p,q)$ given by
\begin{align}\label{f1}
f(s)=
\begin{cases}
m(s+1)^\beta-2m,\quad &\text{for}\  G_1\ \text{and}\ G_6,\\
-s,\quad &\text{for}\  G_2\ \text{and}\ G_4,\\
-e^s,\quad &\text{for}\  G_3\ \text{and}\ G_5,
\end{cases}
\end{align}
where $m,\beta>0$ and the domain of $f$ will be determined later.

For pointwise gradient estimates, the case $\nabla u=0$ is trivial. It therefore suffices to derive the estimates in $\Omega\backslash\Omega_0$. From now on, all  computations are carried out on the set $\{x\in B_{R'};\,w(x)>0\}$ unless otherwise stated, and hence we can use Lemmas \ref{lem:L(wueta)} or \ref{lem:e1}.

\subsection {The case $p\geq (N+3)/(N-1)$}

In this section,  we consider the cases where $q$ is subcritical and supercritical.

\subsubsection{The subcritical range of $q$ }

We first give the gradient estimates for local  bounded solutions when $q$ is subcritical with respect to $p$.
\begin{lemma}\label{lem:p>N+3,q<2p/(p+1)}
Let $(p,q)\in G_1$. Assume $u=-f(v)$ is a positive solution of \eqref{eq1} in $\Omega$ that satisfies
\begin{equation}\label{u<M1}
  u\leq c_{N,p,q}M^{\frac{2}{2p-(p+1)q}}
\end{equation}
for some $c_{N,p,q}>0$ and $f$ defined in \eqref{f1}.
Set $z=u^\alpha w\eta$. Then for
\begin{equation*}
\max\left\{-\frac1{3N},-\frac4{3-q}\right\} <\alpha<0,
\end{equation*}
there exists a constant $C=C(N,M,p,q)>0$ such that
\begin{align*}
&u^{2-\alpha}(f')^{-2}w^{-2}\eta^{-1}\mathcal L_\alpha(z)\\
\leq & \frac{\alpha(\alpha+1)}2+C\left(z^{-1}|D^2\eta|+z^{-\frac{3-q}2}\eta^{\frac{1-q}2}|\nabla\eta|+z^{-\frac12}\eta^{-\frac12}|\nabla\eta|+z^{-1}\eta^{-1}|\nabla\eta|^2\right)
\end{align*}
holds in $\{x\in B_{R'};\,z(x)>0\}$.
\end{lemma}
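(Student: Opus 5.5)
I apply Lemma \ref{lem:L(wueta)} with $\gamma=1$ and the auxiliary function $f(s)=m(s+1)^\beta-2m$ from \eqref{f1}, with $\beta\in(0,1)$ and $m>0$ to be fixed. Since $u=-f(v)>0$, we have $u=2m-m(v+1)^\beta$. Then $f'=m\beta(v+1)^{\beta-1}>0$ and $f''=m\beta(\beta-1)(v+1)^{\beta-2}<0$. This gives the identities
\[
\frac{f''}{f'}=\frac{\beta-1}{v+1},\qquad \Big(\frac{f''}{f'}\Big)'=\frac{1-\beta}{(v+1)^2},\qquad \frac{f''u}{(f')^2}=\frac{(\beta-1)u}{m\beta(v+1)^\beta}.
\]
Since $m<m(v+1)^\beta<2m$, the quantities $u$, $m(v+1)^{\beta}$ and $f'(v+1)$ are all comparable. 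I will express everything in units of $(f')^2u^{-1}w^2$, which is the normalization $u^{2-\alpha}(f')^{-2}w^{-2}\eta^{-1}$ in the claim; note that $u^{1-\alpha}\mathcal L_\alpha(z)$ is what Lemma \ref{lem:L(wueta)} controls. The good term is $\alpha(\alpha+1)u^{-1}(f')^2w^2\eta<0$, since $-1<\alpha<0$. I aim to keep half of it and absorb every other interior term into the other half, into $-2u|D^2v|^2$, or into $2\alpha f''w^2\eta$, which is positive and must itself be controlled.

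\emph{Interior terms.} First, I use $-2u|D^2v|^2\le -\frac{2u}{N}(\Delta v)^2$ with \eqref{v-w}. Expanding the square produces negative multiples of $u\frac{u^{2p}}{(f')^2}$, $uM^2|f'|^{2q-2}w^q$ and $u(f''/f')^2w^2$, together with cross terms. Second, the positive term $2u(f''/f')'w^2=2(1-\beta)u(v+1)^{-2}w^2$ must be beaten by $-\frac{2}{N}u(\beta-1)^2(v+1)^{-2}w^2$ plus the $\alpha$-term. This is where the constraints on $\alpha$ and $\beta$ enter, in particular $\alpha>-1/(3N)$, which keeps the positive piece $2\alpha f''$ small relative to $\alpha(\alpha+1)(f')^2/u$. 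Third, the $u^{p-1}w$ term has coefficient $2p+2f''u/(f')^2+\alpha$. Choosing $\beta$ small makes $f''u/(f')^2\approx(\beta-1)/\beta$ very negative, so this term becomes negative. Alternatively I handle it by Young's inequality, splitting it between $u^{2p+1}/(f')^2$ and $(f')^2w^2/u$; the hypothesis $p\ge (N+3)/(N-1)$ should be exactly what makes the cross term $\frac{2}{N}\cdot 2\frac{u^pf''}{(f')^2}w$ combine correctly with the $2p$ coefficient. Fourth, the $w^{(q+2)/2}$ term has coefficient of order $M|f'|^{q-2}|f''|u+M|\alpha||f'|^q$. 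By Young's inequality with exponents $\frac{2}{q}\cdot$ and conjugate, I interpolate $w^{(q+2)/2}$ between $w^2$ and $w$. The subcriticality $q<2p/(p+1)$ together with the bound \eqref{u<M1}, $u\le cM^{2/(2p-(p+1)q)}$, makes the resulting $u^{p-1}w$-type remainder dominated by the negative $u^{p-1}w$ part; that bound is precisely the scaling needed to compare $Mu^{\dots}w^{q/2}$ with $u^pw^{0}$. The choice $m\sim$ a multiple of $\sup u$ fixes the constant.

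\emph{Cut-off terms.} These are $-2u\langle\nabla w,\nabla\eta\rangle$, $uw(M|f'|^{q-1}w^{(q-1)/2}+2|f''/f'|w^{1/2})|\nabla\eta|$ and $\sqrt N uw|D^2\eta|$. I divide by $(f')^2u^{-1}w^2\eta$ and rewrite powers of $w$ through $z=u^\alpha w\eta$; the comparability of $u$, $f'$ and $m$ lets me absorb powers of $u$ into $C$. The $|D^2\eta|$ term gives $z^{-1}|D^2\eta|$. The $w^{1/2}$ term gives $z^{-1/2}\eta^{-1/2}|\nabla\eta|$. The $w^{(q+1)/2}$ term gives $w^{(q-3)/2}\eta^{-1}|\nabla\eta|=z^{-(3-q)/2}\eta^{(1-q)/2}|\nabla\eta|$, up to powers of $u$. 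For $\nabla w$, I write $u\langle\nabla w,\nabla\eta\rangle$ by Cauchy--Schwarz as $\le \epsilon u^{-1}(f')^2\dots$. More concretely, $|\nabla w|\le 2w^{1/2}|D^2v|$, so Young's inequality absorbs it into the remaining part of $-2u|D^2v|^2$ (I reserve a fraction of it) at the cost of $uw\eta^{-1}|\nabla\eta|^2$. That cost becomes $z^{-1}\eta^{-1}|\nabla\eta|^2$. The constraint $\alpha>-4/(3-q)$ presumably ensures the powers of $u$ in these conversions stay bounded under the normalization.

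\textbf{Main obstacle.} The hardest step is the simultaneous bookkeeping of the $w^2$ coefficients: $2u(f''/f')'$, $2\alpha f''$, the $\frac{2}{N}$-share of $(f''/f')^2$ and the reserved part of $|D^2v|^2$, all against $\alpha(\alpha+1)(f')^2/u$. Everything has to close with a margin of $\tfrac12\alpha(\alpha+1)$. I would first fix $\beta$ as a function of $N$ by optimizing the quadratic form in $1-\beta$ coming from $-\frac{2}{N}(1-\beta)^2+2(1-\beta)$, measured relative to $(f')^2(v+1)^2/u\approx\beta^2 m$. Only after that would I pick the range of $\alpha$.
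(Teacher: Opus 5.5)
Your handling of the cut-off terms agrees with the paper: you reserve one copy of $|D^2v|^2$ for $-2u\langle\nabla w,\nabla\eta\rangle$ via $|\nabla w|\le 2w^{1/2}|D^2v|$, rewrite in terms of $z$, and use $\alpha>-\frac{4}{3-q}$ to keep the power $\frac{\alpha(3-q)+4}{2}$ of $u$ positive. The interior estimate, however, has a genuine gap: you never control the term $\bigl(2p+2\frac{f''u}{(f')^2}+\alpha\bigr)u^{p-1}w$.

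\begin{itemize}
\item \emph{Small $\beta$ does not help.} This route fails because $u=2m-m(v+1)^\beta$ is \emph{not} comparable to $m(v+1)^\beta$; it tends to $0$ as $(v+1)^\beta\to 2$. Hence $\frac{f''u}{(f')^2}=\frac{(\beta-1)u}{\beta m(v+1)^\beta}\to 0$, and the coefficient tends to $2p+\alpha>0$ wherever $u$ is small.
\item \emph{Your AM--GM split fails.} Absorbing $(2p+\alpha)u^pw$ into $-\frac2N u^{2p+1}(f')^{-2}$ and $|\alpha|(1-|\alpha|)(f')^2u^{-1}w^2$ needs $(p+\frac{\alpha}{2})^2\le 2|\alpha|(1-|\alpha|)/N$. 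This is impossible since $|\alpha|<\frac1{3N}$ and $p>1$.
\item \emph{Your fourth step has nothing to absorb into.} It relies on a ``negative $u^{p-1}w$ part'' that does not exist. Splitting $w^{(q+2)/2}$ between $w$ and $w^2$ only creates another $w$-term that cannot be absorbed.
\end{itemize}

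The missing idea is that the sink for $u^{p-1}w$ is $-\frac{M^2}{N}(f')^{2q-2}w^q$, the contribution of $M|\nabla u|^q$ to $(\Delta v)^2$. This is exactly where the bound $u\le cM^{2/(2p-(p+1)q)}$ is used.
\begin{itemize}
\item For $q<1$, Young with exponents $2-q$, $\frac{2-q}{1-q}$ gives $6pu^{p-1}w\le\frac1{2-q}(\frac{f''}{f'})^{2q-2}u^{(p-1)(2-q)}w^q+C_1(\frac{f''}{f'})^2w^2$. The first piece is at most $\frac{M^2}{4N}(f')^{2q-2}w^q$ because $(p-1)(2-q)>0$ and $u\le m=cM^{2/(2p-(p+1)q)}$.
\item For $1<q<\frac{2p}{p+1}$, exponents $q$, $\frac{q}{q-1}$ give $\frac{M^2}{4N}(f')^{2q-2}w^q+CM^{-\frac{2}{q-1}}u^{\frac{(p-1)q}{q-1}}(f')^{-2}$. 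The last term is absorbed by $-\frac1N u^{2p}(f')^{-2}$, since $2p-\frac{(p-1)q}{q-1}=-\frac{2p-(p+1)q}{q-1}<0$.
\item The $w^{(q+2)/2}$ terms are likewise split between $w^q$ and $w^2$, producing $C_2(\frac{f''}{f'})^2w^2$ and $N\alpha^2(f')^2u^{-2}w^2$. Absorbing the latter into $\alpha(\alpha+1)(f')^2u^{-2}w^2$ is the actual role of $\alpha>-\frac1{3N}$. Also, $p\ge\frac{N+3}{N-1}$ is used here only for crude bounds such as the factor $6p$ and $p>1$.
\end{itemize}

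The sign of $\beta$ is also wrong. With $\beta\in(0,1)$ you get $f''<0$, so both $2u(\frac{f''}{f'})'w^2=2u(1-\beta)(v+1)^{-2}w^2$ and $2\alpha f''w^2$ are positive. Since $2(1-\beta)-\frac2N(1-\beta)^2>0$ for every $\beta\in(0,1)$, no choice $\beta=\beta(N)$ fixes this. Only the $O(|\alpha|)$ term could, which forces $1-\beta=O(|\alpha|)$ and makes $f$ essentially linear. The $\alpha$-term is then the only $w^2$ sink, so for $q<1$ the constant in the bound on $u$ would have to depend on $\alpha$.

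The paper instead takes $\beta>1$. Then $f',f''>0$, the term $2\alpha f''w^2\le 0$ can be discarded, $\frac{f''u}{(f')^2}\le\frac{\beta-1}{\beta}<1$, $m\le f'\le 2\beta m$, and $(\frac{f''}{f'})'=-\frac{\beta-1}{(v+1)^2}<0$. Choosing $\beta=1+1/C(N,p,q)$ makes $(\frac{f''}{f'})'+C(\frac{f''}{f'})^2=0$. So all the $w^2$ remainders from Young's inequality cancel exactly, and only $\frac{\alpha(\alpha+1)}{2}(f')^2u^{-2}w^2$ survives.
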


\begin{proof}
Taking $\gamma=1$ in Lemma \ref{lem:L(wueta)}, there holds
\begin{align}\label{L_gamma=0}\nonumber
& u^{1-\alpha}\mathcal L_\alpha(u^\alpha w\eta)\\
\leq& u\left[-2|D^2v|^2+\left(2p+2\frac{f''u}{(f')^2}+\alpha\right)u^{p-1}w+2\left(\frac{f''}{f'}\right)'w^2\right]\eta\\\nonumber
&-M(q-1)\left(2 |f'|^{q-2}f''u+\alpha|f'|^q\right)w^{\frac{q+2}2}\eta+\left(\alpha(\alpha+1)u^{-1}(f')^2+2\alpha f''\right)w^2\eta\\\nonumber
&-2 u\langle \nabla w,\nabla\eta\rangle +uw\left(M|q||f'|^{q-1}w^{\frac{q-1}2}+2\left|\frac{f''}{f'}\right|w^{\frac12}\right)|\nabla\eta|+\sqrt N uw|D^2\eta|.
\end{align}
We shall find the corresponding upper bounds for nonnegative terms on the right-hand side of \eqref{L_gamma=0}. The nonnegative terms would be absorbed by $-|D^2v|^2\eta$ and $(f''/f')'w^2\eta$ with a suitable function $f$.

By \eqref{1/N} and \eqref{v-w}, we derive
\begin{align}\label{exp-D2v}\nonumber
  -|D^2v|^2\eta\leq&-\frac1N(\Delta v)^2\eta\\\nonumber
  =&-\frac1N\left(\frac{(-f)^p}{f'}+M\frac{|f'|^q}{f'}w^{\frac{q}2}-\frac{f''}{f'}w\right)^2\eta\\
  =&-\frac1N\bigg[\frac{(-f)^{2p}}{(f')^2}+M^2(f')^{2q-2}w^q+2M(-f)^p|f'|^{q-2}w^{\frac{q}2}\\\nonumber
  &+\left(\frac{f''}{f'}\right)^2w^2-2\frac{(-f)^pf''}{(f')^2}w-2M|f'|^{q-2}f''w^{\frac{q+2}{2}}\bigg]\eta.
\end{align}
From Young's inequality, we have
\begin{equation}\label{nablaw.nablaeta}
  2u\langle\nabla w,\nabla\eta\rangle\leq 2u|\nabla w||\nabla\eta|\leq 4u|D^2v|w^{\frac12}|\nabla\eta|\leq u\left(|D^2v|^2\eta+4w\eta^{-1}|\nabla \eta|^2\right).
\end{equation}
Subsitituting \eqref{exp-D2v} and \eqref{nablaw.nablaeta} into \eqref{L_gamma=0} leads to
\begin{align}\label{alpha<0}\nonumber
&u^{-\alpha}\mathcal L_\alpha(u^\alpha w\eta)\\\nonumber
\leq &\left(2p+\frac{2N+2}{N}\frac{f''}{(f')^2}u+\alpha\right)u^{p-1}w\eta-\frac{M^2}N(f')^{2q-2}w^q\eta-\frac1N\frac{(-f)^{2p}}{(f')^2}\eta\\
&-M\left[2\left(q-\frac{N+1}N\right)|f'|^{q-2}f''+\alpha(q-1)|f'|^qu^{-1}\right]w^{\frac{q+2}2}\eta\\\nonumber
&-\frac{1}{N}\left(\frac{f''}{f'}\right)^2w^2\eta+2\left(\frac{f''}{f'}\right)'w^2\eta+\left(\alpha(\alpha+1)(f')^2 u^{-2}+2\alpha f''u^{-1}\right)w^2\eta\\\nonumber
&+w\left(\!M|q||f'|^{q-1}w^{\frac{q-1}2}+\!2\left|\frac{f''}{f'}\right|w^{\frac12}+4\eta^{-1}|\nabla\eta|\right)|\nabla\eta|+\sqrt Nw|D^2\eta|,
\end{align}
where we omit the non-positive term $-2MN^{-1}(-f)^p|f'|^{q-2}u^{-1}w^{q/2}\eta$. Setting
\begin{equation*}
  m=\max_{x\in\overline {B}_{R'}}u(x),
\end{equation*}
and  following from \eqref{f1}, we note that $f$ maps $[0,2^{1/\beta}-1)$ into $[-m,0)$, and $f',f''>0$. Moreover, for $\beta >1$ to be determined later, we have
\begin{equation}\label{f''f-2u}
  \frac{f''}{(f')^2}u\leq \frac{\beta-1}{\beta}<1.
\end{equation}
Choosing $\alpha<0$, we also omit non-positive terms $\alpha u^{p-1}w\eta$ and $2\alpha f''u^{-1}w^2\eta$ in \eqref{alpha<0}.

Now we turn to estimates for $u^{p-1}w\eta$. Since $p\geq(N+3)/(N-1)$, it follows from $\eqref{f''f-2u}$ that
\begin{equation*}
\left(2p+\frac{2N+2}{N}\frac{f''}{(f')^2}u\right)u^{p-1}w\eta\leq 6pu^{p-1}w\eta.
\end{equation*}
For $q<1$, by Young's inequality with the exponent pair $2-q$, $(2-q)/(1-q)$, one has
\begin{align}\label{6pup-w-eta}\nonumber
 &6pu^{p-1}w\eta\\\nonumber
 \leq &\frac1{2-q}\left[\left(\frac{f''}{f'}w\right)^{-\frac{2(1-q)}{2-q}}u^{p-1}w\right]^{2-q}\eta+\frac{1-q}{2-q}\left[6p\left(\frac{f''}{f'}w\right)^{\frac{2(1-q)}{2-q}}\right]^{(2-q)/(1-q)}\eta\\
 =&\frac1{2-q}\left(\frac{f''}{f'}\right)^{2q-2} u^{(p-1)(2-q)}w^q\eta+\frac{1-q}{2-q}(6p)^{\frac{2-q}{1-q}}\left(\frac{f''}{f'}\right)^2w^2\eta\\\nonumber
 :=&\frac1{2-q}\left(\frac{f''}{f'}\right)^{2q-2} u^{(p-1)(2-q)}w^q\eta+C_1(p,q)\left(\frac{f''}{f'}\right)^2w^2\eta.
\end{align}
For $1<q<2p/(p+1)$, we have
\begin{align}\label{u^pw}\nonumber
&6pu^{p-1}w\eta\\
\leq&\frac{1}{q}\left[ \left(\frac{qM^2}{4N}\right)^{\frac{1}{q}}(f')^{\frac{2q-2}{q}}w\right]^q\eta+\frac{q-1}{q}\left[ 6p\left(\frac{qM^2}{4N}\right)^{-\frac{1}{q}}u^{p-1}(f')^{\frac{2-2q}{q}}\right]^{q/(q-1)}\eta\\\nonumber
:=&\frac{M^2}{4N}(f')^{2q-2}w^q\eta+C(N,p,q)M^{-\frac{2}{q-1}}u^{\frac{(p-1)q}{q-1}}(f')^{-2}\eta.
\end{align}
By using Young's inequality again, we obtain
\begin{equation}\label{Y-w_q+2}
-2M\left(q-\frac{N+1}{N}\right)|f'|^{q-2}f''w^{\frac{q+2}{2}}\eta\leq \frac{M^2}{4N}(f')^{2q-2}w^q\eta+C_2(N,q)\left(\frac{f''}{f'}\right)^2w^2\eta,
\end{equation}
and
\begin{align}\label{Malpha}\nonumber
  -M\alpha (q-1)|f'|^qu^{-1}w^{\frac{q+2}{2}}\eta&\leq M|\alpha||f'|^qu^{-1}w^{\frac{q+2}{2}}\eta\\
  &\leq \frac{M^2}{4N}(f')^{2q-2}w^q\eta+N\alpha^2(f')^2u^{-2}w^2\eta,
\end{align}
where we use the fact that $q<2p/(p+1)<2$. We select $-(3N)^{-1}<\alpha <0$ such that
\begin{equation}\label{-5N-1<alpha}
  N\alpha^2+\frac{\alpha(\alpha+1)}2<0.
\end{equation}
From \eqref{alpha<0}--\eqref{-5N-1<alpha}, for $q\leq 1$, we have
\begin{align}\label{alpha-beta}\nonumber
  &u^{-\alpha}\mathcal L_\alpha(u^\alpha w\eta)\\\nonumber
  \leq& \bigg[\frac{6p}{2-q}\left(\frac{f''}{f'}\right)^{2q-2} u^{(p-1)(2-q)}-\frac{M^2}{4N}(f')^{2q-2}\bigg]w^q\eta\\\nonumber
  &+2\left[\left(\frac{f''}{f'}\right)'+\left(C_1(p,q)+C_2(N,q)\right)\left(\frac{f''}{f'}\right)^2\right]w^2\eta+\frac{\alpha(\alpha+1)}2u^{-2}(f')^2w^2\eta\\
  &+w\left(|q|M|f'|^{q-1}w^{\frac{q-1}2}+2\left|\frac{f''}{f'}\right|w^{\frac12}+4\eta^{-1}|\nabla\eta|\right)|\nabla\eta|+\sqrt N w|D^2\eta|,
\end{align}
and for $1<q<2p/(p+1)$,
\begin{align}\label{alpha-beta-2}\nonumber
  &u^{-\alpha}\mathcal L_\alpha(u^\alpha w\eta)\\\nonumber
  \leq&-\frac{M^2}{4N}(f')^{2q-2}w^q\eta+2\bigg[\left(\frac{f''}{f'}\right)'+C_2(N,q)\left(\frac{f''}{f'}\right)^2\bigg]w^2\eta\\\nonumber
  &+C(N,p,q)M^{-\frac{2}{q-1}}u^{\frac{(p-1)q}{q-1}}(f')^{-2}\eta-\frac{1}{N}\frac{(-f)^{2p}}{(f')^2}\eta+\frac{\alpha(\alpha+1)}2u^{-2}(f')^2w^2\eta\\\nonumber
  &+ w\left(|q|M|f'|^{q-1}w^{\frac{q-1}2}+2\left|\frac{f''}{f'}\right|w^{\frac12}+4\eta^{-1}|\nabla\eta|\right)|\nabla\eta|+\sqrt N w|D^2\eta|\\
=&-\frac{M^2}{4N}(f')^{2q-2}w^q\eta+2\bigg[\left(\frac{f''}{f'}\right)'+C_2(N,q)\left(\frac{f''}{f'}\right)^2\bigg]w^2\eta\\\nonumber
&+M^{-\frac{2}{q-1}}(f')^{-2}u^{\frac{(p-1)q}{q-1}}\left(C(N,p,q)-\frac{1}{N }M^{\frac{2}{q-1}}u^{2p-\frac{(p-1)q}{q-1}}\right)\eta+\sqrt N w|D^2\eta|\\\nonumber
&+\frac{\alpha(\alpha+1)}2u^{-2}(f')^2w^2\eta+ w\left(|q|M|f'|^{q-1}w^{\frac{q-1}2}+2\left|\frac{f''}{f'}\right|w^{\frac12}+4\eta^{-1}|\nabla\eta|\right)|\nabla\eta|.
\end{align}

Letting  $\beta=1+1/C(N,p,q)$ in \eqref{f1}, where $C(N,p,q)=C_1(p,q)\chi_q+C_2(N,q)>0$, $\chi_q=1$ for $q\leq 1$ and $\chi_q=0$ for $q>1$, it follows that
\begin{equation}\label{f-beta-0}
  \left(\frac{f''}{f'}\right)'+C(N,p,q)\left(\frac{f''}{f'}\right)^2=0,
\end{equation}
\begin{equation}\label{m<f'<2m*}
   m\leq f'\leq 2\beta m,
\end{equation}
and
\begin{equation}\label{f''/f'<C}
  \left(\frac{f''}{f'}\right)^{2q-2}\leq C(N,p,q).
\end{equation}
Since $p\geq (N+3)/(N-1)$ and $q<2p/(p+1)<2$, we have $(p-1)(2-q)>0$.  By \eqref{m<f'<2m*} and \eqref{f''/f'<C}, we can choose a constant $c_{N,p,q}>0$ such that when
\begin{equation}\label{m-bound*}
 m=c_{N,p,q}M^{\frac{2}{2p-(p+1)q}},
\end{equation}
it holds that
\begin{equation*}
 \frac{6p}{2-q}\left(\frac{f''}{f'}\right)^{2q-2} u^{(p-1)(2-q)}-\frac{M^2}{4N}(f')^{2q-2}\leq 0 \quad \text{for}\ q\leq 1,
\end{equation*}
and
\begin{equation*}
 C(N,p,q)-\frac{1}{N}M^{\frac{2}{q-1}}u^{2p-\frac{(p-1)q}{q-1}}\leq 0\quad\text{for}\ 1< q<\frac{2p}{p+1}.
\end{equation*}
Using \eqref{alpha-beta}--\eqref{f-beta-0} yields
\begin{align*}
u^{-\alpha}\mathcal L_\alpha(u^\alpha w\eta)\leq &\frac{\alpha(\alpha+1)}2u^{-2}(f')^2w^2\eta+\sqrt Nw|D^2\eta|\\
&+w\left(M|q||f'|^{q-1}w^{\frac{q-1}2}+2\left|\frac{f''}{f'}\right|w^{\frac12}+4\eta^{-1}|\nabla\eta|\right)|\nabla\eta|.
\end{align*}

Set $z=u^\alpha w\eta$ and rewrite the above inequality as
\begin{align*}
  &u^{2-\alpha}(f')^{-2}w^{-2}\eta^{-1}\mathcal L_\alpha(z)\\
 \leq &\frac{\alpha(\alpha+1)}2+\sqrt N z^{-1} u^{\alpha+2}(f')^{-2}|D^2\eta|+M|q|z^{-\frac{3-q}2}u^{\frac{\alpha(3-q)+4}2}(f')^{q-3}\eta^{\frac{1-q}2}|\nabla\eta|\\
 &+2z^{-\frac12}u^{\frac{\alpha+4}2}\left|\frac{f''}{f'}\right|(f')^{-2}\eta^{-\frac12}|\nabla\eta|+4z^{-1}u^{\alpha+2}(f')^{-2}\eta^{-1}|\nabla\eta|^2.
\end{align*}
If $\max\{-(3N)^{-1},-4(3-q)^{-1}\}<\alpha <0$, \eqref{u<M1} together with the facts that $f'\geq C(N,M,p,q)$ and $|f''/f'|\leq C(N,M,p,q)$ leads to that there exists $C=C(N,M,p,q)>0$ such that
\begin{align*}
&u^{2-\alpha}(f')^{-2}w^{-2}\eta^{-1}\mathcal L_\alpha(z)\\
\leq & \frac{\alpha(\alpha+1)}2+C\left(z^{-1}|D^2\eta|+z^{-\frac{3-q}2}\eta^{\frac{1-q}2}|\nabla\eta|+z^{-\frac12}\eta^{-\frac12}|\nabla\eta|+z^{-1}\eta^{-1}|\nabla\eta|^2\right),
\end{align*}
which finishes the proof of Lemma \ref{lem:p>N+3,q<2p/(p+1)}.
\end{proof}

From Lemma \ref{lem:p>N+3,q<2p/(p+1)}, we can derive local gradient estimates for solutions.

\noindent\textbf{Proof of Theorem \ref{theorem:p>N+3,q<1}.}
Since $(1-q)/2>-1/2$, by taking $a=1/2$ in \eqref{dfi-eta} and using  properties of $\eta$, we deduce that when
\begin{equation*}
  z\geq A:=A(N,M,p,q,\alpha,R)=C_1\left(R^{-2}+R^{-\frac2{3-q}}\right)
\end{equation*}
for some $C_1=C(N,M,p,q,\alpha)>0$, it holds
\begin{equation}\label{<alpha/2}
C\left(z^{-1}|D^2\eta|+z^{-\frac{3-q}2}\eta^{\frac{1-q}2}|\nabla\eta|+z^{-\frac12}\eta^{-\frac12}|\nabla\eta|+z^{-1}\eta^{-1}|\nabla\eta|^2\right)<-\frac{\alpha(\alpha+1)}2.
\end{equation}
According to Lemma \ref{lem:p>N+3,q<2p/(p+1)}, it follows from \eqref{<alpha/2} that
\begin{equation*}
\mathcal L_\alpha(z)<0\quad \text{in}\ Q:=\left\{x\in B_{R'};\,z(x)> A\right\}.
\end{equation*}
Noting that $\eta\leq 1$ and $|\nabla u|=|f'||\nabla v|=|f'|w^{1/2}$, we obtain $|\nabla u|\geq \left(|f'|^2u^{-\alpha}A\right)^{1/2}\geq C$ in $Q$ for some $C=C(N,M,p,q,R)>0$. Utilizing the regularity theory on elliptic equation \eqref{eq1}, we know $u$ is in fact smooth on $Q$. Moreover, when $q<1$,
\begin{equation*}
   M|q||f'|^{q-1}w^{\frac{q-2}2}|\nabla v| =M|q||\nabla u|^{q-1}
\end{equation*}
in \eqref{defin-H} is bounded on $Q$. Hence $z\in C^2(Q)$ and $\mathcal H_\alpha\in L^{\infty}(Q)$. It follows from Lemma \ref{lem:maxi-pric} that
\begin{equation*}
  z\leq C_1\left(R^{-2}+R^{-\frac2{3-q}}\right)\quad\text{in}\ B_{R'}.
\end{equation*}
From $z=u^\alpha |\nabla v|^2\eta$ with $\eta=1$ in $B_{R/2}$, applying \eqref{m<f'<2m*} and \eqref{m-bound*} yields
\begin{align}\label{esti-z(x)}
 |\nabla u^{\frac{\alpha+2}2}|&=\frac{\alpha+2}2u^{\frac\alpha2}f'|\nabla v|\\\nonumber
 &\leq C(N,M,p,q,\alpha)m \left(R^{-1}+R^{-\frac1{3-q}}\right):=C\left(R^{-1}+R^{-\frac1{3-q}}\right)\quad\text{in}\ B_{R/2},
\end{align}
where $C=C(N,M,p,q,\alpha)>0$. Particularly, \eqref{esti-z(x)} holds at $x_0$. Following the arbitrariness of $x_0\in\Omega$, we conclude
\begin{equation*}
  |\nabla u^{\frac{\alpha+2}2}(x)|\leq C\left({\rm dist}^{-1}(x,\partial \Omega)+{\rm dist}^{-\frac1{3-q}}(x,\partial \Omega)\right),\quad x\in\Omega,
\end{equation*}
where $C=C(N,M,p,q,\alpha)>0$. Letting ${\rm dist}(x,\partial \Omega)\to \infty$ in the case $\Omega=\mathbb \R^N$, we obtain that the solution of \eqref{eq1} is a positive constant, which leads to a contradiction that $0<\int_{\mathbb \R^N} u^p\psi\leq0$ for some $0<\psi\in C^\infty_0(\mathbb \R^N)$. Therefore, we obtain the nonexistence of positive solutions and thus  Theorem \ref{theorem:p>N+3,q<1} follows.
\hfill$\Box$

\subsubsection{The supercritical range of $q$}

We next present the differential inequality of the gradient when $q$ is supercritical with respect to $p$.
\begin{lemma}\label{lem:q>2p/(p+1)}
Assume $u=-f(v)$ is a positive solution of \eqref{eq1} in $\Omega$ and $f$ defined in \eqref{f1}. Let $(p,q)\in G_2$. Then
\begin{align}\label{p>,q>}\nonumber
  \mathcal L(w\eta)\leq& -\frac{M^2}{2N}w^q\eta-\frac{1}{2N}u^{2p}\eta+C(N,p,q)M^{-\frac{4p}{(p+1)q-2p}}\eta\\
  &+C(N,M,q)\left(w\eta^{-1}|\nabla\eta|^2+w^{\frac{q+1}2}|\nabla\eta|+w|D^2\eta|\right)
\end{align}
holds in $\{x\in B_{R'};\,w(x)>0\}$.
\end{lemma}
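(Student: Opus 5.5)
For $G_2$ the auxiliary function is $f(s)=-s$ from \eqref{f1}, so $v=u$, $f'=-1$, $f''=0$ and $w=|\nabla u|^2$. I will start from Lemma \ref{lem:L(wueta)} with $\alpha=0$, $\gamma=1$; this is the same as \eqref{alpha=0} with $\gamma=1$. Every term containing $f''$ disappears. Since $\alpha=0$, the prefactor $u^{1-\alpha}$ cancels the $u$ in front of each term. What remains is
\begin{equation*}
\mathcal L(w\eta)\leq -2|D^2u|^2\eta+2pu^{p-1}w\eta-2\langle\nabla w,\nabla\eta\rangle+|q|Mw^{\frac{q+1}2}|\nabla\eta|+\sqrt N w|D^2\eta|.
\end{equation*}

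Next I handle the Hessian and the cut-off terms. As in \eqref{nablaw.nablaeta}, $|\nabla w|\leq 2|D^2u|w^{1/2}$, so
\begin{equation*}
2|\langle\nabla w,\nabla\eta\rangle|\leq |D^2u|^2\eta+4w\eta^{-1}|\nabla\eta|^2 .
\end{equation*}
This leaves $-|D^2u|^2\eta\leq-\frac1N(\Delta u)^2\eta=-\frac1N(u^p+M w^{q/2})^2\eta$. Expanding the square and using $u>0$, I get
\begin{equation*}
-|D^2u|^2\eta\leq -\frac1N u^{2p}\eta-\frac{M^2}{N}w^q\eta-\frac{2M}{N}u^pw^{q/2}\eta,
\end{equation*}
and I drop the last (nonpositive) term. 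The $\eta$-terms $w\eta^{-1}|\nabla\eta|^2$, $w^{(q+1)/2}|\nabla\eta|$ and $w|D^2\eta|$ are then exactly the error terms in \eqref{p>,q>}, with constant $C(N,M,q)$.

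The only real step is to absorb $2pu^{p-1}w\eta$ into half of the two good terms $\frac{M^2}{N}w^q\eta$ and $\frac1Nu^{2p}\eta$. Since $(p,q)\in G_2$ gives $q>2p/(p+1)>1$, I apply Young with exponents $q$ and $q'=q/(q-1)$:
\begin{equation*}
2pu^{p-1}w\leq \frac{M^2}{2N}w^q+C(N,p,q)M^{-\frac{2}{q-1}}u^{\frac{(p-1)q}{q-1}}.
\end{equation*}
The supercriticality gives $(p-1)q/(q-1)<2p$, because this is equivalent to $q(p+1)>2p$. So a second Young inequality with exponents $r=2p(q-1)/((p-1)q)>1$ and $r'$ yields
\begin{equation*}
C(N,p,q)M^{-\frac{2}{q-1}}u^{\frac{(p-1)q}{q-1}}\leq \frac1{2N}u^{2p}+C(N,p,q)M^{-\frac{2}{q-1}r'}.
\end{equation*}
Here $r'=r/(r-1)=2p(q-1)/((p+1)q-2p)$, so $\frac{2}{q-1}r'=\frac{4p}{(p+1)q-2p}$, which is the exponent of $M$ claimed in \eqref{p>,q>}. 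Collecting the terms gives the stated inequality.

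I expect no real obstacle. The one point to check carefully is this exponent bookkeeping in the double Young step, which is precisely where $q>2p/(p+1)$ enters. The condition $p\geq (N+3)/(N-1)$ is not needed for this lemma; only $q>1$ and supercriticality are used.
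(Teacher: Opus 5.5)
Your proposal is correct and follows the paper's proof. It uses the same specialization of Lemma \ref{lem:L(wueta)} with $f(s)=-s$, $\alpha=0$, $\gamma=1$, the same absorption of $\langle\nabla w,\nabla\eta\rangle$ into half of $|D^2u|^2\eta$, and the same bound $-|D^2u|^2\le -\frac1N(u^p+Mw^{q/2})^2$. The only difference is that the paper does the two Young steps in the opposite order: first $2pu^{p-1}w\le \frac1{2N}u^{2p}+C(N,p)w^{2p/(p+1)}$, then $w^{2p/(p+1)}$ is absorbed into $\frac{M^2}{2N}w^q$ plus a constant. This gives the same power $M^{-4p/((p+1)q-2p)}$.

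One small correction to your closing remark: your second Young step needs the power $(p-1)q/(q-1)$ of $u$ to be nonnegative, i.e.\ $p\ge 1$, and the paper's split needs the same. So $p\ge1$ is genuinely used, even though the full threshold $p\ge (N+3)/(N-1)$ is not.
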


\begin{proof}
Taking $\alpha=0$, $\gamma=1$, and $f(s)=-s$ in Lemma \ref{lem:L(wueta)}, it follows from  $f'=-1$ and $f''=0$ that
\begin{equation}\label{L-f=I}
  \mathcal L(w\eta)\leq -2|D^2v|^2\eta+2pu^{p-1}w\eta-2\langle\nabla w,\nabla\eta\rangle+Mqw^{\frac{q+1}2}|\nabla\eta|+\sqrt Nw|D^2\eta|.
\end{equation}
By Cauchy-Schwarz's and Young's inequalities, we know that
\begin{equation}\label{nablaw-nablaeta-2}
-2\langle\nabla w,\nabla\eta\rangle\leq 2|\nabla w||\nabla\eta|\leq 4|D^2v||\nabla v||\nabla\eta|\leq |D^2v|^2\eta+ 4w\eta^{-1}|\nabla \eta|^2.
\end{equation}
In view of \eqref{1/N} and \eqref{v-w}, we have
\begin{equation}\label{D2v-crit}
-|D^2v|^2\eta\leq -\frac1N\left(u^p+Mw^{\frac q2}\right)^2\eta\leq -\frac1Nu^{2p}\eta-\frac{M^2}{N}w^q\eta.
\end{equation}
Using Young's inequality and $2p/(p+1)<q$, we obtain
\begin{align}\label{u2p-M}\nonumber
   2pu^{p-1}w\eta&\leq \frac{1}{2N}u^{2p}\eta+C(N,p)w^{\frac{2p}{p+1}}\eta\\
   &\leq \frac{1}{2N}u^{2p}\eta+\frac{M^2}{2N}w^q\eta+C(N,p,q)M^{-\frac{4p}{(p+1)q-2p}}\eta.
\end{align}
Substituting \eqref{nablaw-nablaeta-2}--\eqref{u2p-M} into \eqref{L-f=I} leads to
\begin{align*}
  \mathcal L(w\eta)\leq& -\frac{M^2}{2N}w^q\eta-\frac{1}{2N}u^{2p}\eta+C(N,p,q)M^{-\frac{4p}{(p+1)q-2p}}\eta\\
  &+C(N,M,q)\left(w\eta^{-1}|\nabla\eta|^2+w^{\frac{q+1}2}|\nabla\eta|+w|D^2\eta|\right).
\end{align*}
The proof of Lemma \ref{lem:q>2p/(p+1)} is complete.
\end{proof}

\noindent\textbf{Proof of Theorem \ref{them:p>,q>}.}
Setting $z=w\eta$, inequality \eqref{p>,q>} can be rewritten as
\begin{align*}
  \mathcal L(z)\leq & z^q\eta^{1-q}\bigg[-\frac{M^2}{2N}+C(N,M,q)\Big(z^{-(q-1)}\eta^{q-3}|\nabla\eta|^2+z^{-\frac{q-1}2}\eta^{\frac {q-3}{2}}|\nabla\eta|\\
  &+z^{-(q-1)}\eta^{q-2}|D^2\eta|\Big)\bigg]-\frac{1}{2N}u^{2p}\eta+C(N,p,q)M^{-\frac{4p}{(p+1)q-2p}}\eta.
\end{align*}
Noting that $q>1$, we choose $a=\max\{0,2-q\}$ in \eqref{dfi-eta}, and thus
\begin{align}\label{u>M-q>}\nonumber
  \mathcal L(z)\leq & z^q\eta^{1-q}\bigg[-\frac{M^2}{2N}+C(N,M,q)\left(z^{-(q-1)}R^{-2}+z^{-\frac{q-1}2}R^{-1}\right)\bigg]\\
  &-\frac{1}{2N}u^{2p}\eta+C(N,p,q)M^{-\frac{4p}{(p+1)q-2p}}\eta.
\end{align}
Under the assumption \eqref{u>M2} for $u$, we obtain
\begin{equation*}
  \frac1{2N}u^{2p}>C(N,p,q)M^{-\frac{4p}{(p+1)q-2p}}.
\end{equation*}
Combining \eqref{u>M-q>}, we have
\begin{equation*}
  \mathcal L(z)<0\quad \text{in}\ \bigg\{x\in B_{R'};\ z(x)>C(N,M,q)R^{-\frac{2}{q-1}}\bigg\}.
\end{equation*}
Using similar arguments as the proof of Theorem \ref{theorem:p>N+3,q<1}, we derive the gradient estimates and nonexistence of solutions for \eqref{eq1} in $\mathbb \R^N$.
\hfill$\Box$

\subsection{The case $p<(N+3)/(N-1)$}

We first establish the differential inequality based on Lemma \ref{lem:e1}.
\begin{lemma}\label{lem:ieqp<}
Assume $u=-f(v)$ is a positive solution of \eqref{eq1} in $\Omega$ and $f$ defined by \eqref{f1}.  Then there exist $\gamma_0(N)>0$ and $C=C(N,p,q,\gamma)>0$ such that for any $\gamma\geq \gamma_0(N)$,
\begin{align}\label{q<,gamma>}\nonumber
  &w^{1-\gamma}\mathcal L(w^\gamma\eta)\\\nonumber
  \leq &2\gamma M\left(q-\frac{N+1}{N-1}\right)u^{q-1}w^{\frac{q+2}2}\eta+2\gamma\left[p-\frac{N+3}{N-1}+O\left(\gamma^{-\frac12}\right)\right]u^{p-1}w\eta\\
  &-w^2\eta-\frac{\gamma M^2}{2(N-1)}u^{2q-2}w^q\eta+2w^{\frac32}|\nabla\eta|+C\left(\eta^{-3}|\nabla\eta|^4+\eta^{-1}|D^2\eta|^2\right)
\end{align}
holds in $\{x\in B_{R'};\,w(x)>0\}$.
\end{lemma}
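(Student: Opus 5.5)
The plan is to specialize Lemma \ref{lem:e1} to the auxiliary function $f(s)=-e^s$ from \eqref{f1}, which is the choice used for $G_3$ and $G_5$, and then absorb the leftover zero-order terms with a weighted AM--GM inequality. With this $f$ we have $u=e^v$, $f'=f''=-u$, and so
\begin{equation*}
\frac{f''}{f'}=1,\quad \Big(\frac{f''}{f'}\Big)'=0,\quad \frac{f''u}{(f')^2}=-1,\quad |f'|^{q-2}f''=-u^{q-1},\quad |f'|^{2q-2}=u^{2q-2},\quad \frac{(-f)^{2p}}{(f')^2}=u^{2p-2}.
\end{equation*}
Substituting these into the inequality of Lemma \ref{lem:e1}, term by term, gives the following.

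\emph{The $v_{11}^2$ term.} Its coefficient $\gamma\big(-2(\gamma-1)+2(1-N+3\gamma^{1/2})/(N-1)\big)$ is negative once $\gamma$ is large depending on $N$, so this term is dropped.

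\emph{The $u^{p-1}w$ term.} It becomes $2\gamma\big(p-\tfrac{N+1}{N-1}\big)u^{p-1}w\eta$.

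\emph{The gradient-source term.} This produces $+2\gamma M\big(q-\tfrac{N+1}{N-1}\big)u^{q-1}w^{\frac{q+2}2}\eta$, which is exactly the first term of \eqref{q<,gamma>}.

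\emph{The $M^2$ term.} It equals $-\frac{\gamma(1-2\gamma^{-1/2})}{N-1}M^2u^{2q-2}w^q\eta$, and this is at most $-\frac{\gamma M^2}{2(N-1)}u^{2q-2}w^q\eta$ as soon as $\gamma\geq16$.

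\emph{The cut-off terms.} The term $2|f''/f'|w^{3/2}|\nabla\eta|=2w^{3/2}|\nabla\eta|$ and the $C(\eta^{-3}|\nabla\eta|^4+\eta^{-1}|D^2\eta|^2)$ term carry over unchanged.

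What remains is the pair of negative terms
\begin{equation*}
\Big(3-\tfrac{2\gamma(1-\gamma^{-1/2})}{N-1}\Big)w^2\eta-\tfrac{2\gamma(1-\gamma^{-1/2})}{N-1}u^{2p-2}\eta .
\end{equation*}
The key step is to use them both to produce the required $-w^2\eta$ and to upgrade $\tfrac{N+1}{N-1}$ to $\tfrac{N+3}{N-1}$ in the $u^{p-1}w$ coefficient. Set $B=\tfrac{2\gamma(1-\gamma^{-1/2})}{N-1}$ and $A=B-4$. We keep $-w^2\eta$ in reserve and write the remainder as $-(Aw^2+Bu^{2p-2})\eta$. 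By AM--GM,
\begin{equation*}
Aw^2+Bu^{2p-2}\geq 2\sqrt{AB}\,u^{p-1}w .
\end{equation*}
Since $2\sqrt{AB}=\tfrac{4\gamma}{N-1}\big(1+O(\gamma^{-1/2})\big)$, this contributes $-2\gamma\big(\tfrac{2}{N-1}+O(\gamma^{-1/2})\big)u^{p-1}w\eta$. Adding it to the coefficient above gives $2\gamma\big[p-\tfrac{N+3}{N-1}+O(\gamma^{-1/2})\big]u^{p-1}w\eta$, which is the second term of \eqref{q<,gamma>}.

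I expect the main obstacle to be purely bookkeeping. One has to check that the error in $2\sqrt{AB}$ is genuinely $O(\gamma^{-1/2})$ uniformly, and that $A>0$. Both hold for $\gamma\geq\gamma_0(N)$ with $\gamma_0(N)$ large, chosen also so that $\gamma_0\geq16$ and the $v_{11}^2$ coefficient is negative. No sign assumption on $p$ or $q$ is needed, since each term has been handled with its exact sign.
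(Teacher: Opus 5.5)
Your proposal is correct and matches the paper's proof essentially step for step. You specialize Lemma \ref{lem:e1} to $f(s)=-e^s$, split $3-B=-1-(B-4)$ to reserve $-w^2\eta$, and apply AM--GM to $(B-4)w^2+Bu^{2p-2}$ to raise $\frac{N+1}{N-1}$ to $\frac{N+3}{N-1}$ up to $O(\gamma^{-1/2})$. Your remark that $\gamma\geq 16$ turns $1-2\gamma^{-1/2}$ into $\frac12$ in the $M^2$ term spells out a step the paper leaves implicit; it is already covered there by the choice $\gamma_0\geq 16(N-1)^2$.
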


\begin{proof}
We choose
\begin{equation*}
  f(s)=-e^s,
\end{equation*}
and then $f$ maps $(-\infty,\infty)$ to $(-\infty,0)$. Thus, $f',f''<0$ and
\begin{equation}\label{f=es}
  \frac{f''u}{(f')^2}=-1,\quad \frac{f''}{f'}=1,\quad  \left(\frac{f''}{f'}\right)'=0.
\end{equation}
Owing to Lemma \ref{lem:e1}, we follow from \eqref{f=es} that
\begin{align}\label{u^(2p-2)young}\nonumber
   &w^{1-\gamma}\mathcal L(w^{\gamma}\eta)\\\nonumber
  \leq &\gamma \left(-2(\gamma-1)+\frac{2(1-N+3\gamma^{1/2})}{N-1}\right)v^2_{11}\eta-2\gamma M\left(q-\frac{N+1}{N-1}\right)|f'|^{q-2}f''w^{\frac{q+2}2}\eta\\\nonumber
  &+2\gamma\left(p-\frac{N+1}{N-1}\right)u^{p-1}w\eta+\left(-\frac{2\gamma(1-\gamma^{-1/2})}{N-1}+3\right)w^2\eta-\frac{2\gamma(1-\gamma^{-1/2})}{N-1}u^{2p-2}\eta\\
  &-\frac{\gamma (1-2\gamma^{-1/2})}{N-1}M^2|f'|^{2q-2}w^q\eta+2w^{\frac32}|\nabla\eta|+C\left(\eta^{-3}|\nabla\eta|^4+\eta^{-1}|D^2\eta|^2\right)\\\nonumber
  =&\gamma \left(-2(\gamma-1)+\frac{2(1-N+3\gamma^{1/2})}{N-1}\right)v^2_{11}\eta-2\gamma M\left(q-\frac{N+1}{N-1}\right)|f'|^{q-2}f''w^{\frac{q+2}2}\eta-w^2\eta\\\nonumber
  &+2\gamma\left(p-\frac{N+1}{N-1}\right)u^{p-1}w\eta-\frac{2\gamma\left(1-\gamma^{-1/2}-2\gamma^{-1}(N-1)\right)}{N-1}w^2\eta-\frac{2\gamma(1-\gamma^{-1/2})}{N-1}u^{2p-2}\eta\\\nonumber
  &-\frac{\gamma (1-2\gamma^{-1/2})}{N-1}M^2|f'|^{2q-2}w^q\eta+2w^{\frac32}|\nabla\eta|+C\left(\eta^{-3}|\nabla\eta|^4+\eta^{-1}|D^2\eta|^2\right),
\end{align}
where $C=C(N,p,q,\gamma)>0$. By Young's inequality, we see that for $\gamma>\gamma_1(N):=16(N-1)^2$,
\begin{align*}
&-\frac{2\gamma}{N-1}\left[\left(1-\gamma^{-\frac12}-2\gamma^{-1}(N-1)\right)w^2+(1-\gamma^{-\frac12})u^{2p-2}\right]\eta\\
\leq&-\frac{4\gamma}{N-1}\left(1-\gamma^{-\frac12}-2\gamma^{-1}(N-1)\right)^{\frac12}(1-\gamma^{-\frac12})^{\frac12}u^{p-1}w\eta.
\end{align*}
Together with \eqref{u^(2p-2)young}, it leads to
\begin{align*}
  &w^{1-\gamma}\mathcal L(w^\gamma\eta)\\
  \leq &\gamma \left(-2(\gamma-1)+\frac{2(1-N+3\gamma^{1/2})}{N-1}\right)v^2_{11}\eta-2\gamma M\left(q-\frac{N+1}{N-1}\right)|f'|^{q-2}f''w^{\frac{q+2}2}\eta-w^2\eta\\\nonumber
  &+2\gamma\left[p-\frac{N+1}{N-1}-\frac{2}{N-1}\left(1-\gamma^{-\frac12}-2\gamma^{-1}(N-1)\right)^{\frac12}\left(1-\gamma^{-\frac12}\right)^{\frac12}\right]u^{p-1}w\eta\\\nonumber
  &-\frac{\gamma M^2(1-2\gamma^{-1/2})}{N-1}|f'|^{2q-2}w^q\eta+2w^{\frac32}|\nabla\eta|+C\left(\eta^{-3}|\nabla\eta|^4+\eta^{-1}|D^2\eta|^2\right),
\end{align*}
where $C=C(N,p,q,\gamma)>0$. Note that
\begin{equation*}
  \left(1-\gamma^{-\frac12}-2\gamma^{-1}(N-1)\right)^{\frac12}\left(1-\gamma^{-\frac12}\right)^{\frac12}=1-\gamma^{-\frac12}+o\left(\gamma^{-\frac12}\right)\quad\text{as}\ \gamma\to\infty.
\end{equation*}
Hence, there exists $\gamma_0(N)\geq \gamma_1(N)$ large enough, such that for any $\gamma\geq \gamma_0(N)$,
\begin{equation*}
  p-\frac{N+1}{N-1}-\frac{2}{N-1}\left(1-\gamma^{-\frac12}-2\gamma^{-1}(N-1)\right)^{\frac12}\left(1-\gamma^{-\frac12}\right)^{\frac12}=p-\frac{N+3}{N-1}+O\left(\gamma^{-\frac12}\right).
\end{equation*}
In addition, the coefficient of $v_{11}^2\eta$ is negative for $\gamma\geq \gamma_0(N)$. This completes the proof of Lemma \ref{lem:ieqp<}.
\end{proof}

\subsubsection{Proof of Theorem \ref{them:p,q<n+3,n+1}} Since $(p,q)\in G_3$,  there exists $\gamma_1(N,p)>0$ such that
\begin{equation*}
p-\frac{N+3}{N-1}+O\left(\gamma^{-\frac12}\right)<0
\end{equation*}
for $\gamma\geq \gamma_1$. From now on, we fix
\begin{equation*}
  \gamma=\gamma_1(N,p),
\end{equation*}
and then coefficients of $u^{q-1}w^{(q+2)/2}\eta$ and $u^{p-1}w\eta$ in \eqref{q<,gamma>} are negative. Furthermore, it follows from Lemma \ref{lem:ieqp<} that
\begin{align}\label{-w^gamma+1}
  \mathcal L(w^{\gamma}\eta)\leq -w^{\gamma+1}\eta+2w^{\gamma+\frac12}|\nabla\eta|+Cw^{\gamma-1}\left(\eta^{-3}|\nabla\eta|^4+\eta^{-1}|D^2\eta|^2\right),
\end{align}
where $C=C(N,p,q)>0$. Now, we eliminate terms containing $|\nabla\eta|$ and $|D^2\eta|$. From Young's inequality with exponents $2(\gamma+1)/(2\gamma+1)$ and $2(\gamma+1)$, we find
\begin{equation*}
  2w^{\gamma+\frac12}|\nabla\eta|\leq \frac14w^{\gamma+1}\eta+ C(N,p)\left(\eta^{-\frac{2\gamma+1}{2(\gamma+1)}}|\nabla\eta|\right)^{2(\gamma+1)}.
\end{equation*}
By the same trick with exponents $(\gamma+1)/(\gamma-1)$ and $(\gamma+1)/2$, we have
\begin{equation*}
  Cw^{\gamma-1}\left(\eta^{-3}|\nabla\eta|^4+\eta^{-1}|D^2\eta|^2\right)\leq \frac14w^{\gamma+1}\eta+C\left(\eta^{-\frac{2\gamma+1}{2(\gamma+1)}}|\nabla\eta|\right)^{2(\gamma+1)}+C\left(\eta^{-\frac{\gamma}{\gamma+1}}|D^2\eta|\right)^{\gamma+1},
\end{equation*}
where $C=C(N,p,q)>0$. Setting $z=w^\gamma\eta$ and using properties of $\eta$ in \eqref{dfi-eta} with $a=(2\gamma+1)/(2\gamma+2)<1$, we have
\begin{align*}
  \mathcal L(z)&\leq-\frac12w^{\gamma+1}\eta+ CR^{-2(\gamma+1)}\\
  &=-z^{\frac{\gamma+1}{\gamma}}\eta^{1-\frac{\gamma+1}{\gamma}}\left(\frac12-Cz^{-\frac{\gamma+1}{\gamma}}\eta^{\frac{\gamma+1}{\gamma}-1}R^{-2(\gamma+1)}\right)\\
  &\leq -z^{\frac{\gamma+1}{\gamma}}\eta^{1-\frac{\gamma+1}{\gamma}}\left(\frac12-Cz^{-\frac{\gamma+1}{\gamma}}R^{-2(\gamma+1)}\right).
\end{align*}
There exists $C_1=C(N,p,q)>0$ such that when $z\geq C_1R^{-2\gamma}$,
\begin{equation*}
 Cz^{-\frac{\gamma+1}{\gamma}}R^{-2(\gamma+1)}<\frac12.
\end{equation*}
Therefore, it follows that
\begin{equation*}
  \mathcal L(z)<0\quad\text{in}\ \{x\in B_{R'};\,z(x)>C_1R^{-2\gamma}\}.
\end{equation*}
Essentially as in the proof of Theorem \ref{theorem:p>N+3,q<1}, we have $z\leq C_1 R^{-2\gamma}$ in $B_{R'}$. From $z=w^{\gamma}\eta$ and the definition of $\eta$, we have
\begin{equation*}
  \frac{|\nabla u|}{u}=\frac{|\nabla u|}{|f'|}=|\nabla v|=w^{\frac12}\leq CR^{-1}\quad\text{in}\ B_{R/2},
\end{equation*}
where $C=C(N,p,q)>0$. In particular, the estimates still holds for $x_0$. From the arbitrariness of $x_0\in\Omega$, we obtain the estimates for $x\in\Omega$. Assume that $u$ is a positive solution to \eqref{eq1} in $\mathbb \R^N$, we can deduce $0<\int_{\mathbb \R^N} u^p\psi\leq0$ for some $0<\psi\in C^\infty_0(\mathbb \R^N)$, which is a contradiction. Hence, we complete the proof.
\hfill$\Box$

\subsubsection{Proof of Theorem \ref{them:p<=0,q>}}

Taking $\alpha=0$, $\gamma=1$ and $f(s)=-s$ in Lemma \ref{lem:L(wueta)}, we follow from
\begin{equation*}
  f'=-1,\quad f''=0
\end{equation*}
that
\begin{align*}
  \mathcal L(w\eta)\leq -2|D^2v|^2\eta +2p u^{p-1}w\eta-2\langle\nabla w,\nabla\eta\rangle +qMw^{\frac{q+1}2}|\nabla \eta|+\sqrt Nw|D^2\eta|.
\end{align*}
According to \eqref{exp-D2v}, \eqref{nablaw.nablaeta} and $p\leq 0$,  we have
\begin{align*}
  \mathcal L(w\eta)\leq& -\frac1N\left(u^p+Mw^{\frac{q}2}\right)^2\eta+4w\eta^{-1}|\nabla\eta|^2+Mqw^{\frac{q+1}2}|\nabla\eta|+\sqrt Nw|D^2\eta|\\
  \leq &-\frac{M^2}Nw^q \eta+4w\eta^{-1}|\nabla\eta|^2+qMw^{\frac{q+1}2}|\nabla\eta|+\sqrt Nw|D^2\eta|.
\end{align*}
Setting $z=w\eta$, we write the above inequality as
\begin{align*}
  &\mathcal L(z)\\
  \leq& z^q\eta^{1-q}\bigg[-\frac{M^2}{N}+\Big(4z^{-(q-1)}\eta^{q-3}|\nabla\eta|^2+qMz^{-\frac{q-1}2}\eta^{\frac {q-3}{2}}|\nabla\eta|
  +\sqrt Nz^{-(q-1)}\eta^{q-2}|D^2\eta|\Big)\bigg].
\end{align*}
We choose $a=\max \{0,2-q\}$ in \eqref{dfi-eta}, and thus
\begin{equation}\label{L(z):q<0}
  \mathcal L(z)\leq z^q\eta^{1-q}\bigg[-\frac{M^2}N+C(N,M,q)\left(z^{-(q-1)}R^{-2}+z^{-\frac{q-1}2}R^{-1}\right)\bigg],
\end{equation}
which implies that there exists $C=C(N,M,q)>0$ such that
\begin{equation*}
  \mathcal L(z)<0\quad \text{in}\ \Big\{x\in B_{R'};\,z(x)>CR^{-\frac2{q-1}}\Big\}.
\end{equation*}
Using similar arguments as the proof of Theorem \ref{theorem:p>N+3,q<1}, and combining with $f'=-1$, we derive
\begin{equation*}
  |\nabla u(x)|=w^{\frac12}(x)\leq C(N,M,q){\rm dist}^{-\frac1{q-1}}(x,\partial\Omega),\quad x\in\Omega,
\end{equation*}
and hence the solution of \eqref{eq1} in $\mathbb R^N$ is a positive constant, which contradicts that $0<\int_{\mathbb \R^N} u^p\psi\leq0$ for some $0<\psi\in C^\infty_0(\mathbb \R^N)$. Therefore, we derive the nonexistence of positive solutions and complete the proof of Theorem \ref{them:p<=0,q>}.
\hfill$\Box$

\subsubsection{Proof of Theorem \ref{them:0<p<,q>}}

Observing  $q\geq (N+1)/(N-1)$ for $(p,q)\in G_5$, we have
\begin{equation*}
0\leq 2\gamma M\left(q-\frac{N+1}{N-1}\right)u^{q-1}w^{\frac{q+2}{2}}\eta\leq 2\gamma M qu^{q-1}w^{\frac{q+2}2}\eta.
\end{equation*}
Clearly, the estimates of $2\gamma M qu^{q-1}w^{(q+2)/2}\eta$ varies from the range of $q$. For $(N+1)/(N-1)\leq q<2$, it has $1<(q+2)/2<2$. Using Young's inequality with conjugate exponents $2/(2-q)$ and $2/q$, we get
\begin{align}\label{1<q<2}\nonumber
&2\gamma qMu^{q-1}w^{\frac{q+2}{2}}\eta\\\nonumber
\leq&\frac{(2-q)\gamma}{2}\Bigg[\gamma^{-\frac{(2-q)}4} \left(u^{p-1}w\right)^{\frac{2-q}2} \Bigg]^{2/(2-q)}\eta+\frac{q\gamma}{2}\Bigg[2\gamma^{\frac{2-q}4}Mq\left(u^{p-1}\right)^{\frac{q-2}2}u^{q-1}w^{q}\Bigg]^{2/q}\eta\\
:=&\frac{(2-q)}{2}\gamma^{\frac12} u^{p-1}w\eta+C(q,\gamma)M^{\frac2q}u^{\frac{(p+1)q-2p}{q}}w^2\eta.
\end{align}
For $q=2$, we have
\begin{equation}\label{q=2}
2\gamma Mqu^{q-1}w^{\frac{q+2}{2}}\eta=4\gamma Muw^2\eta.
\end{equation}
For $q>2$, it is easy to see that $1<(q+2)/2<q$. By Young's inequality with conjugate exponents $2(q-1)/(q-2)$ and $2(q-1)/q$, we find that
\begin{align}\label{q>2}\nonumber
 &2\gamma Mqu^{q-1}w^{\frac{q+2}{2}}\eta\\\nonumber
\leq& \frac{(q-2)\gamma}{2(q-1)} u\left[\gamma ^{-\frac{q-2}{4(q-1)}}\left(u^{p-2}w\right)^{\frac{q-2}{2(q-1)}}\right]^{2(q-1)/(q-2)}\eta\\\nonumber
&+\frac{q\gamma}{2(q-1)} u\left[2\gamma ^{\frac{q-2}{4(q-1)}}Mqu^{q-2}\left(u^{p-2}\right)^{\frac{-(q-2)}{2(q-1)}}w^{\frac{q^2}{2(q-1)}}\right]^{2(q-1)/q}\eta\\
:=&\frac{q-2}{2(q-1)}\gamma^{\frac12}u^{p-1}w\eta+C(q,\gamma)M^{\frac{2(q-1)}{q}}u^{2q-3+\frac{2p-pq}q} w^{q}\eta.
\end{align}

Now we return to estimate $\mathcal L(w^\gamma\eta)$. In the case of $(N+1)/(N-1)\leq q\leq 2$, substituting \eqref{1<q<2} or \eqref{q=2} into \eqref{q<,gamma>}, we obtain
\begin{align}\label{q<2-ineq}\nonumber
  \mathcal L(w^\gamma\eta)\leq & 2\gamma\left[p-\frac{N+3}{N-1}+O\left(\gamma^{-\frac12}\right)\right]u^{p-1}w^\gamma\eta-\left(\frac12-CM^{\frac2q}u^{\frac{(p+1)q-2p}{q}}\right)w^{\gamma+1}\eta\\
  &-\frac12 w^{\gamma+1}\eta+2w^{\gamma+\frac12}|\nabla\eta|+Cw^{\gamma-1}\left(\eta^{-3}|\nabla\eta|^4+\eta^{-1}|D^2\eta|^2\right),
\end{align}
where $C=C(N,p,q,\gamma)>0$. For $q>2$, it follows from \eqref{q>2} and \eqref{q<,gamma>} that
\begin{align}\label{q>2-ineq}\nonumber
\mathcal L(w^{\gamma}\eta)\leq&2\gamma\left[p-\frac{N+3}{N-1}+O\left(\gamma^{-\frac12}\right)\right]u^{p-1}w^\gamma\eta\\
&-\gamma\left(\frac{1}{2(N-1)}-CM^{-\frac2q}u^{\frac{2p-(p+1)q}{q}}\right)M^2u^{2q-2}w^{\gamma-1+q}\eta\\\nonumber
 &-\frac1{2}w^{\gamma+1}\eta+2w^{\gamma+\frac12}|\nabla\eta|+Cw^{\gamma-1}\left(\eta^{-3}|\nabla\eta|^4+\eta^{-1}|D^2\eta|^2\right),
\end{align}
where $C=C(N,p,q,\gamma)>0$. Since $p<(N+3)/(N-1)$, there exists $\gamma_1(N,p)>0$ such that for $\gamma\geq \gamma_1(N,p)$,
\begin{equation*}
  p-\frac{N+3}{N-1}+O\left(\gamma^{-\frac12}\right)<0.
\end{equation*}
From now on, we set
\begin{equation*}
  \gamma=\gamma_1(N,p).
\end{equation*}
Under the assumption of $u$ in Theorem \ref{them:0<p<,q>}, and the fact that $2p/(p+1)<(N+1)/(N-1)$, we can obtain
\begin{equation*}
  \frac12-C(N,p,q)M^{\frac2q}u^{\frac{(p+1)q-2p}{q}}\geq 0\quad\text{for}\ \frac{N+1}{N-1}\leq q\leq 2,
\end{equation*}
and
\begin{equation*}
\frac{1}{2(N-1)}-C(N,p,q)M^{-\frac2q}u^{\frac{2p-(p+1)q}{q}}\geq 0\quad \text{for}\ q>2.
\end{equation*}
It follows from \eqref{q<2-ineq} and \eqref{q>2-ineq} that
\begin{equation*}
  \mathcal L(w^\gamma\eta)\leq  -\frac12w^{\gamma+1}\eta+2w^{\gamma+\frac12}|\nabla\eta|+Cw^{\gamma-1}\left(\eta^{-3}|\nabla\eta|^4+\eta^{-1}|D^2\eta|^2\right)
\end{equation*}
with $C=C(N,p,q)>0$, which is similar to \eqref{-w^gamma+1}. Hence, the rest of the proof follows from previous arguments and  we thus complete the proof.
\hfill$\Box$

\section{Universal estimates via Liouville-type theorem}\label{sec:uni-est}
In this section, we shall establish estimates for solutions. The following lemma provides gradient estimates for bounded positive solutions when $q$ is critical with respect to $p$.

\begin{proposition}\label{pro:q=criti}
Let $(p,q)\in G_6$. Assume $u$ is a positive solution of \eqref{eq1} in $\Omega$ which satisfies $u\leq m$ for some constant $m>0$. Then there exists a constant $C=C(N,p)>0$ such that for any
\begin{equation*}
  M\geq M_0:=2^{-\frac{p-1}{p+1}}(6N)^{\frac{p}{p+1}}(N+2)^{\frac{p-1}{p+1}}(p-1)^{\frac{p-1}{p+1}}p^{-\frac{p}{p+1}}(p+1),
\end{equation*}
it holds that
\begin{equation*}
  |\nabla u|\leq C(N,p)mR^{-1}\quad\text{in}\ B_{R/2}.
\end{equation*}
\end{proposition}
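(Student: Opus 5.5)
Following the scheme of Lemma \ref{lem:p>N+3,q<2p/(p+1)} and the proof of Theorem \ref{theorem:p>N+3,q<1}, I take $\gamma=1$ and $\alpha<0$ in Lemma \ref{lem:L(wueta)}, with the auxiliary function $f(s)=m(s+1)^\beta-2m$ from \eqref{f1}. Here $m$ is the given upper bound of $u$ (rather than $\max_{\overline B_{R'}}u$). Then $f$ maps $[0,2^{1/\beta}-1)$ onto $[-m,0)$, and $f',f''>0$. Moreover $m\le f'\le 2\beta m$, $f''u/(f')^2\le(\beta-1)/\beta$, and $(f''/f')'=-(\beta-1)/(s+1)^2$. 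As in \eqref{alpha<0}, I bound $-|D^2v|^2\eta$ from below by $-\frac1N(\Delta v)^2\eta$ and expand using \eqref{v-w}, and I absorb $2u\langle\nabla w,\nabla\eta\rangle$ via \eqref{nablaw.nablaeta}. This gives the good terms
\[
-\frac1N\frac{u^{2p}}{(f')^2}\eta,\qquad -\frac{M^2}{N}(f')^{2q-2}w^q\eta,\qquad -\frac1N\Big(\frac{f''}{f'}\Big)^2w^2\eta+2\Big(\frac{f''}{f'}\Big)'w^2\eta,\qquad \alpha(\alpha+1)(f')^2u^{-2}w^2\eta .
\]
The bad terms are $6p\,u^{p-1}w\eta$, the term $-2M(q-\frac{N+1}{N})|f'|^{q-2}f''w^{(q+2)/2}\eta$, and $M|\alpha||f'|^qu^{-1}w^{(q+2)/2}\eta$.

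The key point is that $q=2p/(p+1)\in(1,2)$ is exactly the exponent for which the three-term Young inequality
\[
u^{p-1}w\;\le\;\varepsilon_1\,\frac{u^{2p}}{(f')^2}+\varepsilon_2\,(f')^{2q-2}w^q
\]
is homogeneous. The exponents of $u$, $w$ and $f'$ all match, since $u^{2p}$ and $w^q=w^{2p/(p+1)}$ interpolate to $u^{p-1}w$ with weights $\frac{p-1}{2p}$ and $\frac{p+1}{2p}$, and the powers of $f'$ cancel: $-2\cdot\frac{p-1}{2p}+(2q-2)\frac{p+1}{2p}=0$. Hence $6p\,u^{p-1}w\eta$ can be absorbed by $\frac1{2N}u^{2p}(f')^{-2}\eta+\frac{M^2}{4N}(f')^{2q-2}w^q\eta$ provided
\[
6p\le c(N,p)\,M^{\frac{2}{q}\cdot\frac{p+1}{2p}}=c(N,p)\,M^{\frac{p+1}{p}} .
\]
I expect this to be precisely where the threshold $M\ge M_0$ comes from: the explicit constant $M_0$ with its factors $6N$, $(p-1)$, $p$, $(p+1)$ is what the weighted AM–GM produces. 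No assumption relating $u$ and $M$ is needed, unlike in $G_1$ and $G_2$. The $w^{(q+2)/2}$ terms are treated as in \eqref{Y-w_q+2} and \eqref{Malpha}. The part with $f''$ is absorbed by $\frac{M^2}{4N}w^q$ plus $C_2(f''/f')^2w^2$. Choosing $\beta=1+1/C_2$ makes $(f''/f')'+C_2(f''/f')^2=0$ by \eqref{f-beta-0}. The $\alpha$-part is absorbed by $w^q$ plus $N\alpha^2(f')^2u^{-2}w^2$, which is controlled by $\alpha(\alpha+1)/2$ once $-(3N)^{-1}<\alpha<0$, as in \eqref{-5N-1<alpha}.

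These steps yield
\[
u^{2-\alpha}(f')^{-2}w^{-2}\eta^{-1}\mathcal L_\alpha(z)\le \frac{\alpha(\alpha+1)}{2}+(\text{cut-off terms}),\qquad z=u^\alpha w\eta .
\]
In the cut-off terms, the factors $u^{\alpha+2}(f')^{-2}$, $u^{(\alpha(3-q)+4)/2}(f')^{q-3}$, and so on are bounded using $u\le m$, $f'\sim m$ and $\alpha>-4/(3-q)$. I must track the powers of $m$ so that the final estimate is linear in $m$. Scaling suggests that the natural quantity is $u^{\alpha}(f')^2w\,m^{-\alpha-2}$, which is dimensionless in $m$.

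From here I apply Lemma \ref{lem:maxi-pric} on the superlevel set $\{z>A\}$, as in the proof of Theorem \ref{theorem:p>N+3,q<1}. On this set $|\nabla u|$ is bounded below, so $\mathcal H_\alpha$ is bounded and $z$ is smooth. This gives $z\le A$. Converting back through $|\nabla u|=f'w^{1/2}$ gives the bound in $B_{R/2}$.

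I expect the main obstacle to be obtaining the clean form $|\nabla u|\le C(N,p)mR^{-1}$, with $C$ independent of $M$ and no $R^{-1/(3-q)}$ term. The term $M|q||f'|^{q-1}w^{(q+1)/2}|\nabla\eta|$ scales differently in $R$. My first attempt will be to absorb it into the large good term $\frac{M^2}{4N}(f')^{2q-2}w^q\eta$, keeping part of it in reserve, via Young's inequality; the remainder is then a multiple of $w\eta^{-1}|\nabla\eta|^2$ with an $M$-independent constant. If that fails, I will instead normalize by scaling, using the invariance $u_\lambda$ at $q=2p/(p+1)$, to reduce to $m=1$ and $R=1$ and then rescale.
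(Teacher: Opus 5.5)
Your route is sound and gives $|\nabla u|\le C(N,p)mR^{-1}$, but it differs from the paper's. One claim needs correcting: the threshold your argument produces is not $M_0$.

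\textbf{Comparison of routes.} The paper takes $\alpha=0$, $\gamma=1$, $z=w\eta$ and $\beta=(N+2)/2$, so that $(f''/f')'+\frac2N(f''/f')^2=0$. The leftover $-\frac1N(f''/f')^2w^2\eta\le -C_2w^2\eta$ is then the coercive term. Since $q>1$ and $f''>0$, it keeps $-2M(q-1)(f')^{q-2}f''w^{(q+2)/2}\eta$ as a good term. That term, rather than a reserved part of $w^q$, absorbs the cut-off term $Mm^{q-1}w^{(q+1)/2}|\nabla\eta|$ with an $M$-independent threshold. For $6pu^{p-1}w$ the paper does not use a two-term Young inequality. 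It writes $6pu^{p-1}w-\frac1{4N}\bigl(\frac{u^p}{\beta m}+Mm^{q-1}w^{q/2}\bigr)^2$ as $\Phi(w)$ times a positive factor and maximizes $\Phi$ explicitly. Your homogeneity observation is exactly what makes $\Phi(w_0)$ a multiple of $u^p/m$.

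You instead recycle the $G_1$ machinery: the weight $u^\alpha$ supplies the coercive term $\frac{\alpha(\alpha+1)}2u^{-2}(f')^2w^2$, and a reserve of $\frac{M^2}N(f')^{2q-2}w^q$ absorbs the $M$-cut-off term. That first fix is the right one. After it, all remaining cut-off terms give a threshold $A=C(N,p)m^{\alpha}R^{-2}$. Then $w\le u^{-\alpha}A\le CR^{-2}$, so $|\nabla u|\le CmR^{-1}$, and no rescaling is needed. The weight buys nothing here, though. It adds the bad term $M|\alpha|(q-1)|f'|^qu^{-1}w^{(q+2)/2}$ and needs $\min_{\overline B_{R'}}u>0$ to bound $\mathcal H_\alpha$.

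\textbf{The threshold.} Your AM--GM step does not produce $M_0$ ``precisely''. The factor $\bigl(\frac{N+2}2\bigr)^{\frac{p-1}{p+1}}$ in $M_0$ comes from the paper's $\beta$ via $f'\le 2\beta m$, and the paper also exploits the cross term of the square. Your step, with exact cancellation of $f'$ and no cross term, yields a different constant. Since the proposition is claimed for every $M\ge M_0$, you must check that your threshold is $\le M_0$.

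With the split you wrote (namely $\frac1{2N}$ of the $u^{2p}$-term, $\frac{M^2}{4N}$ of the $w^q$-term, coefficient $6p$), it is not. For $N=2$, $p=1.1$ the optimal AM--GM threshold is about $7.1$, while $M_0\approx 6.8$.

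The fix is to use all of $\frac1Nu^{2p}(f')^{-2}\eta$, since nothing else needs it, and a fraction $1-\delta$ of the $w^q$-term. The other three absorptions ($f''$-term, $\alpha$-term, cut-off term) can take arbitrarily small fractions. The cost is a larger $C(N,p)$, $\beta$ closer to $1$, and smaller $|\alpha|$. Your threshold then becomes $(1-\delta)^{-1/2}(6pN)^{\frac p{p+1}}\bigl(\frac{p-1}{2p}\bigr)^{\frac{p-1}{2(p+1)}}\bigl(\frac{p+1}{2p}\bigr)^{1/2}$. $M_0$ exceeds this by the factor $(1-\delta)^{1/2}\,2^{\frac1{p+1}}(N+2)^{\frac{p-1}{p+1}}(p-1)^{\frac{p-1}{2(p+1)}}p^{-\frac p{p+1}}(p+1)^{1/2}$. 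The product after $(1-\delta)^{1/2}$ is at least about $1.9$; it is about $2$ near $p=1$ and tends to $N+2$ as $p\to\infty$. So the stated result follows.

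A small slip: $\frac2q\cdot\frac{p+1}{2p}\neq\frac{p+1}p$; the correct exponent is $2\cdot\frac{p+1}{2p}=\frac{p+1}{p}$.
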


\begin{proof}
We select $\alpha=0$ and $\gamma=1$ in \eqref{ineq-alpha-gamma}, then
\begin{align*}
\mathcal L(w\eta)\leq& -2|D^2v|^2\eta+\left(2p+\frac{2f'' u}{(f')^2}\right)u^{p-1}w\eta+2\left(\frac{f''}{f'}\right)'w^2\eta-2(q-1)M|f'|^{q-2}f''w^{\frac{q+2}2}\eta\\
&-2\langle\nabla w,\nabla\eta\rangle+ qM|f'|^{q-1}w^{\frac{q+1}2}|\nabla\eta|+2\left|\frac{f''}{f'}\right|w^{\frac32}|\nabla\eta|+\sqrt Nw|D^2\eta|.
\end{align*}
Through similar arguments as  \eqref{nablaw-nablaeta-2} and \eqref{exp-D2v}, we have
\begin{align*}
-2|D^2v|^2\eta-2\langle\nabla w,\nabla\eta\rangle\leq-\frac1N|\Delta v|^2\eta+4w\eta^{-1}|\nabla\eta|^2,
\end{align*}
and
\begin{align*}
&-\frac1N|\Delta v|^2\eta\\
=&-\frac1N\left(\frac{(-f)^p}{f'}+M\frac{|f'|^q}{f'}w^{\frac{q}2}\right)^2\eta-\frac1N\bigg[\left(\frac{f''}{f'}\right)^2w^2-2\frac{(-f)^pf''}{(f')^2}w-2M|f'|^{q-2}f''w^{\frac{q+2}{2}}\bigg]\eta.
\end{align*}
It follows from Young's inequality that
\begin{equation*}
\frac{2M}N|f'|^{q-2}f''w^{\frac{q+2}2}\eta\leq \frac{M^2}{4N}|f'|^{2q-2}w^q\eta+\frac4N\left(\frac{f''}{f'}\right)^2w^2\eta.
\end{equation*}
From inequalities above, we derive
\begin{align}\label{Lw-4}\nonumber
\mathcal L(w\eta)\leq&2\left(p+\frac{N+1}N\frac{f''u}{(f')^2}\right)u^{p-1}w\eta+2\left[\left(\frac{f''}{f'}\right)'+\frac{2}{N}\left(\frac{f''}{f'}\right)^2\right]w^2\eta-\frac{1}{N}\left(\frac{f''}{f'}\right)^2w^2\eta\\
&-2M(q-1)(f')^{q-2}f''w^{\frac{q+2}2}\eta-\frac{1}{N}\left(\frac{(-f)^{p}}{f'}+\frac{M}2\frac{|f'|^q}{f'}w^{\frac q2}\right)^2\eta\\\nonumber
&+4w\eta^{-1}|\nabla \eta|^2+qM|f'|^{q-1}w^{\frac{q+1}2}|\nabla \eta|+2\left|\frac{f''}{f'}\right|w^{\frac32}|\nabla\eta|+\sqrt N w|D^2\eta|.
\end{align}

For given $m>0$, choosing the function $f$ defined in \eqref{f1} with
\begin{equation*}\label{beta=N+2/2}
\beta=\frac{N+2}2,
\end{equation*}
we know $f',f''>0$ and
\begin{equation*}
\left(\frac{f''}{f'}\right)'+\frac{2}{N}\left(\frac{f''}{f'}\right)^2=0.
\end{equation*}
Furthermore, $f$ maps $[0,2^{1/\beta}-1)$ into $[-m,0)$ and
\begin{equation*}
\frac{f''u}{(f')^2}\leq \frac{\beta-1}{\beta}<1,\quad (f')^{q-2}f''\geq C_1m^{q-1},\quad C_2\leq\left|\frac{f''}{f'}\right|\leq C_3,
\end{equation*}
and
\begin{equation*}
  m\leq f'\leq 2\beta m,
\end{equation*}
where $C_1=C(N,q)>0$ and $C_i=C(N)>0$ for $i\in\{2,3\}$.
Then it follows from \eqref{Lw-4} that
\begin{align}\label{Lw-M0}\nonumber
  \mathcal L(w\eta)\leq& 6pu^{p-1}w\eta-C_2w^2\eta-C_1Mm^{q-1}w^{\frac{q+2}2}\eta-\frac{1}{4N}\left(\frac{u^{p}}{\beta m}+Mm^{q-1}w^{\frac q2}\right)^2\eta\\
  &+4w\eta^{-1}|\nabla \eta|^2+C_4Mm^{q-1}w^{\frac{q+1}2}|\nabla \eta|+C_3w^{\frac32}|\nabla\eta|+\sqrt N w|D^2\eta|,
\end{align}
where $C_4=C(N,q)>0$.

We now determine the sign of $\mathcal L(w\eta)$ using the strategy from \cite{Veron-MathAnn-20}. Since $q=2p/(p+1)$, we have
\begin{align*}
  &6pu^{p-1}w-\frac{1}{4N}\left(\frac{u^{p}}{\beta m}+Mm^{q-1}w^{\frac{q}2}\right)^2\\
:=&\Phi(w)\bigg[\sqrt6p^{\frac12}u^{\frac{p-1}2}w^{\frac12}+(4N)^{-\frac12}\left(\frac{u^{p}}{\beta m}+Mm^{q-1}w^{\frac{q}2}\right)\bigg],
  \end{align*}
where
\begin{equation*}
  \Phi(w)=\sqrt6p^{\frac12}u^{\frac{p-1}2}w^{\frac12}-(4N)^{-\frac12}\left(\frac{u^{p}}{\beta m}+Mm^{\frac{p-1}{p+1}}w^{\frac{p}{p+1}}\right).
\end{equation*}
By a direct calculation, we know that $\Phi$ achieves its maximum at
\begin{equation*}
  w_0=\left(\sqrt{6N}(p+1)p^{-\frac12}M^{-1}m^{-\frac{p-1}{p+1}}u^{\frac{p-1}2}\right)^{2(p+1)/(p-1)},
\end{equation*}
and hence
\begin{align*}
  \Phi(w)\leq &\Phi(w_0)\\
 =&\left(6^{\frac{p}{p-1}}N^{\frac{p+1}{2(p-1)}}(p-1)p^{-\frac{p}{p-1}}(p+1)^{\frac{p+1}{p-1}}M^{-\frac{p+1}{p-1}}-N^{-\frac12}\beta^{-1}\right)\frac{u^{p}}{2m}.
\end{align*}
If we select
\begin{equation*}
  M\geq M_0:=2^{-\frac{p-1}{p+1}}(6N)^{\frac{p}{p+1}}(N+2)^{\frac{p-1}{p+1}}(p-1)^{\frac{p-1}{p+1}}p^{-\frac{p}{p+1}}(p+1),
\end{equation*}
then $\Phi (w)\leq 0$. It follows from \eqref{Lw-M0} that
\begin{align*}
\mathcal L(w\eta)&\leq -C_2w^2\eta-C_1Mm^{\frac{p-1}{p+1}}w^{\frac{2p+1}{p+1}}\eta+4w\eta^{-1}|\nabla \eta|^2\\
 &+C_4Mm^{\frac{p-1}{p+1}}w^{\frac{3p+1}{2(p+1)}}|\nabla \eta|+C_3w^{\frac32}|\nabla\eta|+\sqrt N w|D^2\eta|.
\end{align*}
Letting $z=w\eta$, the above inequality implies
\begin{align*}
  \mathcal L(z)\leq& -z^2\eta^{-1}\left(C_2-C_3z^{-\frac12}\eta^{-\frac12}|\nabla\eta|-4z^{-1}\eta^{-1}|\nabla \eta|^2-\sqrt N z^{-1}|D^2\eta|\right)\\
  &-Mm^{\frac{p-1}{p+1}}z^{\frac{q+2}2}\eta^{-\frac{q}2}\left(C_1-C_4z^{-\frac12}\eta^{-\frac12}|\nabla\eta|\right).
\end{align*}
Taking $a=1/2$ in \eqref{dfi-eta}, there exists $C=C(N,p)>0$ such that if $z\geq C(N,p)R^{-2}$, it holds
\begin{equation*}
4z^{-1}\eta^{-1}|\nabla \eta|^2+(C_3+C_4)z^{-\frac12}\eta^{-\frac12}|\nabla\eta|+\sqrt N z^{-1}|D^2\eta|< C_5,
\end{equation*}
where $C_5:=\min\{C_1,C_2\}$. Hence,
\begin{equation*}
  \mathcal L(z)<0\quad\text{in}\ \left\{x\in B_{R'};\,z(x)\geq C(N,p)R^{-2}\right\}.
\end{equation*}
It follows from Lemma \ref{lem:maxi-pric} that
\begin{equation*}
  z\leq C(N,p)R^{-2}\quad\text{in}\ B_{R'}.
\end{equation*}
Finally, using $z=|\nabla v|^2\eta$, $|\nabla u|=f'|\nabla v|$, and the definition of $\eta$, we have
\begin{equation*}
|\nabla u|\leq C(N,p)m R^{-1}\quad\text{in}\ B_{R/2}.
\end{equation*}
Thus, we finish the proof.
\end{proof}

From the local gradient estimates for positive bounded solutions, letting $R\to\infty$, we can obtain the following result.

\begin{corollary}\label{corol-u-bound}
Let $(p,q)\in G_6$. Then for any $M\geq M_0$, where $M_0$ is defined in \eqref{defi-M0}, \eqref{eq1} possesses no positive bounded solution in $\mathbb R^N$.
\end{corollary}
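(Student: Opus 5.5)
We argue by contradiction using Proposition \ref{pro:q=criti}. Suppose $u$ is a positive solution of \eqref{eq1} in $\mathbb R^N$ with $(p,q)\in G_6$ and $M\geq M_0$, and assume $u\leq m$ in $\mathbb R^N$ for some constant $m>0$. For an arbitrary $x\in\mathbb R^N$ and any $R>0$, we have $B_R(x)\subset\mathbb R^N=\Omega$. The proposition is stated with $R={\rm dist}(x_0,\partial\Omega)$, but its proof only uses that $B_R(x_0)\subset\Omega$ and that $u\leq m$ there. Indeed, the auxiliary function $f$ in \eqref{f1} is built with $m\geq\max_{\overline B_{R'}}u$, and the construction works equally well with any upper bound $m$ in place of the maximum. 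Hence the proposition applies for every $R>0$ and gives
\begin{equation*}
|\nabla u(x)|\leq C(N,p)\,m\,R^{-1}.
\end{equation*}
Since $C(N,p)$ and $m$ do not depend on $R$, letting $R\to\infty$ yields $\nabla u(x)=0$. As $x$ was arbitrary, $u\equiv c$ for some constant $c>0$.

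It remains to rule out positive constants. If $u\equiv c>0$, then $\nabla u\equiv 0$, so by \eqref{eq1} we get $0=-\Delta u=c^p+M\cdot 0=c^p>0$, which is a contradiction. Note that $q=2p/(p+1)>1$, so the gradient term is well defined and vanishes at $\nabla u=0$. Equivalently, in weak form, for any $0<\psi\in C_0^\infty(\mathbb R^N)$ we would have $0=\int\langle\nabla u,\nabla\psi\rangle=\int c^p\psi>0$, exactly as in the proof of Theorem \ref{theorem:p>N+3,q<1}.

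The only delicate point is checking that Proposition \ref{pro:q=criti} really holds for every radius $R$ with $B_R(x_0)\subset\Omega$, and not just for $R={\rm dist}(x_0,\partial\Omega)$, which is infinite when $\Omega=\mathbb R^N$. It also needs a global bound $m$ rather than the local maximum. Both follow by inspecting the proof: the cut-off $\eta$ in \eqref{dfi-eta} only requires $B_R\subset\Omega$, and all the constants depend only on $N$ and $p$, with $m$ entering linearly through $f'\leq 2\beta m$.
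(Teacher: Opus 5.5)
Your proposal is correct and follows the paper's route. You apply Proposition \ref{pro:q=criti} on balls of arbitrary radius, let $R\to\infty$ to get $\nabla u\equiv 0$, and then rule out positive constants via the equation. Your remarks that the proposition must be used with any finite $R$ such that $B_R(x_0)\subset\Omega$ (rather than $R=\mathrm{dist}(x_0,\partial\Omega)=\infty$), and with a global bound $m$, spell out what the paper's one-line ``letting $R\to\infty$'' leaves implicit.
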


We recall the useful doubling lemma due to \cite[Lemma 5.1]{Polacik-Quittner-Souplet}, which allows us to structure the rescaling procedure and derive a contradiction. The doubling property is an extension of an idea by Hu \cite{Hu-96-doubling}.
\begin{lemma}\label{lem:doub}
Let $(X, d)$ be a complete metric space with metric $d$, and let $\emptyset \neq D \subset \Sigma \subset X$ with $\Sigma$ closed. Set $\Gamma=\Sigma \backslash D$. Let $M: D \rightarrow(0, \infty)$ be bounded on compact subsets of $D$, and fix a real number $k>0$. If $y \in D$ is such that
\begin{equation*}
  M(y){\rm{dist}}(y,\Gamma)>2k,
\end{equation*}
then there exists $x\in D$ such that
\begin{equation*}
  M(x){\rm{dist}}(x,\Gamma)>2k,\quad M(x)\geq M(y),
\end{equation*}
and
\begin{equation*}
  M(z)\leq 2M(x)\ \ \text{\rm for all}\ z\in D\cap\overline{B}_{X}\left(x,kM^{-1}(x)\right).
\end{equation*}
\end{lemma}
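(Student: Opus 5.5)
The plan is the standard contradiction argument by iteration, as in \cite{Polacik-Quittner-Souplet}. Suppose the conclusion fails for the given $y$. Then every $x\in D$ with $M(x){\rm dist}(x,\Gamma)>2k$ and $M(x)\geq M(y)$ violates the third property. That is, there is a point $x'\in D\cap\overline{B}_X(x,kM^{-1}(x))$ with $M(x')>2M(x)$. Starting from $x_0=y$, which satisfies the first two conditions trivially, I will build inductively a sequence $(x_j)\subset D$ with
\begin{equation*}
  M(x_{j+1})>2M(x_j),\qquad d(x_{j+1},x_j)\leq kM^{-1}(x_j).
\end{equation*}
It follows that $M(x_j)\geq 2^jM(y)$, and strictly so for $j\geq1$, so $M(x_j)\to\infty$.

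The key step is to check that each new $x_j$ still satisfies $M(x_j){\rm dist}(x_j,\Gamma)>2k$ and $M(x_j)\geq M(y)$, so that the iteration can continue. The second condition is immediate. For the first, the triangle inequality gives
\begin{equation*}
  {\rm dist}(x_j,\Gamma)\geq {\rm dist}(y,\Gamma)-\sum_{i=0}^{j-1}kM^{-1}(x_i)>\frac{2k}{M(y)}-\frac{k}{M(y)}\sum_{i=0}^{j-1}2^{-i}=\frac{2k}{M(y)}2^{-(j-1)}\cdot\frac12\cdot 2 ,
\end{equation*}
that is, ${\rm dist}(x_j,\Gamma)>2^{1-j}k/M(y)$. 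Combining this with $M(x_j)\geq 2^jM(y)$ gives $M(x_j){\rm dist}(x_j,\Gamma)>2k$. The exponents are balanced exactly so that the geometric decay of the step sizes compensates for the doubling of $M$. This bookkeeping is the only place where care is needed.

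Finally, I will derive the contradiction. Since $d(x_{j+1},x_j)\leq k2^{-j}/M(y)$, the sequence is Cauchy. Hence it converges to some $\bar x$ in $\Sigma$, because $X$ is complete and $\Sigma$ is closed. Moreover
\begin{equation*}
  d(\bar x,y)\leq \sum_{i\geq0}kM^{-1}(x_i)<\frac{2k}{M(y)}<{\rm dist}(y,\Gamma),
\end{equation*}
where the first strict inequality uses $M(x_i)>2^iM(y)$ for $i\geq1$. Thus $\bar x\notin\Gamma$, i.e. $\bar x\in D$. The set $K=\{x_j\}_j\cup\{\bar x\}$ is therefore a compact subset of $D$ on which $M$ is unbounded, since $M(x_j)\geq2^jM(y)\to\infty$. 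This contradicts the hypothesis that $M$ is bounded on compact subsets of $D$, which proves the lemma.
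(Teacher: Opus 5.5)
Your proof is correct and is essentially the standard argument of Pol\'{a}\v{c}ik--Quittner--Souplet \cite[Lemma 5.1]{Polacik-Quittner-Souplet}, which the paper cites without reproducing. That argument also negates the conclusion, iterates to get $M(x_{j+1})>2M(x_j)$ with $d(x_{j+1},x_j)\le kM^{-1}(x_j)$, keeps each $x_j$ admissible via the geometric-series bound, and contradicts local boundedness on the compact set formed by the Cauchy sequence and its limit in $D$. One cosmetic slip: the last expression in your triangle-inequality display equals $2^{2-j}k/M(y)$ rather than $2^{1-j}k/M(y)$, but the value $2^{1-j}k/M(y)$ that you state next and actually use is the correct one.
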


\noindent\textbf{Proofs of Theorems \ref{them:uni-esti} and \ref{them:f-g}.} We set
\begin{equation*}
  M(u)=u^{\frac{p-1}2}+|\nabla u|^{\frac{p-1}{p+1}}.
\end{equation*}
The estimates \eqref{uni-esti} and \eqref{uni-f+g} are equivalent to proving
\begin{equation*}
  M(u(x))\leq C\left(\sigma+{\rm dist}^{-1}(x,\partial\Omega)\right),\quad x\in\Omega
\end{equation*}
with $C>0$ and
\begin{equation*}
  \sigma:=\left\{
\begin{aligned}
  &0, &&\text{under the assumptions of Theorem \ref{them:uni-esti}},\\
  &1, &&\text{under the assumptions of Theorem \ref{them:f-g}}.
\end{aligned}
\right.
\end{equation*}
Assume for contradiction that there exist sequences of domains $\Omega_k$, solutions $u_k$ of \eqref{eq1} or \eqref{eq-f+g} in $\Omega_k$, respectively, and points $y_k\in\Omega_k$ such that
\begin{equation*}
M_k(y_k):=M(u_k(y_k))>2k\left(\sigma+{\rm dist}^{-1}\left(y_k,\partial \Omega_k\right)\right)\geq 2k{\rm dist}^{-1}(y_k,\partial \Omega_k).
\end{equation*}

Since $p>1$, we know the function $M_k$ is continuous and hence locally bounded on $\Omega_k$ for each $k>0$. By Lemma \ref{lem:doub} with $X=\mathbb R^N$, there exist $x_k\in\Omega_k$ such that
\begin{equation}\label{M-xk}
  M_k(x_k)>2k{\rm dist}^{-1}(x_k,\partial \Omega_k),\quad M_k(x_k)\geq M_k(y_k)>2k\sigma,
\end{equation}
and
\begin{equation}\label{M<2Mk}
  M_k(x)\leq 2M_k(x_k)\quad \text{in}\ \widehat \Omega_k:=\left\{x\in\mathbb R^{N};\ {\rm dist}\left(x,x_k\right)\leq kM_k^{-1}(x_k)\right\}.
\end{equation}
Owing to the first inequality of \eqref{M-xk}, we get
\begin{equation*}
  {\rm dist}\left(x,x_k\right)<\frac12{\rm dist}\left(x_k,\partial\Omega_k\right),\quad x\in\widehat \Omega_k,
\end{equation*}
which implies $\widehat \Omega_k\subset\Omega_k$. Let
\begin{equation*}
\lambda_k=M^{-1}_k(x_k).
\end{equation*}
Now, we rescale $u_k$ by setting
\begin{equation}\label{scal}
  v_k(y)=\lambda _k^{2/(p-1)}u_k\left(x_k+\lambda_ky\right),\quad |y|\leq k.
\end{equation}
Then the function $v_k$ solves
\begin{equation*}
  -\Delta v_k=v^p_k+M|\nabla v_k|^{2p/(p+1)}\quad\text{in}\ |y|\leq k,
\end{equation*}
or
\begin{equation*}
-\Delta v_k=\lambda_k^{2p/(p-1)}f\left(\lambda_k^{-2/(p-1)}v_k\right)+\lambda_k^{2p/(p-1)}g\left(\lambda_k^{-(p+1)/(p-1)}\nabla v_k\right)\quad \text{in}\ |y|\leq k
\end{equation*}
under assumptions of Theorems \ref{them:uni-esti} or \ref{them:f-g}, respectively. Moreover, from \eqref{M<2Mk} and \eqref{scal}, we have
\begin{equation}\label{v-k-bound*}
\left(v_k^{(p-1)/2}+|\nabla v_k|^{(p-1)/(p+1)}\right)(y)\leq 2,\quad |y|\leq k,
\end{equation}
and
\begin{equation}\label{nontri}
  \left(v_k^{(p-1)/2}+|\nabla v_k|^{(p-1)/(p+1)}\right)(0)=\lambda_k M_k(x_k)=1.
\end{equation}
The assumption \eqref{assum-f} of $f$ implies that there exists a constant $C>0$ such that
\begin{equation*}
  -C\leq f(s)\leq C\left(1+s^p\right), \quad s\geq 0.
\end{equation*}
It follows from the boundedness of $v_k$ and $\lambda_k\to 0$ as $k\to\infty$ that
\begin{equation*}
  -C\lambda_k^{2p/(p-1)}\leq\lambda_k^{2p/(p-1)}f\left(\lambda_k^{-2/(p-1)}v_k\right)\leq C,\quad |y|\leq k
\end{equation*}
for $k$ large enough. Noting that $\lambda_k^{-(p+1)/(p-1)}\to\infty$ as $k\to\infty$, from the assumption \eqref{assum-g} on $g$ and the boundedness of $\nabla v_k$, we obtain when $k$ large enough,
\begin{equation*}
 0\leq \lambda_k^{2p/(p-1)}g\left(\lambda_k^{-(p+1)/(p-1)}\nabla v_k\right)\leq 2\mu|\nabla v_k|^{2p/(p+1)}\leq C(p,\mu),\quad |y|\leq k\ \text{with}\ \nabla v_k(y)\neq0,
\end{equation*}
and
\begin{equation*}
  -C\lambda_k^{2p/(p-1)}\leq \lambda_k^{2p/(p-1)}g(0)\leq C, \quad |y|\leq k\ \text{with}\ \nabla v_k(y)=0.
\end{equation*}

By using the elliptic regularity theory, we deduce that there exists $r\in (0,1)$ such that $v_k$ is bounded in $C^{1,r}_{\rm loc}\left(\mathbb R^N\right)$.  Then there exists a $v\geq 0$, up to a subsequence, such that $v_k$ converges $v$ in $C^1_{\rm loc}\left(\mathbb R^N\right)$ which satisfies $-\Delta v\geq 0$ in $\mathbb R^N$. Therefore, by the strong maximum principle, $v$ is nontrivial and $v>0$.

We now proceed with the proof of Theorems \ref{them:uni-esti} and \ref{them:f-g} individually. Under assumptions in Theorem \ref{them:uni-esti}, $v$ is a nontrivial nonnegative bounded solution to \eqref{eq1} in $\mathbb R^N$ with $M\geq M_0$. This contradicts Corollary \ref{corol-u-bound} and hence proves Theorem \ref{them:uni-esti} (i). As for Theorem \ref{them:uni-esti} (ii), it is a direct consequence of Proposition \ref{pro:q=criti}.

For Theorem \ref{them:f-g}, since $v>0$ is nontrivial,  using the assumption \eqref{assum-f} on $f$, we find that for a fixed $y\in\mathbb R^N$, $v_k(y) \geq v(y)/2>0$ for $k$ large enough, and
\begin{equation*}
   \lambda_k^{2p/(p-1)}f\left(\lambda_k^{-2/(p-1)}v_k(y)\right)=v_k^p(y)s_k^{-p}f(s_k)\to lv^p(y)\quad \text{\rm as}\ k\to \infty,
\end{equation*}
where $s_k:=\lambda_k^{-2/(p-1)}v_k(y)\to \infty$ as $k\to\infty$. For a given $y\in \mathbb R^N$ such that $\nabla v(y)\neq 0$, there exists a subsequence $v_k$, satisfying  $\nabla v_k(y)\neq 0$ and $\nabla v_k(y)\to \nabla v(y)$ as $k\to \infty$. By the assumption \eqref{assum-g} on $g$, we have
\begin{equation*}
\lambda_k^{2p/(p-1)}g\left(\lambda_k^{-(p+1)/(p-1)}\nabla v_k(y)\right)=\bar s_k^{-2p/(p+1)}g\left(\bar s_k\nabla v_k(y)\right)\to \mu|\nabla v(y)|^{2p/(p+1)}\quad \text{as}\ k\to\infty,
\end{equation*}
where $\bar s_k:=\lambda_k^{-(p+1)/(p-1)}\to\infty$ as $k\to\infty$. On the other hand, if $\nabla v(y)=0$ and there exists a sequence of $v_k$ such that $\nabla v_k(y)\neq 0$ with $\nabla v_k(y)\to 0$ as $k\to \infty$, the arguments above remains valid. Consequently, it follows from \eqref{v-k-bound*} that $v$ is a positive nontrivial bounded solution to
\begin{equation*}
  -\Delta v=lv^p+\mu|\nabla v|^{\frac{2p}{p+1}}\quad \text{in}\ \mathbb R^N.
\end{equation*}
Setting $w=l^{1/(p-1)}v$, then $w$ is the positive nontrivial bounded solution of
\begin{equation*}
  -\Delta w=w^p+\mu l^{-\frac{1}{p+1}}|\nabla w|^{\frac{2p}{p+1}}\quad \text{in}\ \mathbb R^N.
\end{equation*}
When $1<p<(N+3)/(N-1)$, it contradicts to Theorem \ref{them:p,q<n+3,n+1}. In addition, when $p\geq (N+3)/(N-1)$ with $\mu l^{-1/(p+1)}\geq M_0$, it contradicts to Corollary \ref{corol-u-bound}. This proves Theorem \ref{them:f-g}.
\hfill$\Box$

\vskip 3mm
\noindent{\bf Conflict of interest.} {No potential conflict of interest was reported by the authors.}

\vskip 3mm
\noindent{\bf Data availability.} {No data was used for the research described in the article.}

\vskip 3mm
\noindent{\bf Acknowledgments.} {The research of W.G. Liang is supported by the  Fundamental  Research  Funds  for  the Central Universities (No. xzy022025046). The research of Z.C. Zhang is partially supported by the National Natural
Science Foundation of China (Nos. 12271423 and 12671248).

\end{document}